\newcolumntype{C}{>{\centering\arraybackslash}X} 
\newtheorem*{rep@theorem}{\rep@title}
\newcommand{\newreptheorem}[2]{%
\newenvironment{rep#1}[1]{%
 \def\rep@title{#2 \ref{##1}}%
 \begin{rep@theorem}}%
 {\end{rep@theorem}}}
\newtheorem*{rep@cor}{\rep@title}
\newcommand{\newrepcor}[2]{%
\newenvironment{rep#1}[1]{%
 \def\rep@title{#2 \ref{##1}}%
 \begin{rep@cor}}%
 {\end{rep@cor}}}
\newtheorem*{rep@prop}{\rep@title}
\newcommand{\newrepprop}[2]{%
\newenvironment{rep#1}[1]{%
 \def\rep@title{#2 \ref{##1}}%
 \begin{rep@prop}}%
 {\end{rep@prop}}}
\newtheorem{theorem}{Theorem}[section]
\numberwithin{theorem}{section}
\newenvironment{manualtheorem}[1]{%
    \manualtheoreminner
}{\endmanualtheoreminner}
\newtheorem{lemma}[theorem]{Lemma}
\newtheorem{proposition}[theorem]{Proposition}
\theoremstyle{definition}
\newtheorem*{definition*}{Definition}
\newtheorem{definition}[theorem]{Definition}
\theoremstyle{remark}
\newtheorem{remark}[theorem]{Remark}
\def\paragraph{\@startsection{paragraph}{4}%
  \z@\z@{-\fontdimen2\font}%
  {\normalfont\bfseries}}
\numberwithin{equation}{section}
\patchcmd{\subsection}{-.5em}{.5em}{}{}
\renewcommand\section{\@startsection{section}{1}%
  \z@{.7\linespacing\@plus\linespacing}{.5\linespacing}%
  {\normalfont\scshape\centering}}
\renewcommand\subsection{\@startsection{subsection}{2}%
  \z@{-.5\linespacing\@plus-.7\linespacing}{.5\linespacing}%
  {\bfseries}}
\renewcommand\subsubsection{\@startsection{subsubsection}{3}%
  \z@{-.5\linespacing\@plus-.7\linespacing}{.5\linespacing}%
  {\itshape}}
\def\l@paragraph{\@tocline{4}{0pt}{1pc}{7pc}{}}
\newcommand{\C}{\mathbb{C}}
\newcommand{\R}{\mathbb{R}}
\newcommand{\Z}{\mathbb{Z}}
\newcommand{\dsone}{\mathds{1}}
\newcommand{\Hyp}{\mathbb{H}^2}
\newcommand{\II}{I \! \! I}
\newcommand{\Lsl}{\mathfrak{sl}}
\newcommand{\diag}{\mathrm{diag}}
\newcommand{\vl}{|}
\newcommand{\T}{\mathcal{T}}
\newcommand{\Id}{\mathrm{Id}}
\newcommand{\Ree}{\mathcal{R}e}
\renewcommand{\i}{\mathbf{I}}
\newcommand{\g}{\mathbf{g}}
\newcommand{\Sg}{\Sigma}
\newcommand{\ome}{\boldsymbol{\omega}}
\newcommand{\quartic}{\mathcal Q^4\big(T^2\big)}
\newcommand{\almost}{\mathcal{J}(\R^2)}
\newcommand{\maximal}{\mathcal M(\R^2)}
\newcommand{\maximalflat}{\mathcal{M}\mathcal{F}(T^2)}
\newcommand{\dx}{\mathrm{d}x}
\newcommand{\dy}{\mathrm{d}y}
\newcommand{\du}{\mathrm{d}u}
\newcommand{\devu}{\mathrm{d}v}
\newcommand{\Xuno}{\mathbb{X}_{H_1}}
\newcommand{\Xdue}{\mathbb{X}_{H_2}}
\newcommand{\partialx}{\frac{\partial}{\partial x}}
\newcommand{\partialy}{\frac{\partial}{\partial y}}
\newcommand{\partialu}{\frac{\partial}{\partial u}}
\newcommand{\partialv}{\frac{\partial}{\partial v}}
\DeclarePairedDelimiterX{\scal}[2]{\langle}{\rangle}{#1 \mid #2}
\DeclarePairedDelimiterX{\scall}[2]{\langle}{\rangle}{#1, #2}
\DeclareMathOperator{\Imm}{Im}
\DeclareMathOperator{\Span}{Span}
\DeclareMathOperator{\Hom}{Hom}
\DeclareMathOperator{\End}{End}
\DeclareMathOperator{\SL}{\mathrm{SL}}
\DeclareMathOperator{\Ad}{Ad}
\begin{document}

\setcounter{secnumdepth}{3}
\setcounter{tocdepth}{2}

\title[Moduli space of flat maximal embeddings in pseudo-hyperbolic space]{The moduli space of flat maximal space-like embeddings in pseudo-hyperbolic space}

\author[Nicholas Rungi]{Nicholas Rungi}
\address{NR: Scuola Internazionale Superiore di Studi Avanzati (SISSA), Trieste (TS), Italy.} \email{nrungi@sissa.it} 

\author[Andrea Tamburelli]{Andrea Tamburelli}
\address{AT: Department of Mathematics, University of Pisa, Italy.} \email{andrea.tamburelli@libero.it}

\date{\today}

\begin{abstract}
We study the moduli space of flat maximal space-like embeddings in $\mathbb H^{2,2}$ from various aspects. We first describe the associated Codazzi tensors to the embedding in the general setting, and then, we introduce a family of pseudo-K\"ahler metrics on the moduli space. We show the existence of two Hamiltonian actions with associated moment maps and use them to find a geometric global Darboux frame for any symplectic form in the above family.
\end{abstract}

\maketitle

\tableofcontents

\section{Introduction}
This paper is the latest in a series that studies the pseudo-Riemannian and symplectic geometry of representations of surface groups into Lie groups of rank two (\cite{mazzoli2021parahyperkahler,rungi2021pseudo,rungi2023mathbb}). By work of Goldman (\cite{goldman1984symplectic}), character varieties of semi-simple Lie groups have a natural symplectic form. On the other hand, Labourie (\cite{labourie2017cyclic}) and Alessandrini-Collier (\cite{alessandrini2019geometry}) showed that in rank 2 many of their connected components enjoy a mapping class group invariant complex structure. The relation between the symplectic and complex geometry of character varieties appears, however, to be mysterious. The current paper has the purpose of shedding some light on the topic by introducing a pseudo-K\"ahler structure on the space of faithful representations of $\pi_1(T^2)$ into $\mathrm{SO}_0(2,3)$, up to conjugations. \\

First, given that $\mathrm{SO}_0(2,3)$ can be identified with the group of orientation-preserving isometries of $\mathbb{H}^{2,2}$, we build on the recent theory of maximal space-like surfaces developed in \cite{labourie2023quasicircles, LTW, Nie_alternating, TW} to identify the space of faithful representations of $\pi_1(T^2)$ into $\mathrm{SO}_0(2,3)$ with the space $\maximalflat$ of equivariant flat maximal embeddings of $\mathbb{R}^2$ into $\mathbb{H}^{2,2}$. Using the embedding data of such maximal surfaces, our first result gives a parameterization of $\maximalflat$ that is reminiscent of the analog parameterization of equivariant space-like maximal surfaces in $\mathbb{H}^{2,2}$ in higher genus (\cite{CTT}):

\begin{manualtheorem}A[Theorem \ref{thm:flatmaximalandquartic}] \label{thmA} The space $\maximalflat$ of equivariant flat maximal embeddings $\mathbb{R}^2 \hookrightarrow \mathbb{H}^{2,2}$ is parameterized by the complement of the zero section of the bundle $\mathcal{Q}^{4}(T^{2})$ of holomorphic quartic differentials over the Teichm\"uller space of the torus. 
\end{manualtheorem}

Next, we introduce new tensors related to maximal space-like embeddings into $\mathbb{H}^{2,2}$, which did not appear in the aforementioned papers, and use them to define a pseudo-K\"ahler structure on $\maximalflat$: 

\begin{manualtheorem}B[Theorem \ref{thm:pseudoKahler}] \label{thmB}
Let $f:[0,+\infty)\to(-\infty,0]$ be a smooth function such that:
\begin{itemize}
    \item[(i)]$f'(t)<0, \ \forall t\ge0$
    \item[(ii)] $f(0)=0$. 
\end{itemize}
Then the space $\maximalflat$ admits a $\mathrm{MCG}(T^2)$-invariant pseudo-K\"ahler structure $(\g_f,\ome_f,\mathbf{I})$ such that the complex structure $\mathbf{I}$ comes from the identification of $\maximalflat$ with the complement of the zero section in $\quartic$.\end{manualtheorem}

One of the main feature of this family of pseudo-Riemannian metrics, beside their compatibility with the natural complex structure on $\maximalflat$, is the fact that, once the function $f$ is chosen as in Theorem \ref{thmB}, they can be explicitly written in terms of a complex structure $J$ and the real part of a holomorphic quartic differential on the torus, thus potentially allowing further investigations on the geometric properties of these metrics. As an example, we study the behaviour of two natural actions on $\maximalflat$ with respect to these pseudo-K\"ahler structures. Indeed, identifying $\maximalflat$ with the complement of the zero section in $\quartic$, we see that $S^1$ acts on $\maximalflat$ by rotations along the fibers and $\mathbb{P}\mathrm{SL}(2,\R)$ acts on a pair $(J,q) \in\quartic$ by pull-back. It turns out these actions are geometric and can be used to better understand the symplectic forms $\ome_f$:

\begin{manualtheorem}C \label{thmC}
The $S^1$- and $\mathbb{P}\mathrm{SL}(2,\R)$-action on $\maximalflat$ are Hamiltonian with respect to $\ome_{f}$. 
Moreover, their Hamiltonians can be chosen as action variables in a global Darboux frame for $\ome_{f}$. 
\end{manualtheorem}

\subsection{Outline of the paper}
The paper is structured as follows: in Section \ref{subsec:background} we briefly introduce space-like maximal surfaces in $\mathbb{H}^{2,2}$ and describe their Gauss-Codazzi-Ricci equations. In Section \ref{subsec:tensors} we introduce new tensors that are closely related to holomorphic quartic differentials, and in Section \ref{subsec:torus} we analyze the equivariant flat case, showing that the associated moduli space can be parameterized as an open subset in the total space of a holomorphic bundle over the Teichm\"uller space of the torus. After introducing the proper tensors to be considered for the zero-curvature case and after studying their properties (Section \ref{sec:vector_bundle}), the main result of the paper, namely, the existence of the pseudo-K\"ahler metric, is contained in Section \ref{sec:pseudokahler}, the proof of which is also continued in Section \ref{sec:coordinatedescription}. Finally, other results regarding the properties of the pseudo-K\"ahler metric are obtained: the existence of an isometric and Hamiltonian circular action (Section \ref{sec:circleaction}), the existence of an $\SL(2,\R)$ Hamiltonian action (Section \ref{sec:SLaction}), and the existence of a geometric global Darboux frame for the symplectic form (Section \ref{sec:globalDarboux}).

\section{Maximal surfaces in \texorpdfstring{$\mathbb H^{2,2}$}{H2,2}}
In this section we recall some basic facts about maximal surfaces in the pseudo-hyperbolic space $\mathbb{H}^{2,2}$ and their geometric invariants. Most of the material covered here can be found in \cite{labourie2023quasicircles} and \cite{Nie_alternating}.

\subsection{Background} \label{subsec:background}

Let $\eta$ be a quadratic form in $\R^{5}$ of signature $(2,3)$. The pseudo-hyperbolic space is
\[
    \mathbb{H}^{2,2}=\mathbb{P}(\{x \in \R^{5} \ | \ \eta(x)<0\}) \ .
\]
The restriction of $\eta$ to the tangent space $T_{x}\mathbb{H}^{2,2}$ at each $x\in \mathbb{H}^{2,2}$ induces a pseudo-Riemannian metric $g$ of signature $(2,2)$ and of constant sectional curvature $-1$. Its topological boundary 
\[
    \partial_{\infty}\mathbb{H}^{2,2}=\mathbb{P}(\{x \in \R^{5} \ | \ \eta(x)=0\})
\]
is the boundary at infinity for the pseudo-Riemannian metric $g$ and it is endowed with a conformally flat Lorentzian structure.

\begin{definition} An embedded surface $\Sigma \subset \mathbb{H}^{2,2}$ is space-like if the restriction $g_{T}$ of $g$ to $T\Sigma$ is positive definite. We say that $g_{T}$ is the induced metric on $\Sigma$ and the surface is called \emph{complete} if its induced metric is complete.
\end{definition}
    
If $\Sigma \subset \mathbb{H}^{2,2}$ is a space-like surface, then its normal bundle $N\Sigma:=(T\Sigma)^{\perp_{g}}$ inside $T\mathbb{H}^{2,2}$ is endowed with a negative definite Riemannian metric $g_{N}$. Using the splitting $T\mathbb{H}^{2,2}=T\Sigma\perp_{g} N\Sigma$, the Levi-Civita connection on $\mathbb{H}^{2,2}$ decomposes as
\[
        \nabla=\begin{pmatrix}
            \nabla^{T} & B \\
            \II & \nabla^{N} 
        \end{pmatrix} \ ,
\]
where $\nabla^{T}$ is the Levi-Civita connection of the induced metric on $\Sigma$, $\II\in \Omega^{1}(\Sigma, \Hom(T\Sigma, N\Sigma)$ is the \emph{second fundamental form}, $B\in \Omega^{1}(\Sigma, \Hom(N\Sigma, T\Sigma))$ is the \emph{shape operator}, and $\nabla^{N}$ is compatible with the metric $g_{N}$. Since $\nabla$ is torsion-free, the second fundamental form is actually symmetric, in the sense that
\[
    \II(X,Y)=\II(Y,X) \ \ \ \ \forall X,Y\in \Gamma(T\Sigma)  .
\]
Moreover, the second fundamental form and the shape operator are related by
\[
    g_{N}(\II(X,Y), \xi) = -g_{T}(Y,B(X,\xi))
\]
for all $X, Y \in \Gamma(T\Sigma)$ and normal vector fields $\xi$. 

\begin{definition} A space-like surface $\Sigma \subset \mathbb{H}^{2,2}$ is maximal if $\trace_{g_{T}}(\II)=0$.
\end{definition}

\noindent It follows easily from the definition that a surface $\Sigma$ is maximal if and only if in a $g_{T}$-orthonormal frame $\{e_{1}, e_{2}\}$ of $T\Sigma$, we have 
\[
    \II(e_{1},e_{1})=-\II(e_{2},e_{2}) \ .
\]

\noindent The induced metric, the second fundamental form and the metric on the normal bundle of a maximal surface satisfy some fundamental equations:
\begin{enumerate}[i)]
    \item the Gauss equation
    \[
        K_{g_{T}} = -1+\frac{1}{2}\|\II\|^{2} \ ,
    \]
    where $K_{g_{T}}$ denotes the Gaussian curvature of the induced metric $g_{T}$ on $\Sigma$ and the norm of the second fundamental form is defined by the formula
    \[
        \| \II \|^{2} = -g_{N}(\II(e_{1},e_{1}), \II(e_{1},e_{1})) -g_{N}(\II(e_{2},e_{1}), \II(e_{2},e_{1}))
    \]
    for a $g_{T}$-orthonormal frame $\{e_{1}, e_{2}\}$ of $T\Sigma$;
    \item the Ricci equation
    \[
        R^{N}=\omega \otimes \varphi \ ,
    \]
    where $R^{N}$ denoted the curvature tensor of $g_{N}$, $\omega$ is the area form on $\Sigma$ for its induced metric, and $\varphi$ is the unique endomorphism of $N\Sg$ such that
    \[
        g_{T}(B(X,\xi), B(Y,\eta))-g_{T}(B(Y,\xi), B(X,\eta))=\omega(X,Y)g_{N}(\varphi(\xi),\eta)
    \]
    for all $X,Y\in \Gamma(T\Sigma)$ and $\xi,\eta\in \Gamma(N\Sigma)$.
    \item the Codazzi equation 
    \[
            d^{D}\II=0
    \]
    where $D$ is the connection induced on $\Hom(T\Sigma, N\Sigma)$ by $\nabla$ and $\nabla^{N}$ and the operator $d^{D}$ is defined by
    \[
        (d^{D}\theta)(X)Y:= D_{X}(\theta(Y))-D_{Y}(\theta(X))-\theta([X,Y])
    \] 
    for $X,Y\in \Gamma(T\Sigma)$. 
\end{enumerate}

\noindent As a consequence of the Codazzi equation, it is possible to introduce some holomorphic data on a maximal surface $\Sigma \subset \mathbb{H}^{2,2}$. Let $J$ be the complex structure on $\Sigma$ compatible with its induced metric and denote by $T^{\C}\Sigma$ and $N^{\C}\Sigma$ the complexifications of the tangent and normal bundle of $\Sigma$ with respect to this complex structure. Let $K_{\Sigma}$ be the holomorphic cotangent bundle of $\Sigma$. By \cite[Lemma 5.6]{labourie2023quasicircles}, there exists a holomorphic section $\sigma$ of $K^{2}_\Sg \otimes N^{\mathbb C}\Sigma$ such that $\II=\Ree(\sigma)$. Moreover, the Gauss equation can be rewritten in terms of $\sigma$ as
\[
        K_{g_{T}}=-1+\|\sigma\|^{2}_{h} \ , 
\]
where $h$ is the Hermitian extension of $g_{T}^{2}\otimes (-g_{N})$ on $K^{2}_\Sg\otimes N^{\C}\Sigma$.

\noindent The main holomorphic object that we are going to use in the sequel is the holomorphic quartic differential $q$ on $\Sigma$ defined out of $\sigma$ by the expression
\[
        q:=-\frac{1}{4}g_{N}(\sigma, \sigma) \ . 
\]

\subsection{Other tensors}\label{subsec:tensors}

In this section we define other tensors related to maximal surfaces in $\mathbb{H}^{2,2}$.

\begin{theorem} \label{thm:tensorsandquarticdifferentials}
Let $S$ be a smooth oriented surface endowed with a Riemannian metric $h$ and let $J$ be the (almost) complex structure defined by the conformal class of $h$. Suppose that a $(0,4)$ tensor $T$ and a $(1,3)$ tensor $U$ are related by $U=h^{-1}T$, then 
\begin{enumerate}[(i)]
    \item $T$ is totally symmetric if and only if $U(X,Y)=U(Y,X)$ is an $h$-symmetric endomorphism such that $U(X,Y)Z=U(Z,X)Y=U(Y,Z)X$ for any $X,Y,Z\in\Gamma(TS)$;
    \item $T$ is the real part of a complex quartic differential $q$ on $(S,J)$ if and only if the endomorphism $U(X,Y)$ is trace-less and $U(JX,Y)=U(X,JY)=U(X,Y)J$ for any $X,Y\in\Gamma(TS)$; 
    \item $q$ is holomorphic if and only if $\mathrm d^{\nabla} T=0$, where $\nabla$ is the Levi-Civita connection of $h$. Moreover, the quartic differential can be written as $$q=T(\cdot,\cdot,\cdot,\cdot)-iT(\cdot,\cdot,\cdot,J\cdot) \ .$$
\end{enumerate} 
\end{theorem}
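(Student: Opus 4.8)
The plan is to treat (i) and (ii) as pointwise statements in the multilinear algebra of $\big(T_pS, h_p, J_p\big) \cong \big(\R^2, \langle\cdot,\cdot\rangle, J_0\big)$, and (iii) as a first-order differential statement that one reads off in a local conformal coordinate. Throughout I fix the index-raising convention so that $U = h^{-1}T$ means
\[
    h\big(U(X,Y)Z, W\big) = T(X,Y,Z,W),
\]
i.e. $U(X,Y)$ is the endomorphism obtained by raising the fourth slot of $T$.

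For (i), I would set up a dictionary between the stated algebraic properties of $U$ and the permutation symmetries of $T$ under the defining identity above. Self-adjointness of $U(X,Y)$ with respect to $h$ is exactly symmetry of $T$ in its last two slots; the identity $U(X,Y) = U(Y,X)$ is symmetry in the first two slots; and each cyclic identity $U(X,Y)Z = U(Z,X)Y$, $U(X,Y)Z = U(Y,Z)X$ translates into invariance of $T$ under a $3$-cycle of the slots $\{1,2,3\}$. Since the transpositions $(3\,4)$, $(1\,2)$ together with a $3$-cycle on $\{1,2,3\}$ generate the full symmetric group $S_4$, the conjunction of these properties is equivalent to total symmetry of $T$, while the converse implication is immediate. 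This is the whole content of (i), and it is purely formal.

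For (ii) I would argue pointwise, using the orthonormal frame $\{e_1, e_2 = Je_1\}$ (equivalently, a conformal coordinate $z$). For the forward direction, writing $q(JX,\cdot,\cdot,\cdot) = i\,q(X,\cdot,\cdot,\cdot)$ for a $(4,0)$-form and taking real parts gives $T(JX,Y,Z,W) = -\Imm\,q(X,Y,Z,W) = T(X,Y,JZ,W)$, which is precisely $U(JX,Y) = U(X,Y)J$ (and symmetrically for the other slot); the trace $\sum_k T(X,Y,e_k,e_k)$ vanishes because $q(\cdot,\cdot,Je_1,Je_1) = -q(\cdot,\cdot,e_1,e_1)$. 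For the converse I would define $q := T(\cdot,\cdot,\cdot,\cdot) - iT(\cdot,\cdot,\cdot,J\cdot)$ as in the statement and verify, using the $J$-relations together with the total symmetry from (i) and $J^2 = -\Id$, that $q(JX,\cdot,\cdot,\cdot) = i\,q(X,\cdot,\cdot,\cdot)$, so that $q$ is of type $(4,0)$, hence a quartic differential with $\Ree(q) = T$. Conceptually this is the statement that, in the $U(1)$-decomposition of the $5$-dimensional space of symmetric $4$-tensors on $\R^2$, the tracelessness and the $J$-conditions cut out exactly the top-weight (harmonic) summand, which is the space of real parts of quartic differentials.

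For (iii) I would first note that on a surface the compatible complex structure is parallel, $\nabla J = 0$ (the metric is automatically K\"ahler, since its fundamental $2$-form is the area form and hence closed), so $J$ commutes with $\nabla$ and the reconstruction formula $q = T(\cdot,\cdot,\cdot,\cdot) - iT(\cdot,\cdot,\cdot,J\cdot)$ is compatible with differentiation. The equation $\mathrm{d}^\nabla T = 0$ says that the antisymmetrization of $\nabla T$ in its first two slots vanishes, i.e. (given that $T$ is already symmetric in the remaining slots) that $\nabla T$ is totally symmetric. In a conformal coordinate $z$, writing $q = \psi\,dz^4$ and using that the only nonvanishing Christoffel symbols of $h = e^{2\phi}|dz|^2$ are $\Gamma^z_{zz}$ and its conjugate, one computes $\nabla_{\partial_{\bar z}} q = (\partial_{\bar z}\psi)\,dz^4$, so that $q$ is holomorphic iff $\nabla_{\partial_{\bar z}} q = 0$. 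I would then identify, via $T = \Ree(q)$ and $\nabla J = 0$, the $\bar\partial$-part of $\nabla q$ with the antisymmetrized derivative $\mathrm{d}^\nabla T$, yielding $\mathrm{d}^\nabla T = 0 \iff \bar\partial q = 0 \iff q$ holomorphic. The main obstacle is precisely this last identification: matching the exterior covariant derivative $\mathrm{d}^\nabla T$ of the real tensor with the $\bar\partial$-operator on $q$, keeping careful track of the Christoffel contributions and of the placement of $J$, so that the purely symmetric part of $\nabla q$ (the $\partial$-derivative) drops out and only the holomorphicity condition survives.
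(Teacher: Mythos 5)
Your proposal is correct, and for parts (i) and (ii) it is essentially the paper's own argument: the same slot-by-slot dictionary between symmetries of $T$ and properties of $U$ (your explicit remark that $(1\,2)$, $(3\,4)$ and a $3$-cycle generate $S_4$ is actually more complete than the paper, which proves only the $h$-symmetry and leaves the rest to the reader), and the same definition $q=T(\cdot,\cdot,\cdot,\cdot)-iT(\cdot,\cdot,\cdot,J\cdot)$. One step in (ii) deserves to be written out: when you verify that $q$ is of type $(4,0)$ you must move $J$ into the \emph{fourth} slot, i.e. prove $T(\cdot,\cdot,\cdot,J\cdot)=T(\cdot,\cdot,J\cdot,\cdot)$, which does not follow from the stated $J$-relations alone. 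The paper does this by observing that an $h$-symmetric endomorphism of a $2$-plane anti-commutes with $J$ if and only if it is trace-less, so $U(X,Y)J=-JU(X,Y)$; alternatively your listed ingredients do suffice via the chain $T(X,Y,Z,JW)=T(JW,Y,Z,X)=T(W,Y,JZ,X)=T(X,Y,JZ,W)$ (total symmetry, $J$-relation, total symmetry), but either pivot must appear explicitly. In (iii) you take a genuinely different, more invariant route: the paper chooses a \emph{holomorphic normal} coordinate, so the Christoffel symbols vanish at the point, $d^\nabla$ becomes ordinary antisymmetrized differentiation, and $d^\nabla T=0$ reduces component-wise to the Cauchy--Riemann equations for the coefficient $f$ in $q=f\,dz^4$; you instead keep a general conformal coordinate and use $\nabla J=0$ to get $\nabla q=\nabla T-i(\nabla T)(\cdot,\cdot,\cdot,J\cdot)$, hence $d^\nabla T=0\iff d^\nabla q=0$, and then note that for a $(4,0)$-tensor only the $\bar\partial$-part survives in $d^\nabla q$, since $\nabla_{\bar z}q=(\partial_{\bar z}\psi)\,dz^4$ when $\Gamma^z_{z\bar z}=0$. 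Your version costs some Christoffel bookkeeping but makes the equivalence $d^\nabla T=0\iff\bar\partial q=0$ structural rather than a coordinate identity; the paper's normal-coordinate trick is the more elementary shortcut. Both are valid.
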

\begin{proof} By definition, the tensor $U$ is characterized by the property that
\[
        T(X,Y,Z,W)=h(U(X,Y)Z,W)  \ \ \ \  \forall \ X,Y,Z,W \in \Gamma(TS) \ .
\]
\begin{enumerate}[(i)]
    \item Let us prove that $U(X,Y)$ is an $h$-symmetric endomorphism:
        \begin{align*}
            h(U(X,Y)Z,W)&=T(X,Y,Z,W)  \tag{$T$ is totally symmetric} \\
                        &=T(X,Y,W,Z)  \\
                        &=h(U(X,Y)W,Z) \ .
        \end{align*}
    The proofs of the other properties are similar and left to the reader.
    \item Define $q=T(\cdot, \cdot, \cdot, \cdot)-iT(\cdot, \cdot, \cdot, J\cdot)$: it is a complex quartic differential with real part $T$ if and only if $q(J\cdot, \cdot, \cdot, \cdot)=q(\cdot, J\cdot, \cdot, \cdot)=q(\cdot, \cdot, J\cdot, \cdot)=q(\cdot, \cdot, \cdot, J\cdot)=iq(\cdot, \cdot, \cdot, \cdot).$ Since the last equality clearly holds, it is sufficient to prove that $T(J\cdot, \cdot, \cdot, \cdot)=T(\cdot, J\cdot, \cdot, \cdot)=T(\cdot, \cdot, J\cdot, \cdot)=T(\cdot, \cdot, \cdot, J\cdot)$. In terms of the tensor $U$, these conditions are equivalent to $U(JX,Y)=U(X,JY)=U(X,Y)J=-JU(X,Y)$ for all $X,Y \in \Gamma(TS)$. Notice that the last equality says that the $h$-symmetric endomorphism $U(X,Y)$ anti-commutes with $J$ and this can happen if and only if $U(X,Y)$ is trace-less. 
    \item We do this computation in a local holomorphic normal coordinate $z=x+iy$ on $S$, which exists because the metric $h$ is trivially K\"ahler. By item $(ii)$, the quartic differential $q$ can be written locally as $q=fdz^{4}$ for some smooth function $f$. We need to prove that $f$ is holomorphic if and only if the tensor $T$ is Codazzi. Now, using the symmetries of the tensor $T$ proved in part $(i)$ and $(ii)$, the Codazzi equation $0=(d^{\nabla}T)(\partial_{x},\partial_{y},\cdot,\cdot,\cdot):=(\nabla_{\partial_{x}}T)(\partial_{y},\cdot,\cdot,\cdot)-(\nabla_{\partial_{y}}T)(\partial_{x},\cdot,\cdot,\cdot)$ reduces to
    \[
        \begin{cases}
            \partial_{x}(T(\partial_{y}, \partial_{x},\partial_{x},\partial_{x})=\partial_{y}(T(\partial_{x}, \partial_{x},\partial_{x},\partial_{x}) \\
             \partial_{x}(T(\partial_{x}, \partial_{x},\partial_{x},\partial_{x})=-\partial_{y}(T(\partial_{y}, \partial_{x},\partial_{x},\partial_{x})
        \end{cases} \ .
    \]
    Since $f=q(\partial_{z}, \partial_{z}, \partial_{z}, \partial_{z})=T(\partial_{x},\partial_{x},\partial_{x},\partial_{x})-iT(\partial_{x},\partial_{x},\partial_{x},\partial_{y}))$, we see that the above equations are exactly the Cauchy-Riemann equations for $f$, hence $q$ is holomorphic.
\end{enumerate}
\end{proof}

\noindent As a consequence, if $\Sigma$ is a maximal surface in $\mathbb{H}^{2,2}$ with associated quartic differential $q$, the real part $T$ of $q$ is a $(0,4)$-tensor satisfying the properties of Theorem \ref{thm:tensorsandquarticdifferentials}. It follows from the definitions that $T$ can be expressed in terms of the second fundamental form as  
\[
    T= g_{N}(\II(\cdot, \cdot), \II(\cdot, \cdot))-g_{N}(\II(\cdot, J\cdot), \II(\cdot, J\cdot)) \ .
\]
\subsection{The case of the torus}\label{subsec:torus}

In this paper, we are interested in complete maximal embeddings $\mathbb{R}^{2} \rightarrow \mathbb{H}^{2,2}$ that are equivariant with respect to a faithful representation $\rho:\pi_{1}(T^{2}) \rightarrow \mathrm{SO}_{0}(2,3)$. In this setting the tensors defined in Section \ref{subsec:background} and \ref{subsec:tensors} simplify considerably. First, by \cite[Corollary 5.7]{labourie2023quasicircles} a complete maximal surface in $\mathbb{H}^{2,2}$ is negatively curved, so an application of Gauss-Bonnet implies that it must be flat. Moreover, the quartic differential $q$ is constant and non-zero because, otherwise, since $q=-\frac{1}{4}g_{N}(\sigma, \sigma)$, the holomorphic section $\sigma$ and hence $\II=\Ree(\sigma)$ would vanish identically, implying that the maximal surface would be totally geodesic, but in $\mathbb{H}^{2,2}$ totally geodesic space-like surfaces are isometric to $\mathbb{H}^{2}$. Complete flat maximal surfaces in $\mathbb{H}^{2,2}$ are well-known: they are all obtained as orbits of a point under the action of a Cartan subgroup of $\mathrm{SO}_{0}(2,3)$ and are known in the literature as Barbot surfaces (\cite{labourie2023quasicircles}) or horospherical surfaces (\cite{BS_maximal,Tambu_poly, TW}). In particular, their geometry is completely determined by a conformal structure and a non-zero holomorphic quartic differential:

\begin{theorem}\label{thm:flatmaximalandquartic} The space of complete equivariant flat maximal surfaces $(\Sigma, \rho)$ up to the action of $\mathrm{SO}_{0}(2,3)$, is parameterized by the bundle of non-zero holomorphic quartic differentials over the Teichm\"uller space of the torus $\T(T^{2})$.
\end{theorem}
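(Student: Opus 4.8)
The plan is to establish a bijection between the moduli space of complete equivariant flat maximal surfaces (up to $\mathrm{SO}_0(2,3)$) and the total space of the bundle $\mathcal{Q}^4(T^2)$ of nonzero holomorphic quartic differentials over $\mathcal{T}(T^2)$. The natural map goes from a maximal surface to its pair $(J,q)$, where $J$ is the complex structure of the induced metric (equivalently, the conformal class giving a point of Teichm\"uller space, once we account for the equivariance and the marking) and $q$ is the holomorphic quartic differential defined by $q=-\tfrac14 g_N(\sigma,\sigma)$. The discussion preceding the statement already shows this map lands in the right target: $q$ is holomorphic by \cite[Lemma 5.6]{labourie2023quasicircles}, it is constant and nonzero on the torus, and flatness of the induced metric means the conformal class is a genuine point of $\mathcal{T}(T^2)$. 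So the content is really injectivity and surjectivity of $(\Sigma,\rho)\mapsto(J,q)$.

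First I would treat surjectivity, which is where the explicit structure of Barbot/horospherical surfaces does the work. Given a flat conformal structure $J$ on $T^2$ and a nonzero holomorphic quartic differential $q$, I would construct the surface directly: because these surfaces arise as orbits of a point under a Cartan subgroup $A<\mathrm{SO}_0(2,3)$, the data $(J,q)$ should determine a representation $\rho\colon\pi_1(T^2)\to A$ together with an $A$-orbit, and hence a flat maximal immersion of $\mathbb{R}^2$. Concretely, on the universal cover one writes down the induced flat metric and the second fundamental form $\II$ whose holomorphic packaging is exactly $q$ (using $\II=\Ree(\sigma)$ and $q=-\tfrac14 g_N(\sigma,\sigma)$), checks that the Gauss equation $K=-1+\|\sigma\|_h^2=0$ is satisfied so that the metric is flat, and verifies Codazzi and Ricci so that the Gauss--Codazzi--Ricci system integrates to an equivariant embedding. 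The fundamental theorem of surface theory in $\mathbb{H}^{2,2}$ then yields $(\Sigma,\rho)$, unique up to the ambient isometry group, realizing $(J,q)$.

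For injectivity I would argue that two flat maximal surfaces with the same $(J,q)$ have isometric embedding data and are therefore related by an element of $\mathrm{SO}_0(2,3)$. The point is that on a flat maximal surface the full collection of invariants---the induced metric $g_T$, the second fundamental form $\II$, the normal metric $g_N$, and the normal connection $\nabla^N$---is reconstructible from $(J,q)$. The conformal class $J$ together with flatness and the Gauss equation $0=-1+\|\sigma\|_h^2$ pins down $g_T$ in its conformal class (the equation fixes the scale through $\|\sigma\|_h$), and then $\sigma$, hence $\II=\Ree(\sigma)$, is recovered from $q=-\tfrac14 g_N(\sigma,\sigma)$ up to the freedom governed by the normal bundle geometry, which the Ricci equation constrains. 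Once all embedding data coincide, uniqueness up to global isometry is again the fundamental theorem of surface theory, and modding out by $\mathrm{SO}_0(2,3)$ gives the desired bijection on moduli; finally I would note that this identification is natural enough to upgrade to an isomorphism of bundles over $\mathcal{T}(T^2)$.

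The main obstacle will be the reconstruction step underlying both directions: showing that $(J,q)$ determines the \emph{normal} data---the metric $g_N$ and especially the normal connection $\nabla^N$---and not merely the tangential second fundamental form. The subtlety is that $q=-\tfrac14 g_N(\sigma,\sigma)$ only records a contraction of $\sigma$ with itself, so one must use the maximality condition together with the Ricci equation $R^N=\omega\otimes\varphi$ to argue that the remaining normal-bundle freedom is an $\mathrm{SO}_0(2,3)$-gauge ambiguity rather than a genuine modulus. Here the special rigidity of the flat Cartan-orbit (Barbot) surfaces is essential, and I would lean on the explicit Lie-theoretic description to show that the Cartan subgroup and base point are determined by $(J,q)$ up to conjugacy, thereby closing both injectivity and surjectivity simultaneously.
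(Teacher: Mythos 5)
Your plan goes through the fundamental theorem of submanifold theory, and you correctly identify where it becomes delicate: the quartic differential $q=-\tfrac14 g_N(\sigma,\sigma)$ is only a quadratic contraction of $\sigma$, so passing from $(J,q)$ back to the full embedding data $(g_T,\II,g_N,\nabla^N)$ is not formal. But this identified obstacle is precisely where your proposal stops being a proof. For surjectivity you say the data $(J,q)$ ``should determine'' a Cartan subgroup and an orbit, and for injectivity you say you would ``lean on the explicit Lie-theoretic description''; neither step is carried out, and a priori nothing you wrote excludes a family of pairs $(\sigma,\nabla^N)$, non-congruent modulo gauge, inducing the same $q$. Closing this gap would amount to an existence-and-uniqueness theorem for the Gauss--Codazzi--Ricci system with prescribed $(J,q)$ in the flat case (the analogue of what \cite{CTT} do in higher genus), which is a substantial piece of work not reduced to anything in your sketch. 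Incidentally, your claim that the Gauss equation ``fixes the scale'' of $g_T$ has the same circularity: the equation reads $\|\sigma\|_h^2=1$, and $\|\sigma\|_h$ is not a function of $q$ alone without first knowing $g_N$ and $\sigma$.

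The paper avoids the reconstruction problem altogether by quoting the rigidity theorem of Labourie--Toulisse--Wolf (\cite[Proposition 2.27]{labourie2023quasicircles}): every complete flat maximal space-like surface in $\mathbb{H}^{2,2}$ is $\mathrm{SO}_0(2,3)$-congruent to the standard Barbot surface $\Sigma_0$. Once both surfaces in the injectivity argument are normalized to be $\Sigma_0$ itself, the question reduces to classifying biholomorphisms $\phi$ of $\C\cong\Sigma_0$ with $\phi^*(dz^4)=dz^4$, namely $\phi(z)=\zeta_4 z+b$ with $\zeta_4$ a fourth root of unity, and observing that these are induced by isometries of $\mathbb{H}^{2,2}$ preserving $\Sigma_0$ (the Cartan subgroup together with the cyclic symmetry permuting the four ideal points of $\Sigma_0$). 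Surjectivity is equally concrete: the explicit parameterization $f_0$ is conformal with $q=dz^4$, and pre-composing $f_0$ with elements of $\GL(2,\R)$ realizes every pair $(J,q)$ with $q\neq 0$. So either you too invoke this classification --- at which point the Gauss--Codazzi--Ricci machinery in your proposal becomes unnecessary --- or you must actually prove the existence and uniqueness statement your plan presupposes.
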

\begin{proof} Let $\rho:\pi_{1}(T^{2}) \rightarrow \mathrm{SO}_{0}(2,3)$ be a faithful representation and let $\Sigma$ be a $\rho$-equivariant maximal surface in $\mathbb{H}^{2,2}$. Since all complete flat maximal surfaces in $\mathbb{H}^{2,2}$ are equivalent under the action of $\mathrm{SO}_{0}(2,3)$ (\cite[Proposition 2.27]{labourie2023quasicircles}), up to conjugation, we can assume that $\Sigma$ is the standard Barbot surface $\Sigma_{0}$. An explicit parameterization of $\Sigma_{0}\subset \mathbb{H}^{2,2}$ is given by 
\[
    f_{0}(x,y)=\frac{1}{2}\left( \sqrt{2}\sinh(2y), \sqrt{2}\sinh(2x), \cosh(2x)+\cosh(2y), \cosh(2x)-\cosh(2y),0 \right) \ .
\]
Note that $\Sigma_{0}$ intersects the boundary at infinity of $\mathbb{H}^{2,2}$ in four points $[0,\pm\sqrt{2},1,1,0]$ and $[\pm\sqrt{2},0, 1,-1,0]$ connected by light-like segments. 
Since $\Sigma_{0}$ endowed with the complex structure compatible with its induced metric is biholomorphic to $\C$, the quotient $\Sigma_{0}/\rho(\pi_{1}(T^{2}))$ defines a point $J\in \T(T^{2})$. Let $q$ be the holomorphic quartic differential on the torus induced by $\Sigma_{0}$. The map that associates to $(\Sigma,\rho)$ the pair $(J,q)$ constructed in this way is a bijection. Indeed, it is easy to check that the parameterization $f_{0}$ is conformal and the associated quartic differential is $q=dz^{4}$, for $z=x+iy$, thus all pairs $(J,q)$ can be obtained by pre-composing $f_{0}$ with an element of $\mathrm{GL}(2,\mathbb{R})$. On the other hand, let $(\Sigma_{1},\rho_{1})$ and $(\Sigma_{2},\rho_{2})$ be two complete equivariant flat maximal surfaces with the same data $(J,q)$. After renormalizing the surfaces by post-composing with a global isometry of $\mathbb{H}^{2,2}$, we can assume that they coincide with the standard Barbot surface $\Sigma_{0}$. Let $f_{i}:\mathbb{C} \rightarrow \Sigma_{0}$ be conformal $\rho_{i}$-equivariant parameterizations of $\Sigma_{0}$. Then $\phi=f_{1}^{-1}\circ f_{2}$ is a biholomorphism of $\C$ such that $\phi^{*}q=q$. The only possibilities are $\phi(z)=\zeta_{4}z$ for some forth root of unity $\zeta_{4}$. However, these coordinate changes have the effect of cyclically moving the four points at infinity of $\Sigma_{0}$, which can also realized by post-composing with an isometry $A\in \mathrm{SO}_{0}(2,3)$ preserving the Barbot surface. Therefore, we can conclude that $f_{1}=A\circ f_{2}$ and $\rho_{1}$ is conjugate to $\rho_{2}$.
\end{proof}
\begin{remark}
The parameterization $f_0$ given in the previous theorem is actually contained in the pseudo-hyperbolic space $\mathbb{H}^{2,1}$, which coincides with the $3$-dimensional Anti-de Sitter space. We thus find the same surface studied by Bonsante-Seppi (\cite{bonsante2020anti}) in the case of equivariant flat immersions in $\mathbb{H}^{2,1}$. 
\end{remark}
\section{Geometry of the moduli space}
In this section we focus on the case $\Sg=T^2$, hence we are going to consider tensors associated with flat maximal space-like embeddings. For this reason, the complex structure and the second fundamental form induced on the surface, as well as the metric on the normal bundle, will no longer depend on the point but will be constant tensors defined on $\R^2$. A similar approach was first used in \cite[\S 3.2,\S 3.3]{mazzoli2021parahyperkahler}, and then in \cite[\S 3.1]{rungi2021pseudo}.

\subsection{Linear almost-complex structures}\label{sec:linearalmost}
Let $\rho_0:=\mathrm{d}x_0\wedge\mathrm{d}y_0$ be the standard area form on $\R^2$ and let us introduce $$\almost:=\{J\in\End(\R^2) \ | \ J^2=-\mathds{1}, \ \rho_0(v,Jv)>0 \ \text{for some} \ v\in\R^2\setminus\{0\}\} $$ to be the space of $\rho_0$-compatible linear-complex structures on $\R^2$. It is a real two dimensional manifold and for any $J\in\almost$, the pairing $g_J(\cdot,\cdot):=\rho_0(\cdot,J\cdot)$ gives a well-defined scalar product on $\R^2$, with respect to which $J$ is an orthogonal endomorphism. By differentiating the identity $J^2=-\mathds{1}$, it follows that $$T_J\almost=\{\dot{J}\in\End(\R^2) \ | \ J\dot J+\dot J J=0\} \ . $$ Equivalently, the space $T_J\almost$ can be identified with the trace-less and $g_J$-symmetric endomorphisms of $\R^2$. It carries a natural (almost) complex structure given by
\begin{align*}
\mathcal{I}:T_J&\almost\to T_J\almost \\ &\dot J\mapsto-J\dot J \ . 
\end{align*}
There is a natural scalar product defined on the tangent space 
$$\langle\dot J, \dot J'\rangle_J:=\frac 1{2}\tr(\dot J\dot J')$$for each $\dot J,\dot J'\in\almost$ and it is easy to check that $\mathcal{I}$ preserves $\langle\cdot,\cdot\rangle_J$. It is also possible to define a symplectic form on $T_J\almost$ by $$\Omega_J(\dot J,\dot J')=-\langle \dot J,J\dot J'\rangle_J \ .$$ The group $\SL(2,\R)$ acts by conjugation on $\almost$: for any $A\in\SL(2,\R)$ one defines $A\cdot J:=AJA^{-1}$. The same formula can be used to define an action on $T_J\almost$. \begin{lemma}\label{lem:kahleronalmost}
The triple $\big(\langle\cdot,\cdot\rangle,\mathcal I,\Omega\big)$ defines a $\SL(2,\R)$-invariant K\"ahler metric on $\almost$.
\end{lemma}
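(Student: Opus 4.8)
The plan is to verify that $(\langle\cdot,\cdot\rangle,\mathcal I,\Omega)$ is a compatible almost-Hermitian triple fiberwise, and then to observe that the two remaining integrability conditions come for free because $\almost$ is a real surface. Throughout I would use repeatedly that every $\dot J\in T_J\almost$ is trace-less and $g_J$-symmetric, and that the anticommutation relation $J\dot J=-\dot J J$ yields, upon multiplying on the right by $J$, the identity $J\dot J J=\dot J$.

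First I would check that $\mathcal I$ is a genuine almost-complex structure on $T\almost$. That $\mathcal I$ preserves $T_J\almost$ follows from $J(-J\dot J)+(-J\dot J)J=\dot J-J\dot J J=0$, and $\mathcal I^2\dot J=-J(-J\dot J)=J^2\dot J=-\dot J$, so $\mathcal I^2=-\Id$. Next I would show that $\langle\cdot,\cdot\rangle$ is a Riemannian metric: symmetry is immediate from cyclicity of the trace together with the $g_J$-symmetry of $\dot J$, while positivity follows because in a $g_J$-orthonormal frame a trace-less $g_J$-symmetric endomorphism has the form $\left(\begin{smallmatrix}a&b\\ b&-a\end{smallmatrix}\right)$, whose square has trace $2(a^2+b^2)$. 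The relation $J\dot J J=\dot J$ then gives $\tr(\mathcal I\dot J\,\mathcal I\dot J')=\tr(J\dot J J\dot J')=\tr(\dot J\dot J')$, so $\mathcal I$ is orthogonal for $\langle\cdot,\cdot\rangle$. Finally, since $\mathcal I\dot J'=-J\dot J'$, one has $\Omega(\dot J,\dot J')=-\langle\dot J,J\dot J'\rangle=\langle\dot J,\mathcal I\dot J'\rangle$; the anticommutation relation gives $\tr(\dot J J\dot J')=-\tr(\dot J' J\dot J)$, i.e. $\Omega$ is antisymmetric, and nondegeneracy is clear from $\Omega(\mathcal I\dot J',\dot J')=\langle\dot J',\dot J'\rangle>0$. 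This exhibits $(\langle\cdot,\cdot\rangle,\mathcal I,\Omega)$ as a compatible almost-Hermitian triple.

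To upgrade to a Kähler structure, the decisive observation is dimensional: $\almost$ is a two-dimensional real manifold, so the Nijenhuis tensor of any almost-complex structure vanishes identically (hence $\mathcal I$ is integrable), and any smooth $2$-form is automatically closed (hence $\mathrm d\Omega=0$, there being no nonzero $3$-forms). Combined with the pointwise compatibility established above, this yields the Kähler condition, once one notes that the three tensors depend smoothly on $J$, which is evident from their algebraic expressions.

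For $\SL(2,\R)$-invariance I would use that the induced action on $T_J\almost$ is $\dot J\mapsto A\dot J A^{-1}$: conjugation invariance of the trace gives invariance of $\langle\cdot,\cdot\rangle$; the computation $\mathcal I_{AJA^{-1}}(A\dot J A^{-1})=-(AJA^{-1})(A\dot J A^{-1})=A(\mathcal I_J\dot J)A^{-1}$ gives equivariance of $\mathcal I$; and invariance of $\Omega$ then follows formally from $\Omega=\langle\cdot,\mathcal I\cdot\rangle$. Since every step reduces to $2\times2$ linear algebra with trace-less symmetric matrices, there is no analytic obstacle; the only points requiring care are the consistent bookkeeping of the identities $J\dot J=-\dot J J$ and $J\dot J J=\dot J$, and the recognition that integrability of $\mathcal I$ and closedness of $\Omega$ are free in real dimension two, so that the genuine content of the lemma is purely the fiberwise compatibility.
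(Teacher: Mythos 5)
Your proof is correct, and all the individual computations check out: the identities $J\dot J J=\dot J$ and $\mathcal I^2=-\Id$, the positivity of $\tfrac12\tr(\dot J\dot J')$ via the normal form $\left(\begin{smallmatrix}a&b\\ b&-a\end{smallmatrix}\right)$, the compatibility $\Omega(\cdot,\cdot)=\langle\cdot,\mathcal I\cdot\rangle$ (which matches the paper's later convention $\ome_f=\g_f(\cdot,\i\cdot)$), and the equivariance computations for the conjugation action. Note, however, that the paper does not actually prove this lemma: it is stated as a known fact, with the relevant verifications implicitly deferred to the references (Trautwein's thesis, whose Lemma 4.3.2 is quoted immediately afterwards, and the analogous linear-algebra lemmas of Mazzoli--Seppi), so there is no proof in the paper to compare yours against. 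Your argument is a complete, self-contained substitute, and its key structural point --- that on a real two-dimensional manifold integrability of $\mathcal I$ (vanishing Nijenhuis tensor) and closedness of $\Omega$ (no nonzero $3$-forms) are automatic, so the entire content is the fiberwise compatibility of the triple --- is exactly the right economy. The one alternative route suggested by the paper itself would be to transport the hyperbolic K\"ahler structure through the map $j:\Hyp\to\almost$ of Lemma \ref{lem:Trautweinkahlerhyperbolic}; but since that lemma, as stated, already presupposes the K\"ahler structure on $\almost$ (it asserts $j$ is a K\"ahler isometry), your direct verification avoids a circularity that a careless use of Lemma \ref{lem:Trautweinkahlerhyperbolic} would introduce.
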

\begin{lemma}[{\cite[Lemma 4.3.2]{trautwein2018infinite}}]\label{lem:Trautweinkahlerhyperbolic}
Let $\Hyp$ be the hyperbolic plane with complex coordinate $z=x+iy$ and with K\"ahler structure $$g_{\Hyp}=\frac{\mathrm{d}x^2+\mathrm{d}y^2}{y^2}, \qquad\qquad \omega_{\Hyp}=-\frac{\mathrm{d}x\wedge\mathrm{d}y}{y^2} \ . $$ Then, there exists a unique $\SL(2,\R)$-invariant K\"ahler isometry $j:\Hyp\to\almost$ such that $j(i)=J_0=\begin{psmallmatrix}
    0 & -1 \\ 1 & 0\end{psmallmatrix}$. It is given by the formula \begin{equation}\label{eq:kahlerisometryalmost}
    j(x+iy):=\begin{pmatrix}
    \frac{x}{y} & -\frac{x^2+y^2}{y} \\ \frac 1{y} & -\frac x{y} 
    \end{pmatrix} \ . 
\end{equation}
\end{lemma}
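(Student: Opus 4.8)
The plan is to recognize both $\Hyp$ and $\almost$ as the symmetric space $\SL(2,\R)/\SO(2)$ and to identify $j$ with the tautological equivariant diffeomorphism between them; the $\SL(2,\R)$-invariance of all the structures involved will then reduce the verification that $j$ is a K\"ahler isometry to a single point.

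First I would describe the two transitive actions and their stabilizers. The Möbius action of $\SL(2,\R)$ on $\Hyp$ is transitive and a direct computation gives $\Stab(i)=\SO(2)$. On the other side, $\SL(2,\R)$ acts on $\almost$ by conjugation $A\cdot J=AJA^{-1}$; writing $J=\begin{psmallmatrix} a & b \\ c & -a\end{psmallmatrix}$, the conditions $J^2=-\mathds{1}$ and $\rho_0(v,Jv)>0$ translate into $a^2+bc=-1$ and $c>0$, and by standard linear algebra every such $J$ is conjugate to $J_0$, so the action is transitive with centralizer $\Stab(J_0)=\SO(2)$. Since the two stabilizers coincide as subgroups, the assignment $j(A\cdot i):=AJ_0A^{-1}$ is a well-defined $\SL(2,\R)$-equivariant diffeomorphism $\Hyp\to\almost$ with $j(i)=J_0$; as the action on $\Hyp$ is transitive, any equivariant map normalized by $j(i)=J_0$ must agree with it, which already yields existence and uniqueness. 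To recover the explicit formula, I would write $z=x+iy=A\cdot i$ for $A=\begin{psmallmatrix} \sqrt{y} & x/\sqrt{y} \\ 0 & 1/\sqrt{y}\end{psmallmatrix}$ and compute $AJ_0A^{-1}$, which reproduces exactly the matrix in \eqref{eq:kahlerisometryalmost}.

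It then remains to show $j$ is a K\"ahler isometry. Since $g_\Hyp$, $\omega_\Hyp$ and multiplication by $i$ are $\SL(2,\R)$-invariant, and $\langle\cdot,\cdot\rangle$, $\Omega$, $\mathcal I$ are $\SL(2,\R)$-invariant by Lemma \ref{lem:kahleronalmost}, the equivariance of $j$ together with transitivity of the action on $\Hyp$ reduces the three compatibilities to the base point $i\mapsto J_0$. There I would differentiate \eqref{eq:kahlerisometryalmost} to get $d_ij(\partial_x)=\begin{psmallmatrix} 1 & 0 \\ 0 & -1\end{psmallmatrix}$ and $d_ij(\partial_y)=\begin{psmallmatrix} 0 & -1 \\ -1 & 0\end{psmallmatrix}$, and check by short trace computations that these are orthonormal for $\langle\cdot,\cdot\rangle_{J_0}=\tfrac12\tr(\cdot\,\cdot)$ (matching the $g_\Hyp$-orthonormality of $\{\partial_x,\partial_y\}$ at $y=1$), that $\mathcal I\,d_ij(\partial_x)=-J_0\,d_ij(\partial_x)=d_ij(\partial_y)$ (matching $i\cdot\partial_x=\partial_y$), and that $\Omega_{J_0}(d_ij(\partial_x),d_ij(\partial_y))=-1=\omega_\Hyp(\partial_x,\partial_y)$ at $i$. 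These three checks show $d_ij$ intertwines the metrics, complex structures, and symplectic forms at the base point, and invariance propagates the conclusion to all of $\Hyp$.

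The only genuinely delicate step is the sign and orientation bookkeeping in this last computation: one must match the conventions $\mathcal I(\dot J)=-J\dot J$ and $\Omega_J=-\langle\cdot,J\cdot\rangle$ on $\almost$ against the chosen signs of $\omega_\Hyp$ and of multiplication by $i$ on $\Hyp$, since these decide whether $j$ comes out holomorphic (as it should) rather than anti-holomorphic and whether $j^*\omega_\Hyp=\Omega$ holds with the correct sign. Everything else is the standard homogeneous-space argument, and the reduction to the base point turns the analytic claim into a finite linear-algebra verification.
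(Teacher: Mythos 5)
Your proof is correct. Note that the paper itself gives no proof of this lemma: it is imported verbatim from Trautwein's thesis (cited as Lemma 4.3.2 there), so there is no internal argument to compare against; your homogeneous-space argument is the standard route and is complete. All the computational checkpoints you flag do come out right: $\Stab(i)=\Stab(J_0)=\SO(2)$, the matrix $A=\begin{psmallmatrix} \sqrt{y} & x/\sqrt{y} \\ 0 & 1/\sqrt{y}\end{psmallmatrix}$ sends $i$ to $x+iy$ and $AJ_0A^{-1}$ reproduces \eqref{eq:kahlerisometryalmost}, and at the base point $d_ij(\partial_x)=\begin{psmallmatrix} 1 & 0 \\ 0 & -1\end{psmallmatrix}$, $d_ij(\partial_y)=\begin{psmallmatrix} 0 & -1 \\ -1 & 0\end{psmallmatrix}$ are $\langle\cdot,\cdot\rangle_{J_0}$-orthonormal, satisfy $-J_0\,d_ij(\partial_x)=d_ij(\partial_y)$, and give $\Omega_{J_0}\big(d_ij(\partial_x),d_ij(\partial_y)\big)=-\tfrac12\tr(\mathds{1})=-1=\omega_{\Hyp}(\partial_x,\partial_y)\big|_{y=1}$, consistent with the sign conventions $\mathcal I\dot J=-J\dot J$ and $\Omega_J=-\langle\cdot,J\cdot\rangle_J$. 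The one point worth making explicit if you write this up: the condition $\rho_0(v,Jv)>0$ ``for some $v$'' is equivalent to positivity for all $v\neq 0$ (the form $cv_1^2-2av_1v_2-bv_2^2$ has negative discriminant $4(a^2+bc)=-4$, hence is definite), which is what guarantees the $\SL(2,\R)$-orbit of $J_0$ is exactly $\almost$ and not the union of two $\GL(2,\R)$-conjugacy classes.
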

After defining Teichm\"uller space of the torus $\mathcal T(T^2)$ as the set of isotopy classes of unit area flat metrics on $T^2$, then it is easy to see that there is a $\mathrm{MCG}(T^2)\cong\SL(2,\Z)$-equivariant isomorphism between $\mathcal T(T^2)$ and $\almost$. In particular, the K\"ahler metric $\big(\langle\cdot,\cdot\rangle,\mathcal I,\Omega\big)$ can be pulled-back on $\mathcal T(T^2)$ using the equivariant isomorphisms described above.\newline We conclude with a technical result that will be useful later on. \begin{lemma}[{\cite[Lemma 3.9]{mazzoli2021parahyperkahler}}]\label{lem:dotJ}
For every $\dot J,\dot J',\dot J''\in T_J\almost$ the following holds \newline $(i)$ $\dot J\dot J'=\langle\dot J,\dot J'\rangle\mathds{1}-\langle J\dot J,\dot J'\rangle J$;
\newline $(ii)$ $\tr(\dot J\dot J'\dot J'')=\tr(J\dot J\dot J'\dot J'')=0 \ .$
\end{lemma}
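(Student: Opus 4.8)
The plan is to reduce everything to two-by-two linear algebra, exploiting the fact that $T_J\almost$ consists of \emph{traceless} endomorphisms. Recall from the discussion above that $\dot J\in T_J\almost$ if and only if $\dot J$ is $g_J$-symmetric and anticommutes with $J$, and that $\mathcal{I}\dot J=-J\dot J$ again lies in $T_J\almost$; in particular $\dot J,\dot J',\dot J''$ and $J\dot J,J\dot J',J\dot J''$ are all traceless. The one computational input I would isolate first is Cayley--Hamilton in dimension two: every traceless $A\in\End(\R^2)$ satisfies $A^2=-\det(A)\dsone=\tfrac12\tr(A^2)\dsone$. Applying this to $\dot J$ gives $\dot J^2=\langle\dot J,\dot J\rangle\dsone$, and polarizing the identity $(\dot J+\dot J')^2=\langle\dot J+\dot J',\dot J+\dot J'\rangle\dsone$ yields the anticommutator relation
\[
\dot J\dot J'+\dot J'\dot J=2\langle\dot J,\dot J'\rangle\dsone .
\]

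For $(i)$ I would decompose the product $\dot J\dot J'$ into the parts that are symmetric and antisymmetric under exchanging the two factors. The symmetric part is $\tfrac12(\dot J\dot J'+\dot J'\dot J)=\langle\dot J,\dot J'\rangle\dsone$ by the relation above, so it only remains to identify the commutator $\tfrac12[\dot J,\dot J']$. Since $\dot J$ and $\dot J'$ are $g_J$-symmetric, the $g_J$-adjoint of $\dot J\dot J'$ is $\dot J'\dot J$, hence $[\dot J,\dot J']$ is $g_J$-antisymmetric; and since the $g_J$-skew endomorphisms of the plane form the line $\R J$, we get $\tfrac12[\dot J,\dot J']=\lambda J$ for some scalar $\lambda$. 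I would then compute $\lambda$ by applying $A\mapsto\tr(JA)$ to both sides: using $\dot J J=-J\dot J$ the left-hand side becomes $\tr(J\dot J\dot J')=2\langle J\dot J,\dot J'\rangle$, while the right-hand side equals $\lambda\tr(J^2)=-2\lambda$, so $\lambda=-\langle J\dot J,\dot J'\rangle$. Combining the two parts gives exactly $\dot J\dot J'=\langle\dot J,\dot J'\rangle\dsone-\langle J\dot J,\dot J'\rangle J$.

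Finally, $(ii)$ should follow at once from $(i)$. Multiplying the formula in $(i)$ on the right by $\dot J''$ gives $\dot J\dot J'\dot J''=\langle\dot J,\dot J'\rangle\dot J''-\langle J\dot J,\dot J'\rangle J\dot J''$, and taking the trace kills both terms because $\dot J''$ and $J\dot J''$ are traceless; applying $J$ on the left and using $J^2=-\dsone$ disposes of $\tr(J\dot J\dot J'\dot J'')$ in the same way. The only step that requires genuine care is the identification of $\tfrac12[\dot J,\dot J']$ with a multiple of $J$ together with the bookkeeping of its coefficient; as a cross-check one may instead use the $\SL(2,\R)$-invariance of all quantities involved to conjugate $J$ to $J_0=\begin{psmallmatrix}0&-1\\1&0\end{psmallmatrix}$, write $\dot J=\begin{psmallmatrix}a&b\\b&-a\end{psmallmatrix}$ and $\dot J'=\begin{psmallmatrix}a'&b'\\b'&-a'\end{psmallmatrix}$, and verify $(i)$ by a single direct multiplication.
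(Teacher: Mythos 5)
Your proof is correct. Note first that this is a point where there is nothing in the paper to compare against: the lemma is imported verbatim from \cite[Lemma 3.9]{mazzoli2021parahyperkahler} and the present paper gives no proof of it, so your argument stands as a self-contained derivation. The argument itself is sound at every step: the Cayley--Hamilton identity $A^2=\tfrac12\tr(A^2)\mathds{1}$ for traceless $A\in\End(\R^2)$, its polarization $\dot J\dot J'+\dot J'\dot J=2\langle\dot J,\dot J'\rangle\mathds{1}$ (valid because $\dot J+\dot J'$ is again traceless), the identification of the commutator $[\dot J,\dot J']$ as a $g_J$-skew endomorphism and hence a multiple of $J$ (which is indeed $g_J$-skew and spans the one-dimensional space of skew endomorphisms of the plane), and the evaluation of the coefficient via $\tr(J\,\cdot\,)$, where your use of anticommutation to turn $\tfrac12\tr(J[\dot J,\dot J'])$ into $\tr(J\dot J\dot J')=2\langle J\dot J,\dot J'\rangle$ is exactly right. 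Part $(ii)$ then follows immediately from $(i)$, using that both $\dot J''$ and $J\dot J''$ lie in $T_J\almost$ and are therefore traceless. One small point worth making explicit if you write this up: $J\dot J$ and $J\dot J''$ do lie in $T_J\almost$ (they anticommute with $J$ because $\dot J$, $\dot J''$ do), which is what licenses both the pairing $\langle J\dot J,\dot J'\rangle$ and the vanishing $\tr(J\dot J'')=0$; you use this implicitly. The closing cross-check by conjugating to $J_0$ and multiplying symmetric traceless matrices directly is also a perfectly legitimate alternative proof of $(i)$ on its own, since all quantities transform equivariantly under the $\SL(2,\R)$-action.
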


\subsection{A vector bundle over Teichm\"uller space} \label{sec:vector_bundle}
The next step is to define a vector bundle over $\almost$, using the pairwise description $(J,U)$ where $U$ is the tensor defined in Section \ref{subsec:tensors}, so that the fiber over a point in Teichm\"uller space $\mathcal T(T^2)$ can be identified with the space of holomorphic quartic differentials on $(T^2,J)$. In this regard, let us consider the following set 
\begin{equation}\label{eq:vectorbundleoveralmost}
    \maximal:=\{(J,T)\in\almost\times S_4(\R^2) \ | \ T(J\cdot,J\cdot,\cdot,\cdot)=-T\}
\end{equation}
where $S_4(\R^3)$ is the space of totally-symmetric $(0,4)$ tensors. It is a smooth manifold of real dimension $4$ which fibre over $\almost$. Moreover, for any $(J,T)\in\maximal$ we can define the associated $(1,3)$ tensor $U:=g_J^{-1}T$, namely $U$ is completely determined by the formula \begin{equation}g_J\big(U(X,Y)Z,W\big)=T(X,Y,Z,W) , \ \text{for any} \ X,Y,Z,W\in\R^2 \ .\end{equation} Since $T$ is totally-symmetric, we have that $U(X,Y)Z=U(Z,X)Y=U(Y,Z)X$ for any $X,Y,Z\in\R^2$. Hence, we can consider the element $U$ as a bi-linear symmetric form in the first two entries so that for any vectors $X,Y\in\R^2$, we obtain an endomorphism $U(X,Y)$ of $\R^2$ which is symmetric with respect to $g_J$. \begin{lemma}\label{lem:propertiesUeT}
For any point $(J,T)$ in $\maximal$ the following properties hold: \begin{enumerate}
\item[(i)] $T(J\cdot,J\cdot,J\cdot,J\cdot)=T$;
    \item[(ii)] $T(J\cdot,\cdot,\cdot,\cdot)=T(\cdot,J\cdot,\cdot,\cdot)=T(\cdot,\cdot,J\cdot,\cdot)=T(\cdot,\cdot,\cdot,J\cdot);$ \item[(iii)] $U(JX,Y)Z=U(X,JY)Z=U(X,Y)JZ , \ \text{for any} \ X,Y,Z\in\R^2;$ \item[(iv)] $JU(X,Y)Z=-U(X,Y)JZ , \ \text{for any} \ X,Y,Z\in\R^2;$ \item[(v)] The endomorphism part of $U$ is trace-less.  
\end{enumerate}
\end{lemma}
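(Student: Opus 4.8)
The plan is to derive all five properties from the single defining relation $T(J\cdot,J\cdot,\cdot,\cdot)=-T$ together with the total symmetry of $T$ and the elementary compatibility of the pair $(g_J,J)$, transporting everything to $U$ through the tautological identity $g_J(U(X,Y)Z,W)=T(X,Y,Z,W)$. Two preliminary facts will be used throughout. First, since $\det J=1$ and $g_J(\cdot,\cdot)=\rho_0(\cdot,J\cdot)$, the endomorphism $J$ is $g_J$-skew-adjoint, $g_J(JX,Y)=-g_J(X,JY)$, while each $U(X,Y)$ is $g_J$-symmetric, as already recorded above. Second, because $T$ is totally symmetric, the hypothesis $T(JX,JY,Z,W)=-T(X,Y,Z,W)$ in fact holds after applying $J$ to any pair of the four slots: rearranging the arguments so that the two $J$-ed slots sit in the first two positions and invoking the defining relation gives $T(\dots JX_i\dots JX_j\dots)=-T(\dots X_i\dots X_j\dots)$ for every pair $i\ne j$.

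The key step is item (ii). Here I would substitute $X_j\mapsto JX_j$ into the two-slot relation and use $J^2=-\mathds{1}$: slot $j$ then loses its $J$ up to the sign $-1$ coming from $J^2$, slot $i$ keeps its $J$, and the two minus signs cancel, yielding $T(\dots JX_i\dots X_j\dots)=T(\dots X_i\dots JX_j\dots)$. Applying this across adjacent pairs shows that the four single-$J$ tensors coincide, which is exactly item (ii). Item (i) is then immediate: apply the two-slot relation first to slots $1,2$ and then to slots $3,4$, so that $T(JX,JY,JZ,JW)$ picks up two minus signs that cancel and equals $T(X,Y,Z,W)$. This substitution trick for (ii) is the only genuinely non-formal point; everything else is sign-bookkeeping.

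For (iii)–(v) I would push (i)–(ii) through $U=g_J^{-1}T$ and use non-degeneracy of $g_J$. For (iii), the three quantities $g_J(U(JX,Y)Z,W)=T(JX,Y,Z,W)$, $g_J(U(X,JY)Z,W)=T(X,JY,Z,W)$ and $g_J(U(X,Y)JZ,W)=T(X,Y,JZ,W)$ are all equal by (ii), so non-degeneracy forces $U(JX,Y)Z=U(X,JY)Z=U(X,Y)JZ$. For (iv) I would compute $g_J(JU(X,Y)Z,W)=-g_J(U(X,Y)Z,JW)=-T(X,Y,Z,JW)$ using skew-adjointness of $J$, compare it with $g_J(-U(X,Y)JZ,W)=-T(X,Y,JZ,W)$, and note the two right-hand sides agree by (ii); hence $JU(X,Y)=-U(X,Y)J$. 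Finally (v) follows from (iv): an endomorphism $P$ of $\R^2$ with $PJ=-JP$ satisfies $P=-JPJ^{-1}$, so $\tr P=-\tr P=0$; alternatively one evaluates $\tr U(X,Y)=T(X,Y,e_1,e_1)+T(X,Y,Je_1,Je_1)$ in a $g_J$-orthonormal frame $\{e_1,Je_1\}$ and uses the two-slot relation on the last two entries to cancel the terms.

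I do not anticipate a serious obstacle here: the statement is the linear-algebraic counterpart, on $(\R^2,J)$, of Theorem \ref{thm:tensorsandquarticdifferentials}, and indeed one could shortcut (iii) and (v) by checking that the defining condition places $T$ in the hypothesis of that theorem. The only steps demanding care are the sign bookkeeping in the substitution argument for (ii) and verifying that the two-slot relation legitimately transports across all pairs of slots via total symmetry; once those are pinned down, the remaining implications are routine applications of non-degeneracy and the skew-adjointness of $J$.
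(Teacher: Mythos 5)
Your proof is correct and takes essentially the same approach as the paper: both arguments transport the defining relation $T(J\cdot,J\cdot,\cdot,\cdot)=-T$ across pairs of slots via total symmetry to get (i) and (ii), then pass to $U$ through the identity $g_J(U(X,Y)Z,W)=T(X,Y,Z,W)$ and non-degeneracy of $g_J$, using the compatibility of $J$ with $g_J$ for (iv) and the anti-commutation $JU(X,Y)=-U(X,Y)J$ to conclude tracelessness in (v). The only cosmetic differences are that you isolate the ``two-slot relation'' explicitly and prove (ii) by a substitution $X_j\mapsto JX_j$ (the paper instead routes (ii) through (i)), which changes nothing of substance.
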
\begin{proof}
Throughout the proof we will denote by $X,Y,Z$ and $W$ arbitrary vectors in $\R^2$. \newline $(i)$ The first property can be easily deduced: \begin{align*}T(JX,JY,JZ,JW)&=-T(J^2X,J^2Y,JZ,JW) \tag{$T\in\maximal$} \\ &=-T(JZ,JW,X,Y) \tag{$T$ \text{is totally-symmetric}} \\ &=T(J^2Z,J^2,X,Y) \tag{$T\in\maximal$} \\ &=T(X,Y,Z,W) \ . \tag{$T$ \text{is totally-symmetric}}\end{align*} $(ii)$ The second claim follows from (\ref{eq:vectorbundleoveralmost}) and the following computation: $$T(JX,Y,Z,W)=T(J^2X,JY,JZ,JW)=T(X,JY,J^2Z,J^2W)=T(X,JY,Z,W) \ ,$$ and the same applies to the other entries. \newline $(iii)$ The third one can be deduced from the definition of $U$ and an application of $(ii)$. In fact, \begin{align*}
    g_J\big(U(JX,Y)Z,W\big)&=T(JX,Y,Z,W) \\ &=T(X,JY,Z,W) \\ &=g_J\big(U(X,JY)Z,W\big) \\ &= T(X,Y,JZ,W) \\ &=g_J\big(U(X,Y)JZ,W\big) \ .
\end{align*}$(iv)$ The fourth property is a consequence of the relation $g_J(J\cdot,J\cdot)=g_J$, as shown by the calculations performed below \begin{align*}
    g_J\big(JU(X,Y)Z,W\big)&=g_J\big(J^2U(X,Y)Z,JW\big) \\ &=-T(X,Y,Z,JW) \\ &=-T(X,Y,JZ,W) \tag{Property $(ii)$} \\ &=-g_J\big(U(X,Y)JZ,W\big) \ .
\end{align*}$(v)$ Since the endomorphism $U(X,Y)$ anti-commute with $J$ it is both $g_J$-symmetric and trace-less.
\end{proof}
\begin{proposition}\label{prop:isomorphismmaximalandquartic}
There is an isomorphism as smooth vector bundles between $\quartic$ and $\maximal$, where $\quartic$ is the vector bundle of holomorphic quartic differentials over $\mathcal{T}(T^2)$.
\end{proposition}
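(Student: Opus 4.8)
The plan is to build an explicit fiberwise-linear bundle map and then recognize that both fibers are the same complex line. Under the $\mathrm{MCG}(T^2)$-equivariant identification $\almost \cong \mathcal T(T^2)$ recalled above, the two bundles sit over a common base, so it suffices to construct a smooth map covering the identity that restricts to a linear isomorphism on each fiber.

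Given $(J,T)\in\maximal$, the defining relation $T(J\cdot,J\cdot,\cdot,\cdot)=-T$ together with total symmetry is precisely hypothesis $(ii)$ of Theorem \ref{thm:tensorsandquarticdifferentials} (this is exactly what Lemma \ref{lem:propertiesUeT} verifies), so $q:=T(\cdot,\cdot,\cdot,\cdot)-iT(\cdot,\cdot,\cdot,J\cdot)$ is a complex quartic differential on $(\R^2,J)$ with $\Ree(q)=T$. Because $T$ is a constant tensor and the metric $g_J=\rho_0(\cdot,J\cdot)$ is flat, $T$ is parallel for the Levi-Civita connection $\nabla$ of $g_J$; hence $\mathrm d^{\nabla}T=0$ and Theorem \ref{thm:tensorsandquarticdifferentials}$(iii)$ shows that $q$ is holomorphic. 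Being constant, both $J$ and $q$ descend to the torus $T^2=\R^2/\Z^2$, so $q$ is a holomorphic quartic differential on $(T^2,J)$. I would then set $\Phi(J,T):=(J,q)$; this is a bundle map $\Phi\colon\maximal\to\quartic$ covering the identity, and it is smooth because $(J,T)\mapsto q$ is polynomial in the entries of $J$ and $T$.

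It remains to check that $\Phi$ is a fiberwise isomorphism. On each fiber $\Phi$ is $\R$-linear in $T$ and injective, since $\Ree(q)=T$ recovers $T$ from $q$. For surjectivity I would invoke that on the flat torus every holomorphic quartic differential is constant: writing $q=f\,\mathrm{d}z^{4}$ in the complex coordinate $z$ adapted to $J$, holomorphicity and $\Z^2$-periodicity force $f$ to be a bounded entire function, hence constant by Liouville; its real part is then a constant totally symmetric $(0,4)$-tensor $T$, and the reverse implication in Theorem \ref{thm:tensorsandquarticdifferentials}$(ii)$ guarantees $T(J\cdot,J\cdot,\cdot,\cdot)=-T$, so that $(J,T)\in\maximal$ and $\Phi(J,T)=(J,q)$. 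A dimension count is consistent: each fiber of $\maximal$ is $2$-dimensional over $\R$, while the space of holomorphic quartic differentials on the torus is $1$-dimensional over $\C$. Hence $\Phi$ is a smooth vector bundle isomorphism, compatible with the natural complex structures on the two sides.

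The only genuinely delicate point is the surjectivity step, namely the passage between the two descriptions of a fiber, identifying a \emph{holomorphic quartic differential on the flat torus} with a \emph{constant tensor $T\in\maximal$}. This hinges on Liouville's theorem forcing such differentials to be constant, together with the backward direction of Theorem \ref{thm:tensorsandquarticdifferentials}$(ii)$; the linearity and smoothness of $\Phi$ are then routine verifications.
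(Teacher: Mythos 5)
Your proof is correct and takes essentially the same route as the paper: both build the map $\Phi(J,T)=(J,q)$ by combining Lemma~\ref{lem:propertiesUeT} with Theorem~\ref{thm:tensorsandquarticdifferentials} and then check it lifts the identification $\almost\cong\mathcal{T}(T^2)$ to the bundles. If anything, you supply the verifications the paper dismisses as ``easily shown''---holomorphicity of $q$ from constancy of $T$ and flatness of $g_J$, and fiberwise surjectivity via Liouville's theorem forcing quartic differentials on the flat torus to be constant---so your write-up is more complete than the original.
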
\begin{proof}
Recall that $\maximal$ has the structure of a real vector bundle over $\almost$ and $\quartic$ is a holomorphic vector bundle over $\mathcal T(T^2)$ endowed with the complex structure described in Section \ref{sec:linearalmost}. To any point $(J,T)\in\maximal$ we can associate the $(1,3)$ tensor $U:=g_J^{-1}T$ satisfying the properties of Lemma \ref{lem:propertiesUeT}. In particular, the endomorphism part of $U$ is $g_J$-symmetric and trace-less, thus by Theorem \ref{thm:tensorsandquarticdifferentials} there exists a holomorphic quadratic differential $q$ whose real part is equal to $T$ and which is uniquely determined by the pair $(J,T)$. In particular, we obtain a well-defined map \begin{align*}\Phi: \ &\mathcal{M}(\R^2)\longrightarrow\quartic \\ &(J,T)\longmapsto (J,q)\end{align*}which is easily shown to be the natural lifting of the isomorphism $\almost\cong\mathcal{T}(T^2)$, at the level of the corresponding smooth vector bundles. 
\end{proof}In the following we will go on to explore further properties of the $U$ tensor or, equivalently, those of $T$. Recall that for each $(J,U)\in\maximal$ we can think of $U$ as a symmetric bi-linear form in the first two entries so that for each $X,Y\in\R^2$ the element $U(X,Y)$ is a $g_J$-symmetric and trace-less endomorphism of $\R^2$ (Lemma \ref{lem:propertiesUeT}). Let $\{e_1,e_2\}$ be a $g_J$-orthonormal basis of $\R^2$ such that $Je_1=e_2$ and with dual basis denoted by $\{e_1^*,e_2^*\}$. The set of all trace-less and symmetric bi-linear forms on $\R^2$ with respect to $g_J$ is a real vector space of dimension two with a basis given by \begin{equation}\label{eq:basissymmetricbilinear}\{e_1^*\otimes e_1^*-e_2^*\otimes e_2^*; e_1^*\otimes e_2^*+e_2^*\otimes e_1^*\} \ .\end{equation} Thus, we can write $$U=U_1\otimes\big(e_1^*\otimes e_1^*-e_2^*\otimes e_2^*\big)+U_2\otimes\big(e_1^*\otimes e_2^*+e_2^*\otimes e_1^*\big) \ , $$ where $U_1:=U(e_1,e_1)$ and $U_2:=U(e_1,e_2)$ are trace-less and $g_J$-symmetric matrices.
\begin{lemma}\label{lem:JUunoUdue}
For any $(J,U)\in\maximal$ and using the notations above, we get $JU_2=U_1$.
\end{lemma}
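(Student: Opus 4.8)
The plan is to reduce everything to identities between endomorphisms and then combine two of the properties already established in Lemma~\ref{lem:propertiesUeT}. The key observation is that both $U_1=U(e_1,e_1)$ and $U_2=U(e_1,e_2)$ are endomorphisms of $\R^2$, and that parts (iii) and (iv) of Lemma~\ref{lem:propertiesUeT} can be read as relations between endomorphisms once the final evaluation argument $Z$ is suppressed.

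First I would rewrite $U_2$ using $e_2=Je_1$, so that $U_2=U(e_1,Je_1)$. Applying property (iii) of Lemma~\ref{lem:propertiesUeT} with $X=Y=e_1$, which at the level of endomorphisms reads $U(e_1,Je_1)=U(e_1,e_1)J$, immediately gives $U_2=U_1J$.

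Next I would invoke property (iv) of the same lemma with $X=Y=e_1$, namely the anti-commutation relation $JU_1=-U_1J$. Combining the two identities and using $J^2=-\mathds{1}$, I compute
\[
JU_2=J(U_1J)=(JU_1)J=(-U_1J)J=-U_1J^2=U_1 \ ,
\]
which is the desired equality.

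The argument is entirely formal, so there is no genuine obstacle; the only point requiring care is to interpret the conclusions of Lemma~\ref{lem:propertiesUeT} as equalities of endomorphisms (dropping the vector $Z$) and to track the order of composition correctly, since $J$ and $U_1$ anti-commute rather than commute.
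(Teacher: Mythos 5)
Your proof is correct, and it takes a genuinely different route from the paper's. The paper argues in coordinates: it writes $U_1$, $U_2$ and $JU_2$ as explicit $2\times 2$ matrices in the $g_J$-orthonormal basis and verifies entry by entry that $U_1=JU_2$, each time translating a matrix entry back into a value of the tensor $T$ (e.g.\ $a_1=T(e_1,e_1,e_1,e_1)=-c_2$) and invoking the symmetries of $T$ from Lemma \ref{lem:propertiesUeT}. You instead stay at the level of endomorphisms: reading properties (iii) and (iv) of Lemma \ref{lem:propertiesUeT} as operator identities (legitimate, since they hold for every evaluation vector $Z$), you first extract the intermediate relation $U_2=U(e_1,Je_1)=U_1J$, and then conclude by the anti-commutation $JU_1=-U_1J$ together with $J^2=-\mathds{1}$, so that $JU_2=JU_1J=-U_1J^2=U_1$. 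Your argument is shorter, coordinate-free, and isolates the structural identity $U_2=U_1J$, which the paper's proof never makes explicit; it also shows the lemma is a purely formal consequence of (iii) and (iv), with no further appeal to total symmetry of $T$. What the paper's computation buys in exchange is self-containment at the level of the tensor $T$ itself: it only uses the defining relation $g_J(U(X,Y)Z,W)=T(X,Y,Z,W)$ and the $J$-symmetries of $T$, without requiring the reader to accept the endomorphism-level reformulation of (iii) and (iv). Both are valid; yours is the cleaner derivation.
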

\begin{proof}
Let us write the matrices $U_1, U_2$ and $JU_2$ in the $g_J$-orthonormal basis as $$U_1=\begin{pmatrix}
    a_1 & b_1 \\ c_1 & d_1
\end{pmatrix}, \quad U_2=\begin{pmatrix}
    a_2 & b_2 \\ c_2 & d_2
\end{pmatrix}, \quad JU_2=\begin{pmatrix}
    -c_2 & -d_2 \\ a_2 & b_2
\end{pmatrix} \ . $$ We prove that the four entries of $U_1$ are the same as those of $JU_2$. Details will be given for only one case, but the other three follow easily. In fact, \begin{align*}
    a_1=g_J\big(U_1\cdot e_1,e_1\big)=g_J\big(U(e_1,e_1)e_1,e_1\big)=T(e_1,e_1,e_1,e_1)
\end{align*}and exploiting the properties of Lemma \ref{lem:propertiesUeT} we conclude \begin{align*}
    -c_2&=g_J\big(JU_2\cdot e_1,e_1\big)=g_J\big(JU(e_1,e_2)e_1,e_1\big)=-g_J\big(U(e_1,e_2)e_2,e_1\big) \\ &=-T(e_1,Je_1,Je_1,e_1)=T(e_1,e_1,e_1,e_1) \ .
\end{align*}
\end{proof}
 Let $(J,U)$ and $(J',U')$ be two elements in $\maximal$ and let us denote with $\sigma_1,\sigma_2$ the first and second element of the basis in (\ref{eq:basissymmetricbilinear}), respectively. Then, if we write $U=U_1\otimes\sigma_1+U_2\otimes\sigma_2$ and $U'=U_1'\otimes\sigma_1+U_2'\otimes\sigma_2$, we can introduce a scalar product \begin{equation}\label{eq:scalarproductU}\begin{aligned}
    \langle U,U'\rangle_J:&=\frac{1}{4}\Big(\tr(U_1U_1')\tr(g_J^{-1}\sigma_1g_J^{-1}\sigma_1)+\tr(U_2U'_2)\tr(g_J^{-1}\sigma_2g_J^{-1}\sigma_2)\Big) \\ &=\frac{1}{2}\Big(\tr(U_1U_1')+\tr(U_2U_2')\Big) \ .
\end{aligned}\end{equation}In particular, the norm of a tensor $U$ can be expressed as $$\vl\vl U\vl\vl_J^2=\frac{1}{2}\Big(\tr((U_1)^2)+\tr((U_2)^2)\Big)=\tr((U_1)^2) \ ,$$ since $-U_2=JU_1$ according to Lemma \ref{lem:JUunoUdue}. We define an $\SL(2,\R)$-action on such tensors by the following formula \begin{equation}\label{eq:SLactiononU}
    A\cdot U:=AU(A^{-1}\cdot,A^{-1}\cdot)A^{-1}, \quad A\in\SL(2,\R) \ .
\end{equation}\begin{lemma}
    The scalar product $\langle\cdot,\cdot\rangle_J$ defined in (\ref{eq:scalarproductU}) is invariant by the $\SL(2,\R)$-action.
\end{lemma}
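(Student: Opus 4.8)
The plan is to exploit the fact that the $\SL(2,\R)$-action is realized through \emph{isometries} between $(\R^2,g_J)$ and $(\R^2,g_{A\cdot J})$, so that the scalar product, being built out of $g_J$-contractions, is automatically preserved. Concretely, the claim to establish is that $\langle A\cdot U,A\cdot U'\rangle_{A\cdot J}=\langle U,U'\rangle_J$ for every $A\in\SL(2,\R)$.

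First I would verify that, for $A\in\SL(2,\R)$, the linear map $A\colon(\R^2,g_J)\to(\R^2,g_{A\cdot J})$ is an isometry. Since $\det A=1$, the map $A$ preserves the area form, i.e. $\rho_0(AX,AY)=\rho_0(X,Y)$; combining this with the relation $A\cdot J=AJA^{-1}$ gives
\[
g_{A\cdot J}(AX,AY)=\rho_0(AX,AJA^{-1}AY)=\rho_0(AX,AJY)=\rho_0(X,JY)=g_J(X,Y) \ .
\]
In particular, if $\{e_1,e_2\}$ is a $g_J$-orthonormal basis with $Je_1=e_2$, then $\{Ae_1,Ae_2\}$ is $g_{A\cdot J}$-orthonormal and satisfies $(A\cdot J)(Ae_1)=AJA^{-1}(Ae_1)=AJe_1=Ae_2$, so it is precisely an adapted basis of the type used to define $\langle\cdot,\cdot\rangle_{A\cdot J}$.

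Next I would compute the coordinate matrices of $A\cdot U$ in this transformed basis. Directly from the definition $A\cdot U=AU(A^{-1}\cdot,A^{-1}\cdot)A^{-1}$ one gets $(A\cdot U)(Ae_i,Ae_j)=AU(e_i,e_j)A^{-1}$, so that $(A\cdot U)_1=AU_1A^{-1}$ and $(A\cdot U)_2=AU_2A^{-1}$. Plugging these into the definition of the scalar product and using the cyclic invariance of the trace,
\[
\tr\big((A\cdot U)_k(A\cdot U')_k\big)=\tr\big(AU_kA^{-1}AU'_kA^{-1}\big)=\tr(U_kU'_k) , \qquad k=1,2 ,
\]
which yields $\langle A\cdot U,A\cdot U'\rangle_{A\cdot J}=\tfrac12\big(\tr(U_1U_1')+\tr(U_2U_2')\big)=\langle U,U'\rangle_J$, the desired invariance.

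The only delicate point is the bookkeeping of the reference frame: because $\langle\cdot,\cdot\rangle_J$ is defined relative to a $g_J$-adapted orthonormal basis, one must evaluate $\langle A\cdot U,A\cdot U'\rangle_{A\cdot J}$ in a basis adapted to $A\cdot J$, and the crux is the observation that $\{Ae_1,Ae_2\}$ is exactly such a basis. Conceptually, this reflects that $\langle U,U'\rangle_J$ equals, up to a universal constant factor, the full $g_J$-contraction of the totally symmetric $(0,4)$ tensors $T$ and $T'$ associated to $U$ and $U'$ via $g_J$; such a full contraction is manifestly preserved by isometries, and the matrix computation above is just the explicit verification of this coordinate-free fact.
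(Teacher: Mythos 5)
Your proof is correct, and it runs on the same engine as the paper's: after acting by $A$ everything conjugates, and cyclic invariance of the trace gives the equality. The bookkeeping, however, is organized differently. The paper keeps the decomposition $A\cdot U=AU_1A^{-1}\otimes(A^{-1})^*\sigma_1+AU_2A^{-1}\otimes(A^{-1})^*\sigma_2$ and evaluates the un-normalized line of (\ref{eq:scalarproductU}) (the one carrying the factors $\tr\big((g_J^{-1}\sigma_i)^2\big)$), for which it needs the transformation rule $g_{AJA^{-1}}^{-1}(A^{-1})^*\sigma_i=A(g_J^{-1}\sigma_i)A^{-1}$, cited from \cite[Lemma 3.3]{mazzoli2021parahyperkahler}. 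You instead transport the frame: $\det A=1$ makes $A$ an isometry from $(\R^2,g_J)$ to $(\R^2,g_{A\cdot J})$ carrying a $J$-adapted orthonormal basis $\{e_1,e_2\}$ to an $(A\cdot J)$-adapted one $\{Ae_1,Ae_2\}$, so the components of $A\cdot U$ in the new frame are $AU_iA^{-1}$ and the normalized formula applies directly. In fact the two decompositions coincide, since $(A^{-1})^*\sigma_1$ and $(A^{-1})^*\sigma_2$ are exactly the standard trace-less symmetric forms built from the dual of $\{Ae_1,Ae_2\}$; the gain in your version is that it is self-contained, replacing the external lemma by a two-line verification of the isometry property, and it makes the conceptual reason for invariance transparent (a full $g_J$-contraction is an isometry invariant). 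The one point both arguments lean on equally is that (\ref{eq:scalarproductU}) does not depend on the choice of adapted orthonormal frame: this is implicit in the notation $\langle\cdot,\cdot\rangle_J$ and is what licenses computing the left-hand side in the particular frame $\{Ae_1,Ae_2\}$, as you do, or against the pulled-back forms, as the paper does.
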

\begin{proof}
Let $U$ and $U'$ be in the fibre of $\maximal\to\almost$ over a point $J$, and let $A$ be a matrix in $\SL(2,\R)$, then we need to prove that $$\langle A\cdot U, A\cdot U'\rangle_{AJA^{-1}}=\langle U,U'\rangle_J \ .$$ First notice that if we write $U=U_1\otimes\sigma_1+U_2\otimes\sigma_2$ and $U'=U_1'\otimes\sigma_1+U_2'\otimes\sigma_2$, then $\SL(2,\R)$-action can be expressed as \begin{align*}
    &A\cdot U=AU_1A^{-1}\otimes(A^{-1})^*\sigma_1+AU_2A^{-1}\otimes(A^{-1})^*\sigma_2 \\ &A\cdot U'=AU_1'A^{-1}\otimes(A^{-1})^*\sigma_1+AU_2'A^{-1}\otimes(A^{-1})^*\sigma_2 \ .
\end{align*} By \cite[Lemma 3.3]{mazzoli2021parahyperkahler} we know that $g_{AJA^{-1}}^{-1}(A^{-1})^*\sigma_i=A(g_J^{-1}\sigma_i)A^{-1}$ for $i=1,2$, hence using the cyclic symmetry of the trace \begin{align*}
    \langle A\cdot U, A\cdot U'\rangle_{AJA^{-1}}&=\frac{1}{4}\Big(\tr(AU_1U_1'A^{-1})\tr\big((g_{AJA^{-1}}^{-1}(A^{-1})^*\sigma_1)^2\big) \\ & \ \ \ \ +\tr(AU_2U'_2A^{-1})\tr\big((g_{AJA^{-1}}^{-1}(A^{-1})^*\sigma_2)^2\big)\Big) \\ &=\frac{1}{4}\Big(\tr(U_1U_1')\tr\big((A(g_J^{-1}\sigma_1)A^{-1})^2\big) \\ & \ \ \ \ +\tr(U_2U'_2)\tr\big((A(g_J^{-1}\sigma_2)A^{-1})^2\big)\Big) \\ &=\frac{1}{4}\Big(\tr(U_1U_1')\tr\big((g_J^{-1}\sigma_1)^2\big) \\ & \ \ \ \ +\tr(U_2U'_2)\tr\big((g_J^{-1}\sigma_2)^2\big)\Big) \\ &=\langle U,U'\rangle_J \ .
\end{align*}
\end{proof}
\begin{lemma}
For any $(J,U)\in\maximal$ the following properties hold: \newline $\bullet$ $\langle UJ,U'J\rangle=\langle U,U'\rangle$; \newline $\bullet$ $\langle UJ,U'\rangle=-\langle U,U'J\rangle \ .$
\end{lemma}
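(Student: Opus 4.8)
The plan is to reduce both identities to two facts already in hand: the explicit trace formula \eqref{eq:scalarproductU} for $\langle\cdot,\cdot\rangle_J$, and the anti-commutation relation $JU(X,Y)=-U(X,Y)J$ of Lemma \ref{lem:propertiesUeT}(iv), which says that each endomorphism $U(X,Y)$ anti-commutes with $J$. First I would fix the meaning of $U\mapsto UJ$ as postcomposition of the endomorphism part with $J$, so that for $U=U_1\otimes\sigma_1+U_2\otimes\sigma_2$ in the basis \eqref{eq:basissymmetricbilinear} its components are $(UJ)_1=U_1J$ and $(UJ)_2=U_2J$. Since $U_1,U_2$ are trace-less and $g_J$-symmetric and anti-commute with $J$, the products $U_1J=-JU_1$ and $U_2J=-JU_2$ are again trace-less and $g_J$-symmetric, and $JU_2=U_1$ gives $J(U_2J)=U_1J$; hence $UJ$ is once more an element of the fibre of $\maximal\to\almost$ over $J$ satisfying Lemma \ref{lem:JUunoUdue}, and formula \eqref{eq:scalarproductU} applies to it directly.

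For the first identity I would insert the components of $UJ$ and $U'J$ into \eqref{eq:scalarproductU} and treat the two summands identically. The decisive computation is, for $i=1,2$,
\[
\tr\big((U_iJ)(U_i'J)\big)=\tr(U_iJU_i'J)=-\tr(U_iU_i'J^2)=\tr(U_iU_i') \ ,
\]
where the middle equality moves $J$ past $U_i'$ through $JU_i'=-U_i'J$ and the last uses $J^2=-\id$. Summing the two terms gives $\langle UJ,U'J\rangle=\langle U,U'\rangle$.

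For the second identity the same bookkeeping with a single factor of $J$ gives, for $i=1,2$,
\[
\tr\big((U_iJ)U_i'\big)=\tr(U_iJU_i')=-\tr(JU_iU_i')=-\tr\big(U_i(U_i'J)\big) \ ,
\]
using $U_iJ=-JU_i$ followed by cyclicity of the trace. Summing over $i$ and dividing by $2$ yields $\langle UJ,U'\rangle=-\langle U,U'J\rangle$. I do not expect a genuine obstacle: both statements are short $2\times2$ trace manipulations once anti-commutation with $J$ is available, and the only point requiring attention is the preliminary check that $UJ$ still lies in the fibre, so that \eqref{eq:scalarproductU} is legitimately applicable — which is precisely the compatibility with Lemma \ref{lem:JUunoUdue} recorded above.
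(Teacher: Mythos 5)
Your proof is correct and follows essentially the same route as the paper: both apply the explicit trace formula \eqref{eq:scalarproductU} componentwise and reduce each identity to the anti-commutation $U_iJ=-JU_i$ together with $J^2=-\mathds{1}$ and cyclicity of the trace. The only addition is your preliminary check that $UJ$ remains in the fibre (trace-less, $g_J$-symmetric components compatible with Lemma \ref{lem:JUunoUdue}), which the paper leaves implicit; this is a harmless and slightly more careful touch, not a different argument.
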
\begin{proof}
It is easy to show the above properties by using (\ref{eq:scalarproductU}) and $U_iJ=-JU_i$ for $i=1,2$. In fact,
\begin{align*}
    \langle UJ,U'J\rangle&=\frac{1}{2}\Big(\tr(U_1JU_1'J)+\tr(U_2JU_2'J)\Big) \\ &=-\frac{1}{2}\Big(\tr(U_1U_1'J^2)+\tr(U_2U_2'J^2)\Big) \\ &=\langle U,U'\rangle \ .
\end{align*}A similar computation shows the second claim.
\end{proof}
\subsection{The pseudo-K\"ahler metric}\label{sec:pseudokahler}
In this section we first describe tangent vectors to the vector bundle $\maximal\to\almost$ and then we explain how to adapt the scalar product we defined in (\ref{eq:scalarproductU}) on such elements, which will still be invariant for the $\SL(2,\R)$-action. Finally, we define a pseudo-Riemannian metric and a complex structure on $\maximal$ that will give rise to an $\SL(2,\R)$-invariant pseudo-K\"ahler metric on the subspace $\maximalflat$ we are interested in. 
\begin{proposition}\label{prop:decompositiondotU}
Let $(J,U)\in\maximal$ and let $\dot U:=g_J^{-1}\dot T$ be the unique $(1,3)$-tensor such that $g_J\big(\dot U(X,Y)Z,W\big)=\dot T(X,Y,Z,W)$ for all $X,Y,Z,W\in\R^2$. Then, an element $$(\dot J,\dot U)\in T_J\almost\times\Big(S_2(\R^2)\otimes\End(\R^2)\Big)$$ belongs to $T_{(J,U)}\maximal$ if and only if the following two conditions are satisfied: \newline $\bullet$ $\dot U_0=\dot{\widetilde U}_0+C(J,U,\dot J)$, where $\dot U_0$ is the endomorphism full trace-less part of $\dot U$, while the tensor $\dot{\widetilde U}_0$ is the trace-less part of $\dot U$ which is symmetric and $g_J$-traceless in the bi-linear form part and $C(J,U,\dot J)=2E\otimes e_2^*\otimes e_2^*$ in a local basis, with $E=U_1J\dot J\diag(-1,1)$; \newline $\bullet$ $\tr(\dot U(X,Y))=-\tr(J\dot JU(X,Y))$ for any $X,Y\in\R^2$.
\end{proposition}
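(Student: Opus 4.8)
The plan is to derive both conditions by differentiating the single defining relation of $\maximal$ and then separating the resulting linear constraint into its $\End(\R^2)$-trace and trace-free parts. First I would choose a smooth curve $s\mapsto(J_s,T_s)\in\maximal$ with $(J_0,T_0)=(J,T)$, so that $T_s(J_s\cdot,J_s\cdot,\cdot,\cdot)=-T_s$ for all $s$, and differentiate at $s=0$. Writing $\dot T$ and $\dot J\in T_J\almost$ for the derivatives, the Leibniz rule gives $\dot T(J\cdot,J\cdot,\cdot,\cdot)+\dot T+T(\dot J\cdot,J\cdot,\cdot,\cdot)+T(J\cdot,\dot J\cdot,\cdot,\cdot)=0$. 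Contracting the last slot against $g_J$ and using $\dot U=g_J^{-1}\dot T$ together with $U=g_J^{-1}T$ turns this into the endomorphism-valued identity
\begin{equation*}
\dot U(JX,JY)+\dot U(X,Y)=-U(\dot J X,JY)-U(JX,\dot J Y),\qquad X,Y\in\R^2,
\tag{$\star$}
\end{equation*}
which I claim is equivalent to the two stated conditions.

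Next I would fix a $g_J$-orthonormal frame $\{e_1,e_2\}$ with $Je_1=e_2$, write $U=U_1\otimes\sigma_1+U_2\otimes\sigma_2$ with $U_1=JU_2$ as in Lemma \ref{lem:JUunoUdue}, and set $\dot J=\begin{psmallmatrix}p&q\\q&-p\end{psmallmatrix}$. Evaluating $(\star)$ on the three pairs $(e_1,e_1)$, $(e_1,e_2)$, $(e_2,e_2)$, one finds that the mixed pair is identically satisfied (both sides vanish), while the two diagonal pairs both collapse to the single matrix equation $\dot U(e_1,e_1)+\dot U(e_2,e_2)=2qU_1-2pU_2$. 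Thus $(\star)$ is equivalent to this one relation, and the remaining work is to repackage it into the two bullets.

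For the second bullet I would take the $\End(\R^2)$-trace of the diagonal components: since $U(\cdot,\cdot)$ is trace-free by Lemma \ref{lem:propertiesUeT}, the right-hand side registers only through $\dot J$, and using the cyclic symmetry of $\dot T$ to trade the $(e_2,e_2)$-entries of $\dot U$ for $(e_1,e_1)$-entries converts the trace of the frame equation precisely into $\tr\dot U(X,Y)=-\tr(J\dot J\,U(X,Y))$, the factor $J\dot J$ arising from transporting $J$ past $\dot J$. For the first bullet I would analyse the trace-free part: after removing the endomorphism trace, the clean part $\dot{\widetilde U}_0$ (symmetric and $g_J$-trace-free in the bilinear slot) absorbs the $(e_1,e_1)$- and $(e_1,e_2)$-content of $\dot U_0$, so the discrepancy $\dot U_0-\dot{\widetilde U}_0$ is supported on the $(e_2,e_2)$-slot and equals $2E\otimes e_2^*\otimes e_2^*$ with $2E=\dot U_0(e_1,e_1)+\dot U_0(e_2,e_2)$. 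Taking the trace of the frame equation shows this sum is trace-free, so it equals $2(qU_1-pU_2)$; finally, using $U_2=-JU_1$ and the explicit form of $\dot J$ one verifies the purely algebraic identity $qU_1-pU_2=U_1J\dot J\,\diag(-1,1)$, which yields $E=U_1J\dot J\,\diag(-1,1)$.

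I expect the frame computation of $C$ to be the main obstacle, since it requires keeping the two distinct trace operations apart—the endomorphism trace on $\End(\R^2)$ and the $g_J$-trace on the symmetric bilinear slot—and fixing the canonical clean part $\dot{\widetilde U}_0$ so that the correction is supported only on $e_2^*\otimes e_2^*$. To conclude the equivalence I would run the argument in reverse: given $(\dot J,\dot U)$ satisfying the two bullets, the frame identities reassemble $\dot U(e_1,e_1)+\dot U(e_2,e_2)=2qU_1-2pU_2$ and hence $(\star)$, so the two conditions cut out exactly the vectors tangent to $\maximal$, the free data being the choice of $\dot J\in T_J\almost$ together with the clean tensor $\dot{\widetilde U}_0$.
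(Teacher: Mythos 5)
Your overall route is correct and genuinely different from the paper's. You differentiate the defining relation $T(J\cdot,J\cdot,\cdot,\cdot)=-T$ of $\maximal$ directly, reduce the resulting identity $(\star)$ in a $g_J$-orthonormal frame to the single matrix equation $\dot U(e_1,e_1)+\dot U(e_2,e_2)=2qU_1-2pU_2$, and then split into trace and trace-free parts; I checked the frame computations (the $(e_1,e_2)$-equation is indeed vacuous, the two diagonal ones coincide) and the algebraic identity $qU_1-pU_2=U_1J\dot J\diag(-1,1)$ (it follows from $U_1J=U_2$, a consequence of Lemma \ref{lem:JUunoUdue}), so the first bullet drops out exactly as you say. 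The paper argues differently: it obtains the second bullet by differentiating the pointwise identity $\tr(U(X,Y))=0$ along a curve, with the factor $J\dot J$ coming from the metric variation $g_J^{-1}\dot g_J=-J\dot J$ in $\delta(g_J^{-1}T)$ (not from ``transporting $J$ past $\dot J$''), and it does not prove the first bullet inside the proposition at all --- that is deferred to the explicit coordinate computation on $\Hyp\times\C$ in Section \ref{sec:coordinatedescription}. Your argument is therefore more self-contained, stays intrinsic, and, unlike the paper's, addresses both directions of the ``if and only if''.

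Two soft spots. First, your derivation of the second bullet does not work as literally described: taking the trace of the frame equation yields only the single scalar condition $\tr\dot U(e_1,e_1)+\tr\dot U(e_2,e_2)=0$, because the right-hand side $2qU_1-2pU_2$ is trace-free --- it does not ``register through $\dot J$''. The second bullet is three scalar conditions, and the missing input is precisely the total symmetry of $\dot T$, used as follows: for all $X,Y$,
\begin{equation*}
\tr\big(\dot U(X,Y)\big)=\sum_k\dot T(X,Y,e_k,e_k)=\sum_k\dot T(e_k,e_k,X,Y)=g_J\Big(\big(\dot U(e_1,e_1)+\dot U(e_2,e_2)\big)X,Y\Big) \ ,
\end{equation*}
so the frame equation, read entry by entry, is exactly $\tr(\dot U(X,Y))=-\tr(J\dot JU(X,Y))$, after checking $g_J\big(2(qU_1-pU_2)X,Y\big)=-\tr(J\dot JU(X,Y))$ on basis vectors. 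You gesture at this with ``cyclic symmetry of $\dot T$'', so the repair stays inside your framework, but as written the step is incorrect. Second, your closing remark that the free data is $(\dot J,\dot{\widetilde U}_0)$ cannot be right: that would make the tangent space $8$-dimensional, while $\dim\maximal=4$. The clean part $\dot{\widetilde U}_0$ is not free; it is constrained by the total symmetry of $g_J\dot U$, which your equivalence (like the proposition itself) implicitly assumes throughout --- without that assumption the two bullets cut out a strictly larger space than $T_{(J,U)}\maximal$.
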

\begin{proof}
The proof of the first requirement will become clear when, in Section \ref{sec:coordinatedescription}, we write the tangent vectors $(\dot J,\dot U)$ in coordinates. Regarding the second one, notice that $$\dot g_J=\rho(\cdot,\dot J\cdot)=-\rho(\cdot,J^2\dot J\cdot)=-g_J(\cdot,J\dot J\cdot) \ .$$ Moreover, for any $X,Y\in\R^2$ the endomorphism $U(X,Y)$ is trace-less, hence $$0=\delta\tr(U(X,Y))=\tr(\dot U(X,Y))-\tr(g^{-1}_J\dot{g_J}g^{-1}_JT) \ .$$Using the formula for the variation of the metric we obtain $g_J^{-1}\dot{g_J}=-J\dot J$ and so we conclude that $$\tr(\dot U(X,Y))=-\tr(J\dot JU(X,Y)) \ .$$
\end{proof}
\begin{remark}\label{rem:decompositionUdot}
For each $(J,U)\in\maximal$, the computations made in the previous proposition allow us to deduce the total variation of $U$ as follows: $$\delta U=\delta(g_J^{-1}T)=-g^{-1}_J\dot{g_J}g^{-1}_JT+g_J^{-1}\dot T=J\dot JU+\dot U \ .$$It is important to emphasize, however, that in the following we are going to consider the tensor $\dot U$ as the tangent vector to the fibre of $\maximal\to\almost$ and not $\delta U$. In addition, the  vector $\dot U$ can be further decomposed by looking at its endomorphism part as $$\dot U=\dot U_0+\dot U_{\text{tr}}, \quad \dot U_{\text{tr}}=-J\dot JU \ .$$
\end{remark}
Recall there is an $\SL(2,\R)$-action on $\maximal$ (see Section \ref{sec:linearalmost} and (\ref{eq:SLactiononU})), and if we look at the induced one on tangent vectors, we get a similar formula expressed as: \begin{equation}\label{eq:SLactiononDotU}
    A\cdot(\dot J,\dot U)=(A\dot JA^{-1},A\dot U(A^{-1}\cdot,A^{-1}\cdot)A^{-1}),\quad A\in\SL(2,\R) \ .
\end{equation}
Let $\{e_1,e_2\}$ be a $g_J$-orthonormal basis such that $Je_1=e_2$, and let $\{e_1^*,e_2^*\}$ be the dual basis. Let us denote with $\sigma_1$ and $\sigma_2$ the symmetric and $g_J$-traceless bi-linear forms $e_1^*\otimes e_1^*-e_2^*\otimes e_2^*$ and $e_1^*\otimes e_2^*+e_2^*\otimes e_1^*$, respectively. Then, $$\dot U_0=(\dot U_1)_0\otimes\sigma_1+(\dot U_2)_0\otimes\sigma_2+2E\otimes\big(e_2^*\otimes e_2^*\big), \quad E=U_1J\dot J\diag(-1,1) \ ,$$ where $(\dot U_1)_0$ and $(\dot U_2)_0$ are the trace-less part of the first order variations of the matrices $U_1=U(e_1,e_1)$ and $U_2=U(e_1,e_2)$, respectively. Furthermore, by pairing the endomorphism part with the bi-linear form part in a different way, we can rewrite \begin{equation}\label{eq:dotU}
    \dot U_0=(\dot U_1)_0\otimes\big(e_1^*\otimes e_1^*\big)+(\dot U_2)_0\otimes\sigma_2+\big(2E-(\dot U_1)_0\big)\otimes\big(e_2^*\otimes e_2^*\big) \ ,
\end{equation}so that it is easier to express the scalar product \begin{equation*}
    \langle\dot U_0,\dot U_0'\rangle=\frac{1}{4}\bigg(\tr((\dot U_1)_0(\dot U_1')_0)+2\tr((\dot U_2)_0(\dot U'_2)_0)+\tr(\big(2E-(\dot U_1)_0\big)\big(2E'-(\dot U_1')_0\big))\bigg)
\end{equation*}which is invariant by the $\SL(2,\R)$-action defined in (\ref{eq:SLactiononDotU}). The decomposition of the trace part is simpler $$\dot U_{\text{tr}}=(\dot U_1)_{\text{tr}}\otimes\sigma_1+(\dot U_2)_{\text{tr}}\otimes\sigma_2 $$ and the induced scalar product is given by $$\langle\dot U_{\text{tr}},\dot U'_{\text{tr}}\rangle=\frac{1}{2}\bigg(\tr((\dot U_1)_{\text{tr}}(\dot U_1')_{\text{tr}})+\tr((\dot U_2)_{\text{tr}}(\dot U_2')_{\text{tr}})\bigg) \ .$$At this point we have all the ingredients to introduce the pseudo-Riemannian metric, the complex structure, and symplectic form on $\maximal$. Let $f:[0,+\infty)\to(-\infty,0]$ be a smooth function such that: \newline $\bullet$ $f(0)=0$;
\newline $\bullet$ $f'(t)<0$ for any $t\ge 0$. \newline Then, we define the following symmetric bi-linear form on $T_{(J,U)}\maximal$ \begin{equation}\label{eq:pseudoriemannianmetric}
    \g_f\big((\dot J,\dot U),(\dot J',\dot U')\big):=\bigg(1-f\left(\frac{\|U\|^{2}}{2}\right)\bigg)\langle\dot J,\dot J'\rangle+\frac{1}{2}f'\left(\frac{\|U\|^{2}}{2}\right)\Big(\langle\dot U_0,\dot U_0'\rangle-\langle\dot U_{\text{tr}},\dot U_{\text{tr}}'\rangle\Big)
\end{equation}and the endomorphism \begin{equation}
    \i(\dot J,\dot U):=(-J\dot J,-\dot UJ-U\dot J) \ ,
\end{equation}where the products $\dot UJ$ and $U\dot J$ are performed at the level of the endomorphism part. By pairing the two objects together we obtain a bi-linear anti-symmetric form $$\ome_f(\cdot,\cdot):=\g_f(\cdot,\i\cdot)$$ which is explicitly given by \begin{equation*}
    \ome_f\big((\dot J,\dot U),(\dot J',\dot U')\big):=\bigg(f\left(\frac{\|U\|^{2}}{2}\right)-1\bigg)\langle\dot J,J\dot J'\rangle-\frac{1}{2}f'\left(\frac{\|U\|^{2}}{2}\right)\Big(\langle\dot U_0,\dot U_0'J\rangle-\langle\dot U_{\text{tr}},\dot U_{\text{tr}}'(\cdot,J\cdot)\rangle\Big)
\end{equation*}
\begin{remark}
From the formula of $\g_f$ and $\ome_f$ it is easily seen that they are invariant for the action of $\SL(2,\R)$ since they are written in terms of the scalar products defined on $(\dot J,\dot U)$. In particular, it follows that $\i$ is invariant for such an action as well.
\end{remark}
\begin{proposition}
The endomorphism $\i$ induces a complex structure on $\maximal$
\end{proposition}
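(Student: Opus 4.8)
The plan is to establish that $\i$ is a genuine (integrable) complex structure on $\maximal$ in three steps: verify the pointwise algebraic identity $\i^2=-\Id$, check that $\i$ preserves the tangent spaces $T_{(J,U)}\maximal$ so that it is a well-defined endomorphism field of $T\maximal$, and finally deduce integrability from the holomorphic bundle structure on $\quartic$.

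First, I would compute $\i^2$ directly on the ambient space $T_J\almost\times\big(S_2(\R^2)\otimes\End(\R^2)\big)$. Writing $\i(\dot J,\dot U)=(-J\dot J,-\dot UJ-U\dot J)$ and applying $\i$ once more, the first slot gives $-J(-J\dot J)=J^2\dot J=-\dot J$; in the second slot, expanding $-(-\dot UJ-U\dot J)J-U(-J\dot J)$ and collecting terms produces $-\dot U+U(\dot JJ+J\dot J)$, which equals $-\dot U$ precisely because $\dot J$ anti-commutes with $J$, the defining relation of $T_J\almost$. Hence $\i^2=-\Id$ identically, and in particular on $T_{(J,U)}\maximal$.

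Second, and this is where the real work lies, I would verify that $\i(\dot J,\dot U)$ again satisfies the two tangency conditions of Proposition \ref{prop:decompositiondotU}. The base component $-J\dot J$ is the complex structure $\mathcal I$ on $\almost$, so it belongs to $T_J\almost$ with no further checking. For the fibre component $\dot U':=-\dot UJ-U\dot J$ I would first dispatch the trace condition: computing $\tr(\dot U'(X,Y))=-\tr(\dot U(X,Y)J)-\tr(U(X,Y)\dot J)$ and comparing with the prescribed value $-\tr\big(J(-J\dot J)U(X,Y)\big)=-\tr(\dot J\,U(X,Y))$, the two agree once one observes that $\tr(\dot U(X,Y)J)$ vanishes—its trace-free part is $g_J$-symmetric, hence anti-commutes with $J$, while $\tr J=0$ kills the trace part—together with cyclicity of the trace. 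The first condition, governing the trace-free part $\dot U_0=\dot{\widetilde U}_0+C(J,U,\dot J)$ with the correction $C(J,U,\dot J)=2E\otimes e_2^*\otimes e_2^*$, $E=U_1J\dot J\diag(-1,1)$, is the delicate one: I would carry out this check in the explicit $g_J$-orthonormal frame of Section \ref{sec:coordinatedescription}, using $U_1=JU_2$ from Lemma \ref{lem:JUunoUdue} and the anti-commutation properties of Lemma \ref{lem:propertiesUeT} to show that applying $\i$ transforms the correction term attached to $\dot J$ into exactly the one attached to $-J\dot J$.

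I expect this matching of the correction term $C(J,U,\dot J)$ under $\i$ to be the main obstacle, since it is the one piece of the tangency data that does not reduce immediately to the anti-commutation relations and genuinely requires the coordinate bookkeeping of Section \ref{sec:coordinatedescription}. Finally, for integrability I would invoke the smooth bundle isomorphism $\Phi\colon\maximal\to\quartic$ of Proposition \ref{prop:isomorphismmaximalandquartic}. The total space of the holomorphic bundle $\quartic\to\T(T^2)\cong\almost$ carries an integrable complex structure, equal to $\mathcal I$ along the base and to multiplication by $i$ along the fibres, and one checks via Theorem \ref{thm:tensorsandquarticdifferentials} that $\Phi_*\i$ is exactly this structure: the base part matches because $-J\dot J=\mathcal I(\dot J)$, while along the fibres $\dot U\mapsto-\dot UJ$ corresponds to $q\mapsto iq$ on quartic differentials. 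Thus $\i$ is the $\Phi$-pullback of an integrable complex structure, hence itself integrable, completing the proof.
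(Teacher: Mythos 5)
Your opening step coincides with the paper's proof: the computation $\i^2=-\Id$ using $J\dot J=-\dot J J$ is exactly the one given there, and your trace-condition check is correct (for each $X,Y$ the endomorphism $\dot U_0(X,Y)$ is $g_J$-symmetric and trace-free, hence anti-commutes with $J$ and has $\tr(\dot U_0(X,Y)J)=0$, while the trace part is a multiple of $\mathds{1}$, so $\tr(\dot U(X,Y)J)=0$ and cyclicity gives the required identity). But the proposal has two genuine gaps. The step you yourself flag as the main obstacle --- verifying that the trace-free part of $-\dot UJ-U\dot J$ again has the form $\dot{\widetilde U}_0+C(J,U,-J\dot J)$ demanded by Proposition \ref{prop:decompositiondotU} --- is only announced, never carried out, so as written you have not shown that $\i$ is a well-defined endomorphism of $T\maximal$ at all. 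The paper's route makes this check unnecessary: it shows that $\i$ equals the pullback under $\Phi$ (together with the trivialization of Lemma \ref{lem:Trautweinfibremap}) of the complex structure of $\quartic\cong\Hyp\times\C$, by differentiating $\Ree(-iq_t)$ along arbitrary paths; since that structure maps tangent spaces to tangent spaces, well-definedness and integrability come in one stroke. You have chosen the harder direct route and then omitted its hardest part.

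Second, your integrability step is both mis-signed and incomplete. With the paper's convention $q=T(\cdot,\cdot,\cdot,\cdot)-iT(\cdot,\cdot,\cdot,J\cdot)$ one has $\Ree(-iq)=-T(\cdot,\cdot,\cdot,J\cdot)$, whose associated $(1,3)$-tensor is $-UJ$; so the vertical part $\dot U\mapsto-\dot UJ$ corresponds to $q\mapsto -iq$, not $q\mapsto iq$ (the discrepancy is exactly the anti-linearity of the fibre map $\varphi$ in Lemma \ref{lem:Trautweinfibremap}; both candidate structures are integrable, so this is repairable, but the check you describe would fail as stated). More seriously, prescribing ``$\mathcal I$ on the base and multiplication by $\pm i$ on the fibres'' does not by itself define an almost complex structure on the total space of $\quartic$: one must say what it does on directions that are neither vertical nor lie in some chosen horizontal distribution, and $\i$ manifestly couples the two slots, since its fibre component $-\dot UJ-U\dot J$ depends on $\dot J$. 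The paper's proof consists precisely of the computation that resolves this coupling: differentiating $\Ree(-iq_t)=-T_t(\cdot,\cdot,\cdot,J_t\cdot)$ along an arbitrary path gives $\widetilde T=-\dot T(\cdot,\cdot,J\cdot,\cdot)-T(\cdot,\cdot,\dot J\cdot,\cdot)$, hence $\widetilde U=-\dot UJ-U\dot J$, reproducing the full fibre component of $\i$ including the term $-U\dot J$. Your verification on purely vertical vectors (where $\dot J=0$, so $-U\dot J$ is invisible) cannot detect either issue, so the identification of $\Phi_*\i$ with an integrable structure is not established by the proposal.
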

\begin{proof}
We need to initially prove that $\i^2=-\Id$ and then the integrability of $\i$. The first claim is just a computation \begin{align*}
    \i^2(\dot J,\dot U)&=\i(-J\dot J,-\dot UJ-U\dot J) \\ &=(J^2\dot J, -(-\dot UJ-U\dot J)J-U(-J\dot J)) \\ &=(-\dot J,-\dot U+U\dot JJ+UJ\dot J) \tag{$\dot J\in T_J\almost$}\\ &=(-\dot J,-\dot U) \ .
\end{align*}Since the almost-complex structure $\i$ acts on tangent vectors $\dot J$ as that of Lemma \ref{lem:kahleronalmost}, regarding its integrability we only need to show that under the isomorphism $\Phi:\maximal\to\quartic$ explained in Proposition \ref{prop:isomorphismmaximalandquartic} it corresponds to the multiplication by $-i$ on the fibre of $\quartic$. Since the latter is integrable the former is integrable as well. This consists of computing the real part of the variation $-i\dot q$, where $q$ is a quartic $J$-holomorphic differential on the torus. First notice that, by Theorem \ref{thm:tensorsandquarticdifferentials}, we have $$\Ree(-iq)=\Im(q)=-T(\cdot,\cdot,\cdot,J\cdot)=-T(\cdot,\cdot,J\cdot,\cdot) \ .$$ Then, its variation satisfies \begin{align*}
    \widetilde T(\cdot,\cdot,\cdot,\cdot):=\Ree(-i\dot q)=-\dot T(\cdot,\cdot,J\cdot,\cdot)-T(\cdot,\cdot,\dot J\cdot,\cdot) \ .
\end{align*}Let us denote with $\widetilde U$ the associated $(1,3)$-tensor, namely $\widetilde U=g_J^{-1}\widetilde T$, so that \begin{align*}
    g_J\big(\widetilde U(X,Y)Z,W\big)&=\widetilde T(X,Y,Z,W) \\ &=-\dot T(X,Y,JZ,W)-T(X,Y,\dot JZ,W) \\ &=-g_J\big(\dot U(X,Y)JZ+U(X,Y)\dot JZ,W\big), \quad \text{for any} \ X,Y,Z,W\in\R^2 \ .
\end{align*}In other words $\widetilde U=-\dot UJ-U\dot J$, and we obtain the claim.
\end{proof}
\begin{lemma}
    For any $(\dot J,\dot U)\in T_{(J,U)}\maximal$, we have the following decomposition \begin{equation}\label{eq:decompositioncomplexstructure}
        \dot UJ+U\dot J=\dot U_0J+\dot U_{\text{tr}}(\cdot,J\cdot) \ ,
    \end{equation}where $\dot U_0J$ and $\dot U_{\text{tr}}(\cdot,J\cdot)$ represents the endomorphism full trace-less part and trace part, respectively.
\end{lemma}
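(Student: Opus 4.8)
The plan is to prove the identity by working inside the $J$-graded decomposition of $\End(\R^2)$. In dimension two one has the splitting $\End(\R^2)=\Span(\Id,J)\oplus\{\text{symmetric trace-free}\}$, where the first summand consists of the endomorphisms commuting with $J$ and the second of those anticommuting with $J$. The key preliminary observation is that the three endomorphisms entering the formula, namely $U(X,Y)$, $\dot J$ and $J\dot J$, all anticommute with $J$: for $U(X,Y)$ this is Lemma \ref{lem:propertiesUeT}(iv), for $\dot J$ it is the defining relation $J\dot J+\dot JJ=0$ of $T_J\almost$, and for $J\dot J$ it follows from these together with $J^2=-\Id$. Consequently any product of two of them commutes with $J$, hence lies in $\Span(\Id,J)$, while each of them composed with $J$ once remains symmetric and trace-free.

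First I would substitute the decomposition $\dot U=\dot U_0+\dot U_{\text{tr}}$ with $\dot U_{\text{tr}}=-J\dot JU$ from Remark \ref{rem:decompositionUdot}, so that
\[
\dot UJ+U\dot J=\dot U_0J+\bigl(\dot U_{\text{tr}}J+U\dot J\bigr).
\]
For the first summand, the tangency characterization of Proposition \ref{prop:decompositiondotU} forces the endomorphism part of $\dot U_0$ to be symmetric and trace-free, hence $J$-anticommuting; post-composing with $J$ preserves this property, so $\dot U_0J$ is again symmetric trace-free and is precisely the full trace-free part of the left-hand side. This settles the identification of the trace-free summand.

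It then remains to treat the $J$-commuting remainder $\dot U_{\text{tr}}J+U\dot J$. Using Lemma \ref{lem:propertiesUeT}(iii) in the form $U(X,JY)=U(X,Y)J$, and noting that $\dot U_{\text{tr}}$ inherits the same symmetry since it is $U$ left-multiplied by the constant endomorphism $-J\dot J$, I would rewrite $\dot U_{\text{tr}}J=\dot U_{\text{tr}}(\cdot,J\cdot)$. By the first paragraph this remainder already lies in $\Span(\Id,J)$, so the only thing left to verify is that its $J$-component vanishes, leaving the genuine multiple of the identity claimed as the trace part. This is where the second tangency condition $\tr(\dot U(X,Y))=-\tr(J\dot JU(X,Y))$ of Proposition \ref{prop:decompositiondotU} is used: it fixes the identity-component carried by $\dot U_{\text{tr}}$ and forces the $J$-components of $\dot U_{\text{tr}}J$ and of $U\dot J$ to recombine. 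I would carry out this last check in the $g_J$-orthonormal basis $\{e_1,e_2=Je_1\}$ of Section \ref{sec:vector_bundle}, writing $\dot J$, $U_1=U(e_1,e_1)$ and $U_2=U(e_1,e_2)$ (related by Lemma \ref{lem:JUunoUdue}) as explicit $2\times2$ matrices and computing $U(e_i,e_j)\dot J$ and $J\dot JU(e_i,e_j)$ directly; this is the coordinate description anticipated in Section \ref{sec:coordinatedescription}.

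The main obstacle is precisely this last cancellation in the commuting part: taken separately, $\dot U_{\text{tr}}J$ and $U\dot J$ each carry a nonzero $J$-component, and it is only after imposing the tangency relation on $\tr(\dot U)$ that they recombine into the pure trace term. Keeping track of which component of $\dot U$ the constraint actually controls, and of the sign in $\dot g_J=-g_J(\cdot,J\dot J\cdot)$, is where the genuine bookkeeping lies, whereas the trace-free part is purely formal.
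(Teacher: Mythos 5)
There is a genuine gap, and it sits precisely at the step you treat as formal bookkeeping: the identity $\dot U_{\text{tr}}J=\dot U_{\text{tr}}(\cdot,J\cdot)$ is false for the tensor $\dot U_{\text{tr}}$ that the lemma (and the rest of the paper: the definition of $\g_f$, the proof itself, the coordinate formulas of Section \ref{sec:coordinatedescription}) actually uses. There, $\dot U_{\text{tr}}$ is the multiple-of-identity part of $\dot U$, i.e.\ $(\dot U_i)_{\text{tr}}=\frac{1}{2}\tr(\dot U_i)\mathds{1}$ on the $\sigma_i$-components. Hence $\dot U_{\text{tr}}J$ has endomorphism part in $\R J$, whereas $\dot U_{\text{tr}}(\cdot,J\cdot)$ has endomorphism part in $\R\mathds{1}$ (the operation $(\cdot,J\cdot)$ only acts on the bilinear-form slots, sending $\sigma_1\mapsto-\sigma_2$ and $\sigma_2\mapsto\sigma_1$); two such tensors can only agree if they vanish. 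Your derivation of the step comes from reading Remark \ref{rem:decompositionUdot} literally, as $\dot U_{\text{tr}}=-J\dot JU$, but that equation is an abuse of notation: by Lemma \ref{lem:dotJ}(i), $-J\dot JU_i=-\langle J\dot J,U_i\rangle\mathds{1}-\langle\dot J,U_i\rangle J$, which carries a nonzero $J$-component in general, and only its $\mathds{1}$-component equals $(\dot U_i)_{\text{tr}}$ (that is exactly the content of the tangency condition $\tr(\dot U_i)=-\tr(J\dot JU_i)$). The inconsistency is fatal either way: with the correct (identity-part) reading, your step is false, and if it were true the lemma would collapse to the assertion $U\dot J=0$, which fails whenever $U\neq0$ and $\dot J\neq 0$; with the literal reading $\dot U_{\text{tr}}:=-J\dot JU$, the complement $\dot U_0=\dot U+J\dot JU$ is no longer symmetric and trace-free, which destroys your first-paragraph identification of $\dot U_0J$ as the trace-free part. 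Note that your own closing paragraph contradicts the step, since there you (correctly) assert that $\dot U_{\text{tr}}J$ carries a nonzero $J$-component.

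The mechanism you sketch at the end is the right one, and it is what the paper's proof does; the repair is to delete the step and run that cancellation for the \emph{whole} remainder $\dot U_{\text{tr}}J+U\dot J$, never for the two pieces separately. Concretely: $\dot U_{\text{tr}}J=\frac{1}{2}\tr(\dot U_i)J\otimes\sigma_i$ is purely $J$-valued; expanding $U\dot J=U_i\dot J\otimes\sigma_i$ by Lemma \ref{lem:dotJ}(i) gives $U_i\dot J=\frac{1}{2}\tr(U_i\dot J)\mathds{1}-\frac{1}{2}\tr(JU_i\dot J)J$; the tangency relation together with cyclicity, $\tr(JU_i\dot J)=-\tr(J\dot JU_i)=\tr(\dot U_i)$, makes the $J$-parts of the two pieces cancel; and the surviving $\mathds{1}$-part is identified with $\dot U_{\text{tr}}(\cdot,J\cdot)$ via Lemma \ref{lem:JUunoUdue}, which gives $\tr(U_1\dot J)=\tr(\dot U_2)$ and $\tr(U_2\dot J)=-\tr(\dot U_1)$, combined with $\sigma_1(\cdot,J\cdot)=-\sigma_2$ and $\sigma_2(\cdot,J\cdot)=\sigma_1$. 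As written, your proposal asserts both that the step holds and that it fails, so it cannot be executed; once the step is removed, the remaining outline coincides with the paper's argument.
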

\begin{proof}
First recall that by Proposition \ref{prop:decompositiondotU} we can write $\dot U=\dot U_0+\dot U_{\text{tr}}$, hence $$\dot UJ+U\dot J=\dot U_0J+\dot U_{\text{tr}}J+U\dot J \ .$$ We obtain the claim if we show that $\dot U_{\text{tr}}J+U\dot J=\dot U_{\text{tr}}(\cdot,J\cdot)$. Let $\{e_1,e_2\}$ be a $g_J$-orthonormal basis and let $\{e_1^*,e_2^*\}$ the dual one. Let $\sigma_1,\sigma_2$ be the basis in (\ref{eq:basissymmetricbilinear}), then $$\dot U_{\text{tr}}J=(\dot U_1)_{\text{tr}}J\otimes\sigma_1+(\dot U_2)_{\text{tr}}J\otimes\sigma_2, \quad (\dot U_i)_{\text{tr}}=\frac{1}{2}\tr(\dot U_i)\mathds{1} \ \text{for} \ i=1,2 \ .$$ Moreover, using Lemma \ref{lem:dotJ}, we get \begin{align*}
    U\dot J&=U_1\dot J\otimes\sigma_1+U_2\dot J\otimes\sigma_2 \\ &=\frac{1}{2}\bigg(\Big(\tr(U_1\dot J)\mathds{1}-\tr(JU_1\dot J)J\Big)\otimes\sigma_1+\Big(\tr(U_2\dot J)\mathds{1}-\tr(JU_2\dot J)J\Big)\otimes\sigma_2\bigg) \ .
\end{align*}Next, $JU_2=U_1$ implies that $\tr(U_1\dot J)=\tr(JU_2\dot J)=\tr(\dot U_2)$ (see Proposition \ref{prop:decompositiondotU}) and $\tr(U_2\dot J)=-\tr(JU_1\dot J)=-\tr(\dot U_1)$. Summing up the two terms we are interested in, we can conclude that $$\dot U_{\text{tr}}J+U\dot J=\frac{1}{2}\bigg(\tr(\dot U_2)\mathds{1}\otimes\sigma_1-\tr(\dot U_1)\mathds{1}\otimes\sigma_2\bigg) \ ,$$ and it is not difficult to see that the last term is equal to $\dot U_{\text{tr}}(\cdot,J\cdot)$.
\end{proof}
\begin{proposition}\label{prop:compatible}
The symmetric bi-linear form $\g_f$ is compatible with the complex structure $\i$
\end{proposition}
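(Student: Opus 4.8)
The plan is to verify directly that $\i$ is a $\g_f$-isometry, i.e. that
$\g_f(\i(\dot J,\dot U),\i(\dot J',\dot U'))=\g_f((\dot J,\dot U),(\dot J',\dot U'))$ for every pair of tangent vectors at a fixed $(J,U)\in\maximal$; this is the content of compatibility, and it is what guarantees that $\ome_f=\g_f(\cdot,\i\cdot)$ is genuinely antisymmetric. Since $\i$ is an endomorphism of the single tangent space $T_{(J,U)}\maximal$ and fixes the base point, the scalar factors $1-f(\|U\|^2/2)$ and $\tfrac12 f'(\|U\|^2/2)$ appearing in \eqref{eq:pseudoriemannianmetric} are untouched, so it suffices to show that the two brackets $\langle\dot J,\dot J'\rangle$ and $\langle\dot U_0,\dot U_0'\rangle-\langle\dot U_{\text{tr}},\dot U_{\text{tr}}'\rangle$ are each invariant when both entries are hit by $\i$.

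The first bracket is immediate: on the $T_J\almost$ factor $\i$ restricts to $\dot J\mapsto -J\dot J$, which is precisely the complex structure $\mathcal I$ of Lemma \ref{lem:kahleronalmost}, and that lemma already guarantees $\mathcal I$ is orthogonal for $\langle\cdot,\cdot\rangle$; hence $\langle -J\dot J,-J\dot J'\rangle=\langle\dot J,\dot J'\rangle$.

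For the second bracket I would invoke the decomposition \eqref{eq:decompositioncomplexstructure}, which splits $\i\dot U=-(\dot UJ+U\dot J)$ into its full trace-less part $(\i\dot U)_0=-\dot U_0 J$ and its trace part $(\i\dot U)_{\text{tr}}=-\dot U_{\text{tr}}(\cdot,J\cdot)$. Compatibility then reduces to the two identities
\[
\langle \dot U_0 J,\dot U_0' J\rangle=\langle\dot U_0,\dot U_0'\rangle,\qquad \langle\dot U_{\text{tr}}(\cdot,J\cdot),\dot U_{\text{tr}}'(\cdot,J\cdot)\rangle=\langle\dot U_{\text{tr}},\dot U_{\text{tr}}'\rangle ,
\]
after which the difference of the two brackets is preserved; it is worth noting that both pieces survive with the \emph{same} sign, so the minus sign separating them in $\g_f$ is harmless. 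The trace identity is a two-line check: by Proposition \ref{prop:decompositiondotU} the coefficients of $\dot U_{\text{tr}}$ are the scalar matrices $\tfrac12\tr(\dot U_i)\mathds{1}$, and precomposition with $J$ merely interchanges the basis forms $\sigma_1,\sigma_2$ up to a sign, leaving the sum of squared traces unchanged. For the full trace-less identity I would write $\langle\dot U_0,\dot U_0'\rangle=\tfrac14\sum_{i,j}\tr\big(\dot U_0(e_i,e_j)\dot U_0'(e_i,e_j)\big)$ in a $g_J$-orthonormal basis $\{e_1,e_2\}$ and use that $\tr(AJA'J)=\tr(AA')$ whenever $A'$ anticommutes with $J$.

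The delicate point, and what I regard as the main obstacle, is exactly this anticommutation requirement, which for a $2\times2$ matrix holds precisely when the matrix is trace-less and $g_J$-symmetric. For the entries $\dot U_0(e_1,e_1)=(\dot U_1)_0$ and $\dot U_0(e_1,e_2)=(\dot U_2)_0$ this is clear, but the remaining entry $\dot U_0(e_2,e_2)=2E-(\dot U_1)_0$ carries the correction term $E=U_1J\dot J\diag(-1,1)$ from Proposition \ref{prop:decompositiondotU}, whose symmetry and trace-freeness are not evident a priori. I would settle this either structurally, by observing that every $\dot U_0(X,Y)$ is the trace-less part of $g_J^{-1}\dot T(X,Y,\cdot,\cdot)$ with $\dot T$ totally symmetric and hence $g_J$-symmetric, or by the explicit computation $E=\begin{psmallmatrix} aq-bp & ap+bq \\ ap+bq & bp-aq\end{psmallmatrix}$ (writing $U_1=\begin{psmallmatrix} a & b \\ b & -a\end{psmallmatrix}$ and $\dot J=\begin{psmallmatrix} p & q \\ q & -p\end{psmallmatrix}$), which is manifestly trace-less and symmetric. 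Once all endomorphism values of $\dot U_0$ are known to anticommute with $J$, the full trace-less identity follows and compatibility $\g_f(\i\cdot,\i\cdot)=\g_f(\cdot,\cdot)$ is established.
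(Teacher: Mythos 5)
Your proposal is correct and follows essentially the same route as the paper's proof: reduce compatibility to invariance of the two brackets, handle the $T_J\almost$ factor via orthogonality of $\dot J\mapsto -J\dot J$, and use the decomposition \eqref{eq:decompositioncomplexstructure} together with the trace identity $\tr(AJA'J)=\tr(AA')$ for matrices anticommuting with $J$. In fact your treatment is slightly more careful than the paper's at the one delicate spot: the paper simply asserts $\tr(EJE'J)=\tr(EE')$ from the explicit form of $E$, whereas you justify it (structurally or by direct computation) by showing $E$ is trace-less and $g_J$-symmetric, and your uniform formula $\langle\dot U_0,\dot U_0'\rangle=\tfrac14\sum_{i,j}\tr\big(\dot U_0(e_i,e_j)\dot U_0'(e_i,e_j)\big)$ also accounts for the cross terms between $E$ and $(\dot U_1)_0$ that the paper's intermediate display silently drops.
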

\begin{proof}
It boils down in proving that for each $(J,U)\in\maximal$ the following holds: $$(\g_f)_{(J,U)}\big(\i(\dot J,\dot U),\i(\dot J',\dot U')\big)=(\g_f)_{(J,U)}\big((\dot J,\dot U),(\dot J',\dot U')\big) \ .$$
By definition of the complex structure we obtain \begin{align*}
    (\g_f)_{(J,U)}\big(\i(\dot J,\dot U),\i(\dot J',\dot U')\big)&=(1-f)\langle J\dot J,J\dot J'\rangle +\frac{f'}{2}\langle(\dot UJ+U\dot J)_0,(\dot U'J+U\dot J')_0 \rangle
\\  & \ \ \ \ \  -\frac{f'}{2}\langle(\dot UJ+U\dot J)_{\text{tr}},(\dot U'J+U\dot J')_{\text{tr}} \rangle \ , \end{align*} where the functions $f$ and $f'$ are evaluated at $\frac{\| U\|^2}{2}$, hence they are not changed after applying the complex structure. The first term can be simplified using the relation $J\dot J=-\dot JJ$, indeed $$\langle J\dot J,J\dot J'\rangle=\tr(J\dot J J\dot J')=-\tr(J^2\dot J\dot J')=\langle \dot J,\dot J'\rangle \ .$$ Regarding the second term, we use the decomposition (\ref{eq:decompositioncomplexstructure}), so that \begin{align*}
 \langle(\dot UJ+U\dot J)_0,(\dot U'J+U\dot J')_0 \rangle&=\langle\dot U_0J,\dot U_0'J\rangle \\ &=\frac{1}{2}\bigg(\tr((\dot U_1)_0J(\dot U_1')_0J)+\tr((\dot U_2)_0J(\dot U_2')_0J)\bigg)+\tr(EJE'J) \\ &=\frac{1}{2}\bigg(\tr((\dot U_1)_0(\dot U_1')_0)+\tr((\dot U_2)_0(\dot U_2')_0)\bigg)+\tr(EJE'J) \ ,
\end{align*}where we used that the endomorphism part of $(\dot U_i)_0$ and $(\dot U_i')_0$ is $g_J$-symmetric and trace-less. Moreover, since $E=U_1J\dot J\diag(-1,1)$ and $E'=U_1J\dot J'\diag(-1,1)$ we conclude that $\tr(EJE'J)=\tr(EE')$ which is exactly what we aimed for. Finally, for the last term, we use once again (\ref{eq:decompositioncomplexstructure}) and the expression in a $g_J$-orthonormal basis $$\dot U_{\text{tr}}(\cdot,J\cdot)=(\dot U_2)_{\text{tr}}\otimes(e_2^*\otimes e_2^*-e_1\otimes e_1^*)+(\dot U_1)_{\text{tr}}\otimes(e_1^*\otimes e_2^*+e_2^*\otimes e_1^*) \ .$$ In fact, \begin{align*}
    \langle(\dot UJ+U\dot J)_{\text{tr}},(\dot U'J+U\dot J')_{\text{tr}} \rangle&=\langle\dot U_{\text{tr}}(\cdot,J\cdot),\dot U'_{\text{tr}}(\cdot,J\cdot)\rangle \\ &=\frac{1}{2}\bigg(\tr((\dot U_2)_{\text{tr}}(\dot U_2')_{\text{tr}})+\tr((\dot U_1)_{\text{tr}}(\dot U_1')_{\text{tr}})\bigg) \\ &=\langle\dot U_{\text{tr}},\dot U_{\text{tr}}'\rangle \ .
\end{align*}
\end{proof}
\begin{theorem}\label{thm:pseudoKahler}
The triple $(\g_f,\i,\ome_f)$ defines an $\SL(2,\R)$-invariant pseudo-K\"ahler metric on $\maximal$ which restricts to a $\mathrm{MCG}(T^2)$-invariant pseudo-K\"ahler metric on $\maximalflat$
\end{theorem}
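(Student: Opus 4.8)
The plan is to check the defining conditions of a pseudo-K\"ahler structure, noting that all but two of them are already established, and then to dispatch the remaining two. Recall that $(\g_f, \i, \ome_f)$ is pseudo-K\"ahler precisely when $\i$ is an integrable almost-complex structure, $\g_f$ is a non-degenerate symmetric bilinear form with $\g_f(\i\cdot, \i\cdot) = \g_f(\cdot,\cdot)$, and the fundamental two-form $\ome_f = \g_f(\cdot, \i\cdot)$ is closed. The condition $\i^2 = -\Id$ together with integrability is the proposition asserting that $\i$ induces a complex structure on $\maximal$; the compatibility of $\g_f$ with $\i$ is Proposition \ref{prop:compatible}; and the $\SL(2,\R)$-invariance of $\g_f$, $\i$ and hence of $\ome_f$ was recorded in the Remark following their definition. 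Since the antisymmetry and non-degeneracy of $\ome_f$ follow formally from the $\i$-compatibility and from the non-degeneracy of $\g_f$, the statement reduces to establishing (a) that $\g_f$ is non-degenerate and (b) that $\ome_f$ is closed.

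For (a) I would read off the signature directly from the defining formula of $\g_f$. Split $T_{(J,U)}\maximal$ into the vertical subspace $\{\dot J = 0\}$, on which $\dot U = \dot U_0$ ranges over the two trace-less fiber directions, and the complementary horizontal subspace $\{\dot U_0 = 0\}$, parametrized by $\dot J$ with $\dot U = \dot U_{\text{tr}} = -J\dot J U$ (Remark \ref{rem:decompositionUdot} and Proposition \ref{prop:decompositiondotU}). These subspaces are $\g_f$-orthogonal because the defining formula pairs the $\dot U_0$- and $\dot U_{\text{tr}}$-components separately. On the vertical subspace $\g_f = \tfrac12 f'(\|U\|^2/2)\,\langle \dot U_0, \dot U_0\rangle$ is negative definite, since $f' < 0$ and $\langle\cdot,\cdot\rangle$ is positive definite; on the horizontal subspace $\g_f = \big(1 - f(\|U\|^2/2)\big)\langle\dot J,\dot J\rangle - \tfrac12 f'(\|U\|^2/2)\,\langle\dot U_{\text{tr}}, \dot U_{\text{tr}}\rangle$ is a sum of a strictly positive and a non-negative quadratic form in $\dot J$ (using $f \le 0$ and $f' < 0$ together with $(J\dot J)^2 = \langle\dot J,\dot J\rangle\,\mathds{1}$ from Lemma \ref{lem:dotJ}), hence positive definite. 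Thus $\g_f$ has neutral signature $(2,2)$ at every point of $\maximal$, in particular on the zero section where $f(0) = 0$, and is non-degenerate.

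Step (b), the closedness $d\ome_f = 0$, is the main obstacle, as it is the one requirement not reducible to the algebraic identities already in hand and it genuinely depends on how the holomorphic structure of the fibers varies over $\almost$. I would prove it in the explicit global coordinates of Section \ref{sec:coordinatedescription}, either by computing $d\ome_f$ directly in those coordinates or, more conceptually, by writing $\ome_f$ as $i\partial\bar\partial$ of a potential depending only on $\|U\|^2$ through the function $f$. This is precisely why the proof is announced to continue in that section.

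Finally, the restriction to $\maximalflat$ is immediate. By Theorem \ref{thm:flatmaximalandquartic} and Proposition \ref{prop:isomorphismmaximalandquartic}, $\maximalflat$ is identified with the complement of the zero section in $\maximal$, which is an open, $\SL(2,\R)$-invariant submanifold; hence $(\g_f, \i, \ome_f)$ restricts to a pseudo-K\"ahler structure on it. Since $\MCG(T^2) \cong \SL(2,\Z)$ embeds in $\SL(2,\R)$ and acts as the restriction of the $\SL(2,\R)$-action under the equivariant identification $\almost \cong \T(T^2)$, the $\SL(2,\R)$-invariance descends to the asserted $\MCG(T^2)$-invariance, completing the reduction.
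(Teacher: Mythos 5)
Your proposal is correct, and its skeleton coincides with the paper's: the published proof of Theorem \ref{thm:pseudoKahler} likewise observes that integrability of $\i$, compatibility (Proposition \ref{prop:compatible}) and $\SL(2,\R)$-invariance are already in place, reduces the statement to the non-degeneracy of $\g_f$ and the closedness of $\ome_f$, defers both to the coordinate computation of Proposition \ref{prop:Gnondegenereomegachiusa} in Section \ref{sec:coordinatedescription}, and finally restricts everything to the complement of the zero section exactly as you do, including the identification $\MCG(T^2)\cong\SL(2,\Z)<\SL(2,\R)$. Where you genuinely depart from the paper is point (a): the paper proves non-degeneracy by computing the full matrix of $(\g_f)_{(z,w)}$ on $\Hyp\times\C$ and finding $\det\big((\g_f)_{(z,w)}\big)=y^4(f')^2(1-f)^2\neq 0$, and it records the signature $(2,2)$ only afterwards, in a remark, by restricting to $\Hyp\times\{0\}$ and to a fibre; you instead decompose $T_{(J,U)}\maximal$ as the $\g_f$-orthogonal sum of the vertical subspace $\{\dot J=0\}$ (where $\dot U_{\text{tr}}=0$ and $E=0$, so $\g_f=\tfrac12 f'\langle\dot U_0,\dot U_0\rangle$ is negative definite since $f'<0$) and the two-dimensional graph-type complement $\{\dot U_0=0\}$ (where $\g_f=(1-f)\langle\dot J,\dot J\rangle-\tfrac12 f'\langle\dot U_{\text{tr}},\dot U_{\text{tr}}\rangle$ is positive definite since $f\le 0$ and $f'<0$). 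This splitting argument is sound: the orthogonality is indeed immediate from the block form of (\ref{eq:pseudoriemannianmetric}), and Proposition \ref{prop:decompositiondotU} together with Remark \ref{rem:decompositionUdot} guarantees that both subspaces are two-dimensional, so you get a coordinate-free proof of non-degeneracy that exhibits the neutral signature at the same time --- something the paper only obtains a posteriori. On point (b), by contrast, you add nothing beyond the paper: closedness is precisely the step that does not follow from the pointwise algebraic identities, and, like the paper, you fall back on the coordinate computation of Section \ref{sec:coordinatedescription} (your alternative suggestion that $\ome_f$ admits a global potential, $\ome_f = i\partial\bar\partial$ of a function of $\|U\|^2$, is plausible but left unverified and is not the route the paper takes); so the one step of your write-up that is named rather than executed is exactly the one the paper must, and does, verify by hand.
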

\begin{proof}
The only remaining items to be shown to obtain an $\SL(2,\R)$-invariant pseudo-K\"ahler metric on $\maximal$ are the non-degeneracy of $\g_f$ and the closedness of $\ome_f$. Their proof is deferred to Proposition \ref{prop:Gnondegenereomegachiusa} as it is necessary a computation in coordinates. \newline The complex structure $\i$ preserves the tangent to the $0$-section in $T_{(J,U)}\maximal$ since $\i_{(J,0)}(\dot J,0)=(-J\dot J,0)$. We deduce that $\i$ still defines a complex structure on the complement of the $0$-section in $\maximal$, which is identified with $\maximalflat$ according to Theorem \ref{thm:flatmaximalandquartic} and Proposition \ref{prop:isomorphismmaximalandquartic}. The same argument holds for $\g_f$ and $\ome_f$ so that we get a well-defined pseudo-K\"ahler metric on $\maximalflat$ which is $\mathrm{MCG}(T^2)\cong\SL(2,\Z)$-invariant.
\end{proof}
\begin{remark}
It should be noted that our construction holds for any choice of a smooth function $f:[0,+\infty)\to(-\infty,0]$ such that $f(0)=0$ and $f'(t)<0$ for any $t\ge 0$. So, in fact, we have proved the existence of an infinite family of pseudo-K\"ahler metrics on $\maximal$, and thus on $\maximalflat$.
\end{remark}
\subsection{Coordinate description}\label{sec:coordinatedescription}
In order to finish the proof of Theorem \ref{thm:pseudoKahler}
it only remains to show that the symmetric tensor $\g_f$ and the $2$-form $\ome_f$ on $\maximal$ are non-degenerate and closed, respectively. In order to do so, we are going to write their expression in local coordinates. First of all, in light of Lemma \ref{lem:Trautweinkahlerhyperbolic} we can identify $\mathcal T(T^2)$ with $\Hyp$ so that the diffeomorphism is $\SL(2,\R)$-equivariant. Hence, the total space of the holomorphic vector bundle of quartic differentials $\quartic\to\mathcal T(T^2)$ is diffeomorphic to $\Hyp\times\C$, where $\C$ is the copy of the fibre over a point $z\in\Hyp$. We can define an $\SL(2,\R)$-action on $\Hyp\times \C$ by
\begin{equation*}\label{Sl2Raction}\begin{pmatrix}
a & b \\ c & d
\end{pmatrix}\cdot(z,w):=\bigg(\frac{az+b}{cz+d}, (cz+d)^4w\bigg),\qquad \text{with} \ (z,w)\in\Hyp\times\C,\quad ad-bc=1 \ . \end{equation*}
Moreover, there is a metric on the fiber induced by the norm 
$$|w|_z^2=\Imm(z)^4|w|^2 \qquad \text{for} \ z\in\Hyp, w\in\C \ .$$ 
Given $J\in\almost$, let us define the space of $J$-complex symmetric quadri-linear forms by \begin{align*}S_4(\R^2, J):&=\{\gamma:\big(\R^2\big)^{\otimes^4}\longrightarrow\C \ | \ \gamma \ \text{ is symmetric and} \ (J,i)-\text{quadri-linear}\} \\ &\cong\{\tau:\R^2\to\C \ | \ \text{for all} \ \alpha,\beta\in\R \ \text{and} \ v\in\R^2 \ \text{it holds} \ \tau(\alpha v+\beta Jv)=(\alpha+i\beta)^4\tau(v)\} \ . \end{align*}This space can be seen as the fiber of a complex line bundle $\mathcal{L}(\R^2)\to\almost$ endowed with a natural $\SL(2,\R)$-action given by $$A\cdot (J,\gamma):=(AJA^{-1}, (A^{-1})^*\gamma),\qquad \ \text{for} \ A\in\SL(2,\R) \ .$$It is not difficult to see that the line bundle $\mathcal{L}(\R^2)$ can be identified with $\maximal$. In particular, each fiber $S_4(\R^2, J)$ is endowed with a scalar product from the one on $\maximal_J$ defined in (\ref{eq:scalarproductU}).\begin{lemma}[{\cite[Lemma 5.2.1]{trautwein2018infinite}}]\label{lem:Trautweinfibremap}Let us consider the map $\varphi:\quartic\to\Hom\Big(\big(\R^2\big)^{\otimes^4}, \C\Big)$ given by\begin{align*}   \varphi(z,w) : \ &\R^2\longrightarrow\C \\ & v\mapsto \bar w(v_1-\bar zv_2)^4\end{align*}and let $j:\Hyp\to\almost$ be the K\"ahler isometry defined in (\ref{eq:kahlerisometryalmost}). Then, the following holds: \begin{itemize}   \item[$\bullet$]$\varphi(z,w)\in S_4(\R^2,j(z)), \ \text{for all} \  (z,w)\in\quartic$.  \item[$\bullet$]The fibre map $\varphi(z,\cdot) : \quartic_z\cong\C\to S_4(\R^2,j(z))$ is a complex anti-linear isometry for every $z\in\Hyp$. \item[$\bullet$] The bundle map $(j,\varphi): \quartic\to\mathcal{L}(\R^2)$ is a $\SL(2,\R)$-equivariant bijection.\end{itemize}\end{lemma}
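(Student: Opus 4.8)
The plan is to handle the three assertions one at a time, in each case reducing to an explicit computation organised around the single complex linear functional $L_z\colon\R^2\to\C$, $L_z(v)=v_1-\bar z v_2$, in terms of which $\varphi(z,w)(v)=\bar w\,L_z(v)^{4}$. Because $\varphi$ is manifestly a fourth power of $L_z$, both the homogeneity condition defining $S_4(\R^2,j(z))$ and the $\SL(2,\R)$-equivariance will follow from a single linear transformation rule for $L_z$, while all metric statements can be transported from the base point $z=i$ by equivariance, since $\SL(2,\R)$ acts transitively on $\Hyp$.

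For the first bullet I would prove the identity $L_z(Jv)=i\,L_z(v)$ for $J=j(z)$ and all $v\in\R^2$. This is a direct matrix computation: inserting the explicit matrix $j(z)$ from \eqref{eq:kahlerisometryalmost} (with $z=x+iy$) into $L_z(Jv)=(Jv)_1-\bar z\,(Jv)_2$, the numerator over $y$ collapses to $iy\,(v_1-\bar z v_2)$, using $y+ix=i\bar z$. Granting this, homogeneity is immediate, since $L_z(\alpha v+\beta Jv)=(\alpha+i\beta)L_z(v)$ gives $\varphi(z,w)(\alpha v+\beta Jv)=(\alpha+i\beta)^{4}\varphi(z,w)(v)$, which is exactly the condition for $\varphi(z,w)\in S_4(\R^2,j(z))$.

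For the second bullet, complex anti-linearity of $w\mapsto\bar w\,L_z^{4}$ is built into the conjugation $\bar w$. For the isometry I would first compute the norm at $z=i$, where $j(i)=J_0$ and $g_{J_0}$ is the Euclidean product: expand $(v_1+iv_2)^{4}$, read off the totally symmetric real tensor $T=\Ree(\varphi(i,w))$, form the trace-less $g_{J_0}$-symmetric matrix $U_1=U(e_1,e_1)$ with $U=g_{J_0}^{-1}T$, and evaluate $\|U\|^{2}=\tr(U_1^{2})$ through \eqref{eq:scalarproductU}; the outcome is proportional to $|w|^{2}=|w|_i^{2}$. To pass to an arbitrary $z$ I would invoke equivariance: the fiber norm $\Imm(z)^{4}|w|^{2}$ on $\Hyp\times\C$ is invariant under $(z,w)\mapsto(\tfrac{az+b}{cz+d},(cz+d)^{4}w)$, the scalar product \eqref{eq:scalarproductU} on $\maximal$ is $\SL(2,\R)$-invariant, and $\varphi$ intertwines the two actions (the third bullet), so an isometry on one fiber propagates to every fiber.

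For the third bullet, equivariance amounts to checking $\varphi\big(A\cdot(z,w)\big)=(A^{-1})^{*}\varphi(z,w)$ for $A=\begin{psmallmatrix}a&b\\c&d\end{psmallmatrix}\in\SL(2,\R)$. This reduces to the cocycle identity $L_{A\cdot z}(Av)=(\overline{cz+d})^{-1}L_z(v)$, whose proof is a one-line computation in which the numerator collapses to $(ad-bc)L_z(v)=L_z(v)$; taking fourth powers and distributing the conjugations then matches the automorphy weight $(cz+d)^{4}$ on the fiber of $\quartic$ with the weight produced by pulling back $L_{A\cdot z}^{4}$ through $A^{-1}$. Bijectivity is then formal: $j$ is a diffeomorphism of the base by Lemma \ref{lem:Trautweinkahlerhyperbolic}, and on each fiber $\varphi(z,\cdot)$ is a conjugate-linear isometry between the complex lines $\quartic_z\cong\C$ and $S_4(\R^2,j(z))$, hence a bijection. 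I expect the main obstacle to be the norm identity of the second bullet: the delicate step is correctly polarising $\varphi(z,w)$ into the real $(0,4)$-tensor $T$, converting it to the endomorphism-valued $U$, and pinning down the normalisation relating $\tr(U_1^{2})$ to $\Imm(z)^{4}|w|^{2}$; once this is settled at $z=i$, equivariance disposes of the remaining fibers.
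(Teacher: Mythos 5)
The paper does not actually prove this lemma: it is imported wholesale as a citation to Trautwein's thesis (Lemma 5.2.1 there, stated for quadratic differentials and adapted here to the quartic, weight-four setting), so there is no internal proof to compare against. Your proposal is a correct, self-contained substitute, and its two computational pillars check out: the identity $L_z(j(z)v)=i\,L_z(v)$ does follow from the matrix \eqref{eq:kahlerisometryalmost} exactly as you describe (the numerator collapses via $y+ix=i\bar z$), and the cocycle identity $L_{A\cdot z}(Av)=(\overline{cz+d})^{-1}L_z(v)$ follows from the determinant-one cancellation, which after taking fourth powers matches the automorphy factor $(cz+d)^4$ on the fiber and yields $\varphi(A\cdot(z,w))=(A^{-1})^*\varphi(z,w)$; combined with the equivariance of $j$ from Lemma \ref{lem:Trautweinkahlerhyperbolic} and the fiberwise conjugate-linear injectivity, this gives the bijection. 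Your reduction of the isometry claim to the point $z=i$ by transitivity plus invariance of both norms is also sound. The one point you rightly flag as delicate is the normalization: at $z=i$ one finds $U_1=\begin{psmallmatrix}u & v\\ v & -u\end{psmallmatrix}$, so $\tr(U_1^2)=2|w|^2$ while $|w|_i^2=|w|^2$; the isometry therefore holds only with the convention that the norm induced on $S_4(\R^2,j(z))$ from \eqref{eq:scalarproductU} carries a factor $\tfrac12$, i.e. $\|q\|^2_{j(z)}=\tfrac12\|U\|^2_{j(z)}$, which is precisely the convention the paper uses later in Section \ref{sec:coordinatedescription} where it writes $\tfrac12\|U_{(z,w)}\|^2_{j(z)}=\|q_{(z,w)}\|^2_{j(z)}=y^4(u^2+v^2)$. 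With that factor made explicit, your argument is complete, and it has the added value of actually verifying, rather than assuming, that Trautwein's quadratic-differential statement survives the passage to quartic differentials.
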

At this point it easy to compute in coordinates the tensors $T$ and $U=g_J^{-1}T$ and their respective variations: $\dot T$ and $\dot U=g_J^{-1}\dot T$, by using the last lemma and the isomorphism $\maximal\cong\quartic$. Let $z=x+iy$ and $w=u+iv$ be the complex coordinates on $\Hyp$ and $\C$ respectively, then the bundle map $(j,\varphi)$ is given by \begin{align*}&\Hyp\times\C\longrightarrow\maximal \\ &(z,w)\longmapsto\big(j(z),T_{(z,w)}\big)\end{align*}where $T_{(z,w)}=\Ree(q_{(z,w)})$ with $q_{(z,w)}=\widebar w(\dx_0-\widebar z\dy_0)^4$ (Theorem \ref{thm:tensorsandquarticdifferentials}) and where $(x_0,y_0)$ are the coordinates on $\R^2$. Since $\SL(2,\R)$ acts transitively on $\Hyp$, it is enough to compute the tensors at the point $(i,w)\equiv(0,1,u,v)$ for a generic $w\in \C$. What we get is the following\begin{align*}
    &T_{1111}(z,w)=u,\qquad T_{1112}(z,w)=-xu+yv,\qquad T_{1122}(z,w)=ux(^2-y^2)-2xyv, \\ &T_{1222}(z,w)=-ux^3-vy^3+3(uy^2x+x^2yv), \\ & T_{2222}(z,w)=u(x^4-6x^2y^2+y^4)+4v(xy^3-x^3y) \ .\end{align*}
The remaining components are determined by the five above since $T$ is totally-symmetric. Its variation $\dot T_{(i,w)}$ at $(i,w)$ is
\begin{align*}
    &\dot T_{1111}(i,w)=\dot u,\qquad \dot T_{1112}(i,w)=-u\dot x+\dot v+v\dot y,\qquad \dot T_{1122}(i,w)=-\dot u-2(u\dot y+v\dot x), \\ &\dot T_{1222}(i,w)=-\dot v+3(u\dot x-v\dot y), \qquad\dot T_{2222}(i,w)=\dot u+4(u\dot y+v\dot x) \ . \end{align*}
The corresponding tensor $U=g_J^{-1}T$ computed at $(i,w)$ is then \begin{align*}
    U_{(i,w)}=\begin{pmatrix}
    u & v \\ v & -u
    \end{pmatrix}\otimes\big(\mathrm dx_0\otimes\mathrm{d}x_0-\mathrm dy_0\otimes\mathrm dy_0\big)+\begin{pmatrix}
    v & -u \\ -u & -v
    \end{pmatrix}\otimes\big(\dx_0\otimes\dy_0+\dy_0\otimes\dx_0\big) \ .
\end{align*}
Its variation $\dot U$ will be given in terms of its endomorphism trace-free and trace part at the point $(i,w)$
\begin{align*}\label{dotpickformcoordinates}
     (\dot U_0)_{(i,w)}&=\begin{pmatrix}
    \dot u+u\dot y+v\dot x & -u\dot x+\dot v+v\dot y \\ -u\dot x+\dot v+v\dot y & -\dot u-u\dot y-v\dot x
    \end{pmatrix}\otimes\big(\dx_{0}\otimes\dx_0\big) \\ & \ \ \ \ \ +\begin{pmatrix}
    \dot v+2(v\dot y-u\dot x) & -\dot u-2(u\dot y+v\dot x) \\ -\dot u-2(u\dot y+v\dot x) & -\dot v+2(u\dot x-v\dot y)
    \end{pmatrix}\big(\dx_{0}\otimes\dy_0+\dy_0\otimes\dx_0\big) \\ & \ \ \ \ \ + \begin{pmatrix}
        -\dot u-3u\dot y-3v\dot x & -\dot v-3v\dot y+3u\dot x \\ -\dot v -3v\dot y +3u\dot x & +\dot u +3u\dot y +3v\dot x
    \end{pmatrix}\otimes\big(\dy_0\otimes\dy_0\big) \ ,
\end{align*}where, according to (\ref{eq:dotU}), we have $$\begin{pmatrix}
        -\dot u-3u\dot y-3v\dot x & -\dot v-3v\dot y+3u\dot x \\ -\dot v -3v\dot y +3u\dot x & +\dot u +3u\dot y +3v\dot x
    \end{pmatrix}=2E-(\dot U_1)_0 \ .$$ The endomorphism trace part is\begin{align*}
    (\dot U_{\tr})_{(i,w)}&=\begin{pmatrix}
    -u\dot y-v\dot x & 0 \\ 0 & -u\dot y-v\dot x
    \end{pmatrix}\otimes\big(\mathrm dx_0\otimes\mathrm{d}x_0-\mathrm dy_0\otimes\mathrm dy_0\big) \\ & \ \ \ \ +\begin{pmatrix}
    u\dot x-v\dot y & 0 \\ 0 & u\dot x-v\dot y
    \end{pmatrix}\otimes\big(\dx_0\otimes\dy_0+\dy_0\otimes\dx_0\big) \ . 
\end{align*}
Thanks to this expression in coordinates and together with the action of $\SL(2,\R)$ on $\Hyp\times\C$, we are now able to write the pseudo-metric $\g_f$ and the symplectic form $\ome_f$ at the point $(z,w)$ (see \cite[\S 3.1]{rungi2021pseudo} for a similar computation). Let $\{\frac{\partial}{\partial x},\frac \partial{\partial y},\frac\partial{\partial u},\frac\partial{\partial v}\}$ be a real basis of the tangent space of $\Hyp\times\C$ with its dual basis $\{\dx,\dy,\du,\devu\}$, then the expression (\ref{eq:pseudoriemannianmetric}) becomes 
\begin{equation*}
    (\g_f)_{(z,w)}=\begin{pmatrix}
    \frac 1{y^2}\big(1-f+4(u^2+v^2)y^4f'\big) & 0 & 2f'vy^3 & -2f'uy^3 \\ 0 & \frac 1{y^2}\big(1-f+4(u^2+v^2)y^4f'\big) & 2f'uy^3 & 2f'vy^3 \\ 2f'vy^3 & 2f'uy^3 &  f'y^4 & 0 \\ -2f'uy^3 & 2f'vy^3 & 0 & f'y^4
\end{pmatrix}\end{equation*}\begin{align*}(\ome_f)_{(z,w)}=&\bigg(-1+f-4f'y^4(u^2+v^2)\bigg)\frac{\dx\wedge\dy}{y^2}-f'y^4\du\wedge\devu \\&-2y^3f'\bigg(u(\dx\wedge\du+\dy\wedge\devu)+v(\du\wedge\dy-\devu\wedge\dx)\bigg)\end{align*}
where the functions $f,f'$ are evaluated in: $$\frac 1{2}\Big\vl\Big\vl U_{(z,w)}\Big\vl\Big\vl^2_{j(z)}=\big\vl\big\vl q_{(z,w)}\big\vl\big\vl^2_{j(z)}=y^4(u^2+v^2) \ .$$
The matrix associated with the complex structure $\i_{(z,w)}: T_{(z,w)}\big(\Hyp\times\C\big)\to T_{(z,w)}\big(\Hyp\times\C\big)$ in the basis $\{\frac{\partial}{\partial x},\frac \partial{\partial y},\frac\partial{\partial u},\frac\partial{\partial v}\}$ is \begin{equation*}
    \i_{(i,w)}=\begin{pmatrix}
    J_0 & 0_{2\times 2} \\ 0_{2\times 2} & J_0
    \end{pmatrix}=\begin{pmatrix}
    0 & -1 & 0 & 0 \\ 1 & 0 & 0 & 0 \\ 0 & 0 & 0 & -1 \\ 0 & 0 & 1 & 0
    \end{pmatrix} \ . 
\end{equation*}
\begin{proposition}\label{prop:Gnondegenereomegachiusa}
For any $(z,w)\in\Hyp\times\C$ we have $$\det\big(\g_f\big)_{(z,w)}\neq 0, \qquad \mathrm d\big(\ome_f\big)_{(z,w)}=0 \ .$$ In particular, $\g_f$ is non-degenerate and $\ome_f$ is closed on $\Hyp\times\C\cong\maximal$.
\end{proposition}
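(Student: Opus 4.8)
The plan is to treat the two assertions separately, working directly from the explicit coordinate matrices for $\g_f$ and $\ome_f$ at a generic point $(z,w)=(x+iy,\,u+iv)\in\Hyp\times\C$ computed above. \emph{Non-degeneracy.} First I would read the displayed $4\times 4$ matrix as a symmetric block matrix
\[
(\g_f)_{(z,w)}=\begin{pmatrix}\alpha\,\mathds{1} & B\\ B^{T} & f'y^{4}\,\mathds{1}\end{pmatrix},\qquad \alpha=\tfrac{1}{y^{2}}\big(1-f+4(u^{2}+v^{2})y^{4}f'\big),\quad B=2f'y^{3}\begin{pmatrix}v & -u\\ u & v\end{pmatrix},
\]
where $f,f'$ are evaluated at $\|q_{(z,w)}\|^{2}_{j(z)}=y^{4}(u^{2}+v^{2})$. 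Since $f'(t)<0$ for all $t\ge 0$, the lower-right block $f'y^{4}\mathds{1}$ is invertible, so I can apply the Schur-complement formula for the determinant. The essential simplification is that $BB^{T}=4(f')^{2}y^{6}(u^{2}+v^{2})\,\mathds{1}$, so the Schur complement is a scalar multiple of the identity; the $f'$-terms then cancel and leave $\alpha\,\mathds{1}-B(f'y^{4}\mathds{1})^{-1}B^{T}=\tfrac{1-f}{y^{2}}\,\mathds{1}$. Consequently
\[
\det(\g_f)_{(z,w)}=(f'y^{4})^{2}\Big(\tfrac{1-f}{y^{2}}\Big)^{2}=(f')^{2}(1-f)^{2}y^{4}.
\]
Because $f'<0$ forces $(f')^{2}>0$, while $f(0)=0$ together with $f'<0$ gives $f\le 0$ and hence $(1-f)^{2}\ge 1$, and $y>0$ on $\Hyp$, this determinant is strictly positive; in particular $\g_f$ is non-degenerate everywhere.

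\emph{Closedness.} For $\mathrm d\ome_f=0$ I would expand $\ome_f$ in the basis $\{\dx,\dy,\du,\devu\}$, record its six coefficients from the displayed formula, and compute the four components of the $3$-form $\mathrm d\ome_f$ along $\dx\wedge\dy\wedge\du$, $\dx\wedge\dy\wedge\devu$, $\dx\wedge\du\wedge\devu$ and $\dy\wedge\du\wedge\devu$. The only input is the chain rule applied to the fact that every coefficient depends on $(y,u,v)$ solely through $s:=y^{4}(u^{2}+v^{2})=\|q_{(z,w)}\|^{2}_{j(z)}$, with $\partial_{x}s=0$, $\partial_{y}s=4y^{3}(u^{2}+v^{2})$, $\partial_{u}s=2y^{4}u$ and $\partial_{v}s=2y^{4}v$; differentiating $f$ or $f'$ therefore produces $f'$- and $f''$-terms. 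I expect each of the four components to vanish identically, with the $f'$-contributions cancelling on their own and the $f''$-contributions cancelling as well, the latter purely because of the quadratic dependence of $s$ on $(u,v)$: in the $\dy\wedge\du\wedge\devu$ component the $f''$-terms combine as $v^{2}+u^{2}-(u^{2}+v^{2})=0$, and in the remaining components they cancel by the symmetry of the second partials of $s$. No assumption on $f$ beyond smoothness enters here, which matches the Remark that the construction is valid for every admissible $f$. Together with the compatibility of $\g_f$ and $\i$ (Proposition \ref{prop:compatible}) and the integrability of $\i$, these two facts complete the proof of Theorem \ref{thm:pseudoKahler}.

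\emph{Main obstacle.} The non-degeneracy is immediate once the block structure is spotted. The real work is the closedness verification: keeping track of the signs among the twelve partial derivatives and, above all, checking that the second-derivative terms cancel in every component. I would organize this by isolating, slot by slot, the coefficient of $f'$ and the coefficient of $f''$ and showing that each is zero, thereby reducing the whole statement to a short list of elementary polynomial identities in $u,v,y$.
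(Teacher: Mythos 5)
Your proposal is correct and takes essentially the same route as the paper: both work directly from the explicit coordinate matrices, compute $\det\big(\g_f\big)_{(z,w)}=y^{4}(f')^{2}(1-f)^{2}$ (you via a Schur complement on the lower-right block, the paper via the commuting-blocks formula $\det(\Theta\Delta-\Xi\Gamma)$ --- the same elementary manipulation, yielding the identical value), and establish closedness by checking each of the four coefficients of $\mathrm d\ome_f$, with the $f'$-terms and the $f''$-terms cancelling separately. The cancellation mechanism you predict (e.g.\ the $u^{2}+v^{2}-(u^{2}+v^{2})=0$ pattern in the $\dy\wedge\du\wedge\devu$ component, and the fact that no hypothesis on $f$ beyond smoothness is needed for closedness) is exactly what occurs in the paper's computation.
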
\begin{proof}
The tensor $\g_f$ can be written as: $$(\g_f)_{(z,w)}=\begin{pmatrix}
\Theta & \Xi \\ \Gamma & \Delta
\end{pmatrix}$$where $\Theta,\Xi,\Gamma,\Delta$ are $2\times 2$ matrices with $$\Theta=\frac 1{y^2}\big(1-f+4y^4(u^2+v^2)f'\big)\dsone_{2\times 2}, \qquad \Delta=y^4f'\dsone_{2\times 2} \ .$$
Hence, $\Xi$ and $\Gamma$ both commute with $\Theta$ and $\Delta$. In this case, we get an easy formula for the determinant, namely $\det\Big((\g_f)_{(z,w)}\Big)=\det(\Theta\Delta-\Xi\Gamma)$, where $$\Theta\Delta=y^2f'\Big(1-f'+4y^4f'(u^2+v^2)\Big)\dsone_{2\times 2}, \qquad \Xi\Gamma=4y^6(f')^2(u^2+v^2)\dsone_{2\times 2}$$which gives $$\det\Big((\g_f)_{(z,w)}\Big)=y^4(f')^2(1-f)^2 \ . $$
The right hand side of the last equation is always non-zero thanks to the property of the function $f$, hence $(\g_f)_{(z,w)}$ is non-degenerate at each point $(z,w)\in\Hyp\times\C$.\newline It only remains to prove that $(\mathrm{d}\ome_f)_{(z,w)}=0$ for each $(z,w)\in\Hyp\times\C$. By using directly the expression in coordinate, we get: \begin{itemize}
    \item Coefficient $\dy\wedge\du\wedge\devu$:\begin{align*}
         &-4y^3f'\dy\wedge\du\wedge\devu-4y^7f''(u^2+v^2)\dy\wedge\du\wedge\devu-4y^7f''u^2\du\wedge\dy\wedge\devu \\& -2y^3f'\du\wedge\dy\wedge\devu-4y^7f''v^2\devu\wedge\du\wedge\dy-2y^3f'\devu\wedge\du\wedge\dy=0
    \end{align*} \item Coefficient $\dx\wedge\du\wedge\devu$:\begin{equation*}
-4y^7f''uv\devu\wedge\dx\wedge\du+4y^7f''uv\du\wedge\devu\wedge\dx=0
    \end{equation*}\item Coefficient $\dx\wedge\dy\wedge\devu$:\begin{align*}
        &2y^2f'v\devu\wedge\dx\wedge\dy-6y^6f''(u^2+v^2)v\devu\wedge\dx\wedge\dy-6y^2f'v\devu\wedge\dx\wedge\dy \\ &+6y^2f'v\dy\wedge\devu\wedge\dx+6y^6f''v(u^2+v^2)\dy\wedge\devu\wedge\dx=0
    \end{align*}
    \item Coefficient $\dx\wedge\dy\wedge\du$: \begin{align*}
        &2y^2f'u\du\wedge\dx\wedge\dy-6y^6f'u(u^2+v^2)\du\wedge\dx\wedge\dy-6y^2f'u\du\wedge\dx\wedge\dy \\ &-6y^2f'u\dy\wedge\dx\wedge\du-6y^6f''(u^2+v^2)u\dy\wedge\dx\wedge\du=0
    \end{align*}
\end{itemize}
\end{proof}
\begin{remark}
At this point, once it is shown that $\g_f$ is non-degenerate, it is easily seen that it has neutral signature $(2,2)$. In fact, it restricts to a positive definite metric on the copy of the hyperbolic plane $\Hyp\times\{0\}\subset\Hyp\times\C$ and to a negative definite metric on the copy of the fibre $\{i\}\times\C\subset\Hyp\times\C$.
\end{remark}

\section{Hamiltonian actions and moment maps}
\noindent In this section we introduce two Hamiltonian actions on $\mathcal{MF}(T^{2})$ that will endow the space of complete equivariant flat maximal surfaces in $\mathbb{H}^{2,2}$ with the structure of a completely integrable system with complete fibers. This will allow us to find global Darboux coordinates for $\ome_{f}$ that have interesting geometric meanings. \\

\noindent Let us first recall some basic definitions regarding symplectic actions and moment maps.
\begin{definition}
Let $(M,\omega)$ be a symplectic manifold and let $G$ be a Lie group acting on $M$. Let $\psi_g:M\to M$ be the map $\psi_g(p):=g\cdot p$, then we say the group $G$ acts by symplectomorphisms on $(M,\omega)$ if $\psi_g^*\omega=\omega$ for all $g\in G$.
\end{definition}  
\begin{definition}\label{def:momentmap}
Let $G$ be a Lie group, with Lie algebra $\mathfrak g$, acting on a symplectic manifold $(M,\omega)$ by symplectomorphisms. We say the action is \emph{Hamiltonian} if there exists a smooth function $\mu:M\to\mathfrak g^*$ satisfying the following properties: \begin{itemize}
    \item[(i)] The function $\mu$ is equivariant with respect to the $G$-action on $M$ and the co-adjoint action on $\mathfrak g^*$, namely \begin{equation}
        \mu_{g\cdot p}=\Ad^*(g)(\mu_p):=\mu_p\circ \Ad(g^{-1})\in\mathfrak g^* \ .
    \end{equation}
    \item[(ii)]Given $\xi\in\mathfrak g$, let $X_\xi$ be the vector field on $M$ generating the action of the $1$-parameter subgroup generated by $\xi$, i.e. $X_\xi=\frac{\mathrm d}{\mathrm dt}\text{exp}(t\xi)\cdot p |_{t=0}$. Then, for every $\xi\in\mathfrak g$ we have\begin{equation}
        \mathrm d\mu^{\xi}=\iota_{X_\xi}\omega=\omega(X_\xi,\cdot)
    \end{equation}where $\mu^\xi:M\to\R$ is the function $\mu^\xi(p):=\mu_p(\xi)$.
\end{itemize}A map $\mu$ satisfying the two properties above is called a \emph{moment map} for the Hamiltonian action.
\end{definition}

\subsection{The circle action}\label{sec:circleaction}
The space of holomorphic quartic differentials on $T^{2}$ has a natural circle action given by 
\begin{align*}
    S^{1}\times \mathcal{Q}^{4}(T^2) &\rightarrow \mathcal{Q}^{4}(T^2) \\
    (\theta, J, q) &\mapsto (J, e^{-i\theta}q) \ .
\end{align*}
By taking the real part of $q$, we have an induced circle action on $\mathcal{M}(\R^{2})$
\begin{align*}
    \Psi_{\theta}: \mathcal{M}(\R^2) &\rightarrow \mathcal{M}(\R^2) \\
    (J, U) &\mapsto (J, \cos(\theta)U-\sin(\theta)UJ) \ .
\end{align*}

\begin{theorem} The function 
\[
    H(J,U)=\frac{1}{2}f\left(\frac{\|U\|^{2}}{2}\right) \ 
\]
is a Hamiltonian for the circle action on $(\mathcal{M}(\R^{2}), \ome_{f})$. Moreover, $\Phi_{\theta}^{*}\g_{f}=\g_{f}$.
\end{theorem}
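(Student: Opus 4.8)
The plan is to carry out everything in the global coordinates $(z,w)=(x+iy,u+iv)$ on $\Hyp\times\C\cong\maximal$ furnished by Lemma \ref{lem:Trautweinfibremap}, in which $\ome_f$, $\g_f$ and $\i$ have already been written out explicitly, and in which $\tfrac12\|U\|^2=y^4(u^2+v^2)$.

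First I would pin down the circle action and its generator in these coordinates. Since the fibre map $\varphi(z,w)$ depends on $w$ through $\bar w$ and the action on quartic differentials is $q\mapsto e^{-i\theta}q$, the induced map $\Psi_\theta$ is simply the fibre rotation $(z,w)\mapsto(z,e^{i\theta}w)$; one verifies at the base point $z=i$ that the relations $U_1J=U_2$ and $U_2J=-U_1$ make this agree with the stated formula $\Psi_\theta(J,U)=(J,\cos\theta\,U-\sin\theta\,UJ)$. Differentiating at $\theta=0$ then yields the generating vector field $X=u\partial_v-v\partial_u$, i.e.\ the real form of $\dot w=iw$. As $\Psi_\theta$ manifestly maps $\Hyp\times\C$ to itself, $X$ is automatically tangent to $\maximal$.

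The Hamiltonian identity $\mathrm dH=\iota_X\ome_f$ is then a direct contraction. Writing $H=\tfrac12 f\!\left(y^4(u^2+v^2)\right)$ gives
\[
\mathrm dH=2y^3(u^2+v^2)f'\,\dy+y^4 u f'\,\du+y^4 v f'\,\devu ,
\]
with $f'$ evaluated at $y^4(u^2+v^2)$. Contracting $X$ into the coordinate expression of $\ome_f$, the two $\dx$-contributions $\pm2y^3f'uv\,\dx$ (coming from the $\dx\wedge\du$ and $\devu\wedge\dx$ summands) cancel, the $\dy$-contributions combine to $2y^3f'(u^2+v^2)\,\dy$, and the $\du\wedge\devu$ summand produces $y^4f'(u\,\du+v\,\devu)$; the result is exactly $\mathrm dH$. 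This coordinate bookkeeping is the one step that needs genuine care, but it is purely mechanical given the explicit formula for $\ome_f$.

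For the isometry statement I would argue structurally rather than recompute. The fibre rotation $w\mapsto e^{i\theta}w$ is holomorphic and fixes the base, so it commutes with the complex structure $\i$ (on $\quartic$ it is scalar multiplication by $e^{-i\theta}$, which commutes with multiplication by $-i=\i$ on the fibres). By the previous step $X$ is Hamiltonian with potential $H$, so Cartan's formula together with the closedness of $\ome_f$ (Proposition \ref{prop:Gnondegenereomegachiusa}) gives $\mathcal L_X\ome_f=\mathrm d\,\iota_X\ome_f+\iota_X\,\mathrm d\ome_f=\mathrm d(\mathrm dH)+0=0$; since $\Psi_\theta$ is precisely the time-$\theta$ flow of $X$, this yields $\Psi_\theta^*\ome_f=\ome_f$. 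Finally, from the compatibility relation $\g_f(\cdot,\cdot)=-\ome_f(\cdot,\i\cdot)$ (Proposition \ref{prop:compatible}) and the fact that $\Psi_\theta$ preserves both $\ome_f$ and $\i$, it follows that $\Psi_\theta^*\g_f=\g_f$, which simultaneously exhibits each $\Psi_\theta$ as a holomorphic symplectic isometry.
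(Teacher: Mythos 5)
Your proof is correct, but it takes a genuinely different route from the paper's on both halves of the statement. For the Hamiltonian identity, the paper works intrinsically: the generator is $(0,-UJ)$, one computes $\iota_X\ome_f(\dot J,\dot U)=\g_f\big((\dot J,\dot U),(0,U)\big)=\tfrac{f'}{2}\langle\dot U_0,U\rangle$, and matches this with $\mathrm dH$ via the identity $(\|U\|_J^2)'=2\langle U,\dot U_0\rangle$, proved using $\langle U,\dot U_{\mathrm{tr}}\rangle=0$ and the trace identities of Lemma \ref{lem:dotJ}; you instead contract the coordinate generator $u\partial_v-v\partial_u$ into the explicit expression of $\ome_f$ from Section \ref{sec:coordinatedescription}, and your bookkeeping (cancellation of the $\dx$ terms, the $\dy$ and $\du,\devu$ terms recombining into $\mathrm dH$) checks out. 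For the isometry statement, the paper computes $\mathrm d\Psi_\theta(\dot J,\dot U)=(\dot J,\cos\theta\,\dot U+\sin\theta\,\i(\dot U))$ and expands $\Psi_\theta^*\g_f$ directly, using $\i(\dot U)_0=-\dot U_0J$, $\i(\dot U)_{\mathrm{tr}}=-\dot U_{\mathrm{tr}}(\cdot,J\cdot)$ and the vanishing of the cross term; you instead obtain $\Psi_\theta^*\ome_f=\ome_f$ for free from Cartan's formula together with the closedness of $\ome_f$ (Proposition \ref{prop:Gnondegenereomegachiusa}, which is proved before this theorem, so there is no circularity), and then transfer invariance to $\g_f$ via $\g_f=-\ome_f(\cdot,\i\cdot)$ and commutation with $\i$. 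Your route is shorter and avoids the tensor algebra entirely, at the price of leaning on the coordinate formulas; the paper's intrinsic computation is coordinate-free and establishes the identity $(\|U\|^2)'=2\langle U,\dot U_0\rangle$, which is reused later in the proof of the $\SL(2,\R)$ moment map (Proposition \ref{prop:momentmapSL2R}). Two small points to tighten: first, the relation $\g_f=-\ome_f(\cdot,\i\cdot)$ follows from the definition $\ome_f:=\g_f(\cdot,\i\cdot)$ together with $\i^2=-\Id$, not from Proposition \ref{prop:compatible}; second, your parenthetical justification that $\Psi_\theta$ commutes with $\i$ only addresses the fibre directions, and to make it airtight you should note that the paper's matrix for $\i$ at $(i,w)$ is $J_0\oplus J_0$, that $\mathrm d\Psi_\theta=\Id\oplus R_\theta$ visibly commutes with it, and that the case of a general point $(z,w)$ follows by $\SL(2,\R)$-equivariance, since the circle action commutes with the $\SL(2,\R)$-action and $\i$ is $\SL(2,\R)$-invariant.
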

\begin{proof}
The infinitesimal generator of the circle action is
\[
    X_{(J,U)} = \frac{d}{d\theta}_{\big|_{\theta=0}}\Psi_{\theta}(J,U)=(0,-UJ) \ . 
\]  
Now, at a point $(J,U) \in \mathcal{MF}(T^{2})$, we have
\begin{align*}
    \mathcal{L}_{X}\ome_{f}((\dot{J}, \dot{U})) &= \ome_{f}( (\dot{J}, \dot{U}) , (0,UJ)) 
    = \g_{f}((\dot{J}, \dot{U}) , \i(0,UJ)) \\
    &= \g_{f}((\dot{J}, \dot{U}) , (0,-UJ^{2})) 
    = \g_{f}((\dot{J}, \dot{U}) , (0,U)) \\
    &= \frac{f'}{2} \langle \dot{U}_{0}, U \rangle
\end{align*}
where the function $f'$ is evaluated at $\|U\|^{2}/2$. It remains to check that this last expression coincides with the differential of $H$. To this aim, we first compute the derivative of the norm of $U$ in the direction of $(\dot{J}, \dot{U})$:
\begin{align*}
    (\|U\|_{J}^{2})'&= 2\langle U, U' \rangle \\ 
    &= 2\langle U, \dot{U} \rangle + 2\langle U, J\dot{J}U \rangle \tag{$\langle U,\dot{U}_{tr}\rangle=0$} \\
    &= 2\langle U, \dot{U}_{0}\rangle +2\langle U, J\dot{J}U\rangle \\
    & = 2\langle U, \dot{U}_{0}\rangle 
\end{align*}
because 
\[
    2\langle U, J\dot{J}U\rangle = \trace(U_{1}J\dot{J}U_{1})+\trace(U_{2}J\dot{J}U_{2})
\]
and both traces vanish by Lemma \ref{lem:dotJ}. As a consequence, 
\[
    \mathrm dH_{(J,U)}(\dot{J},\dot{U}) = \frac{1}{2}f'\left(\frac{\|U\|^{2}}{2}\right) \cdot \frac{1}{2} (\|U\|_{J}^{2})' = \frac{1}{2}f'\left(\frac{\|U\|^{2}}{2}\right) \langle U, \dot{U}_{0} \rangle
\]
so the function $H$ is an Hamiltonian for the circle action.\\
Let us now prove that $\Psi_{\theta}$ preserves the metric. First we note that the differential of $\Psi_{\theta}$ at a point $(J,U)\in \mathcal{M}(\R^{2})$ is
\begin{align*}
    \mathrm d\Psi_{\theta}(\dot{J}, \dot{U}) &= (\dot{J}, \cos(\theta)\dot{U}-\sin(\theta)(\dot{U}J+U\dot{J})) \\
        &=(\dot{J}, \cos(\theta)\dot{U}+\sin(\theta)\i(\dot{U}))
\end{align*}
and then we compute
\begin{align*}
    (\Psi_{\theta}^{*})\g_{f}((\dot{J}, \dot{U}), &(\dot{J}',\dot{U}')) = \g_{f}(\mathrm d\Psi_{\theta}(\dot{J}, \dot{U}), \mathrm d\Psi_{\theta}(\dot{J}',\dot{U}')) \\
    &=\g_{f}((\dot{J}, \cos(\theta)\dot{U}+\sin(\theta)\i(\dot{U})), (\dot{J}', \cos(\theta)\dot{U}'+\sin(\theta)\i(\dot{U}'))) \\
    &= (1-f)\langle \dot{J}, \dot{J}' \rangle +\frac{f'}{2}\Bigg[\cos^{2}(\theta)\bigg(\langle \dot{U}_{0}, \dot{U}_{0}' \rangle - \langle \dot{U}_{tr}, \dot{U}_{tr}' \rangle\bigg) \\
    & \ \  +\sin^{2}(\theta)\bigg(\langle \i(\dot{U})_{0}, \i(\dot{U}')_{0} \rangle - \langle \i(\dot{U})_{tr}, \i(\dot{U}')_{tr} \rangle \bigg) \\
    & \ \  +\cos(\theta)\sin(\theta)\bigg( \langle \i(\dot{U})_{0}, \dot{U}_{0}' \rangle + \langle \dot{U}_{0}, \i(\dot{U}')_{0} \rangle - \langle \i(\dot{U})_{tr}, \dot{U}_{tr}' \rangle - \langle \dot{U}_{tr}, \i(\dot{U}')_{tr}  \rangle  \bigg) \Bigg] \ .
\end{align*}
We already observed that $\i(\dot{U})_{0}=-\dot{U}_{0}J$ and $\langle \dot{U}_{0}J, \dot{U}_{0}'J\rangle = \langle \dot{U}_{0}, \dot{U}_{0}' \rangle$. Similarly, $\i(\dot{U})_{tr}=-\dot{U}_{tr}(\cdot, J\cdot)$ and $\langle \dot{U}_{tr}(\cdot, J\cdot), \dot{U}_{tr}'(\cdot, J\cdot) \rangle = \langle \dot{U}_{tr}, \dot{U}_{tr}' \rangle$. Therefore, in order to complete the proof, it is sufficient to check that the term multiplied by $\cos(\theta)\sin(\theta)$ vanishes. This follows from a direct computation similar to Proposition \ref{prop:compatible}. 
\end{proof}
\subsection{The \texorpdfstring{$\SL(2,\R)$}{SL(2,R)}-action}\label{sec:SLaction}
In Section \ref{sec:vector_bundle}, we introduced an $\SL(2,\R)$-action on $\mathcal{M}(\R^{2})$ (see expression \ref{eq:SLactiononU}). We now prove that it is Hamiltonian for the symplectic forms $\ome_{f}$.

\begin{proposition}\label{prop:momentmapSL2R} The $\mathrm{SL}(2,\R)$-action on $\mathcal{M}(\R^{2})$ is Hamiltonian with moment map
\[
    \mu_{X}(J,U)=(1-f)\trace(JX)
\]
where $f$ is evaluated at $\|U\|^{2}/2$ and $X\in \mathfrak{sl}(2,\R)$.    
\end{proposition}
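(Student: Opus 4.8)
The plan is to verify the two conditions of Definition~\ref{def:momentmap}; the action is already symplectic by the $\SL(2,\R)$-invariance of $\ome_f$ recorded after \eqref{eq:pseudoriemannianmetric}. For $\xi\in\mathfrak{sl}(2,\R)$, differentiating $A\cdot J=AJA^{-1}$ and the action \eqref{eq:SLactiononU} at $A=\exp(t\xi)$ gives the infinitesimal generator
\[
    X_\xi=\big([\xi,J],\,\dot U_\xi\big),\qquad \dot U_\xi(Y,Z)=\xi\,U(Y,Z)-U(\xi Y,Z)-U(Y,\xi Z)-U(Y,Z)\xi,
\]
which is automatically tangent to $\maximal$ because the action preserves $\maximal$. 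Equivariance (property (i)) is then immediate: since $\|U\|^2$ is $\SL(2,\R)$-invariant,
\[
    \mu_{A\cdot(J,U)}(\xi)=(1-f)\tr\!\big(AJA^{-1}\xi\big)=(1-f)\tr\!\big(J\,\Ad(A^{-1})\xi\big)=\mu_{(J,U)}\big(\Ad(A^{-1})\xi\big),
\]
which is exactly the co-adjoint equivariance.

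For the infinitesimal condition $d\mu^\xi=\iota_{X_\xi}\ome_f$ I would differentiate $\mu^\xi=(1-f)\tr(J\xi)$. Using the identity $\delta\|U\|^2=2\langle U,\dot U_0\rangle$ already established in the circle-action computation, one gets
\[
    d\mu^\xi(\dot J,\dot U)=-f'\,\langle U,\dot U_0\rangle\,\tr(J\xi)+(1-f)\,\tr(\dot J\xi).
\]
Evaluating $\ome_f$ with first slot $X_\xi$, the base contribution matches on the nose thanks to the identity $\langle[\xi,J],J\dot J\rangle=-\tr(\xi\dot J)$ — which follows from $J\dot J J=\dot J$ for $\dot J\in T_J\almost$ — turning the $\langle\dot J,J\dot J'\rangle$-term of $\ome_f$ into $(1-f)\tr(\dot J\xi)$. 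Thus the whole problem reduces to matching the fiber terms, namely to proving
\[
    2\langle U,\dot U_0\rangle\,\tr(J\xi)=\langle(\dot U_\xi)_0,\dot U_0 J\rangle-\langle(\dot U_\xi)_{\text{tr}},\dot U_{\text{tr}}(\cdot,J\cdot)\rangle .
\]

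To settle this I would split $\xi=\alpha J+S$ with $S\in T_J\almost$ at the given point. For $\xi=J$ properties (iii)--(iv) of Lemma~\ref{lem:propertiesUeT} give $\dot U_J=-4UJ$, so $X_J=(0,-4UJ)$ is four times the circle generator; since $\tr(J^2)=-2$, $(\dot U_J)_{\text{tr}}=0$ and $\langle UJ,\dot U_0 J\rangle=\langle U,\dot U_0\rangle$, the identity reduces to the circle-action computation already performed (consistently with its Hamiltonian $H=\tfrac12 f$). For the transverse part $S\in T_J\almost$ one has $\tr(JS)=0$, so the left-hand side vanishes and it remains to prove that $\langle(\dot U_S)_0,\dot U_0 J\rangle-\langle(\dot U_S)_{\text{tr}},\dot U_{\text{tr}}(\cdot,J\cdot)\rangle=0$. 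I expect this transverse vanishing to be the main obstacle, since $[S,J]\neq0$ forces the correction term $C(J,U,[S,J])$ of Proposition~\ref{prop:decompositiondotU} into the full-trace-free part of $\dot U_S$, which must be tracked carefully. The safest way to dispatch it — and simultaneously to cross-check the entire equation — is the explicit coordinate description of Section~\ref{sec:coordinatedescription}: compute $d\mu^\xi$ and $\iota_{X_\xi}\ome_f$ on $\Hyp\times\C$ against the recorded matrix form of $\ome_f$ for a basis of $\mathfrak{sl}(2,\R)$, whose generators are the vector fields $\dot z=1$; $\dot z=-z^2,\ \dot w=4zw$; and $\dot z=2z,\ \dot w=-4w$. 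By the equivariance just proved it suffices to carry this out at the basepoint $z=i$.
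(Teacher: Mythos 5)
Your plan follows the same route as the paper's own proof: equivariance from the $\SL(2,\R)$-invariance of $\|U\|^2_J$ and of the trace pairing; differentiation of $\mu^\xi$ using $\delta\|U\|^2=2\langle U,\dot U_0\rangle$; matching of the base terms via $\langle[\xi,J],J\dot J\rangle=-\tr(\xi\dot J)$; splitting of $\xi$ into its $g_J$-skew part (a multiple of $J$, handled by the circle-action computation, since $\xi^a=-\tfrac12\tr(J\xi)J$) and its $g_J$-symmetric part $S$; and a final verification of the symmetric part in the coordinates of Section~\ref{sec:coordinatedescription} at the basepoint $z=i$, invoking invariance. This is exactly how the paper proceeds: its equation (\ref{lastequation}) is likewise checked only for the symmetric basis elements $\xi_2,\xi_3$ at $(i,w)$, and your coordinate expressions for the three generators on $\Hyp\times\C$ are correct.

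There is, however, one genuine error, and it is precisely the pitfall flagged in Remark~\ref{rem:decompositionUdot}. Differentiating the action (\ref{eq:SLactiononU}) on $U$ gives the \emph{total} variation $\delta U_\xi=\xi U-U(\xi\cdot,\cdot)-U(\cdot,\xi\cdot)-U\xi$, whereas all the formulas for $\g_f$, $\ome_f$, $\i$ take tangent vectors in the representation $(\dot J,\dot U)$ with $\dot U=g_J^{-1}\dot T=\delta U-J\dot JU$; since $J[\xi,J]=2\xi^s$, the two differ by $2\xi^sU$. The paper's generator has fibre component $\widetilde U_\xi=-U(\xi\cdot,\cdot)-U(\cdot,\xi\cdot)-U\xi-\xi^*U$ (with $\xi^*$ the $g_J$-adjoint), whose trace part $-(U\xi^s+\xi^sU)=-\tr(U\xi^s)\dsone$ is exactly what produces the right-hand side of (\ref{lastequation}). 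Your $\delta U_\xi$, by contrast, is endomorphism-trace-free (differentiate $\tr U(\cdot,\cdot)\equiv 0$ along the orbit), so for $\xi^s\neq 0$ it violates the tangency condition $\tr\dot U(X,Y)=-\tr(J\dot JU(X,Y))$ of Proposition~\ref{prop:decompositiondotU} and is not a legitimate input for the formula of $\ome_f$: with it, your "transverse vanishing" reduces to $\langle(\delta U_S)_0,\dot U_0J\rangle=0$, which is not the correct identity — it differs from (\ref{lastequation}) by the terms coming from $J[S,J]U=2SU$, and chasing it at the tensor level would not reproduce $d\mu^\xi$. This only contaminates the tensor-level reduction: your final fallback — verifying $d\mu^\xi=\iota_{X_\xi}\ome_f$ directly on $\Hyp\times\C$ at $z=i$ for the three coordinate generators, then propagating by equivariance — is immune to the representation issue and, together with your equivariance argument, does constitute a complete proof. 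So either carry out that coordinate check as the actual argument, or first correct the generator to $\delta U_\xi-J[\xi,J]U$ before contracting with $\ome_f$.
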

\begin{proof} We start by computing the infinitesimal generator of this action for a given $X\in\Lsl(2,\R)$ 
\[
V_X(J,U)=\frac{\mathrm d}{\mathrm dt}(e^{tX}Je^{-tX}, (e^{-tX})^{*}T)|_{t=0}
\]
The first component is clearly equal to $XJ-JX=[X,J]$. For the second component, define $P_t:=e^{tX}$. We need to compute  
\begin{align*}
    \frac{\mathrm d}{\mathrm dt}T((P_t)^{-1}\cdot,(P_t)^{-1}\cdot,&(P_t)^{-1}\cdot, (P_t)^{-1}\cdot)|_{t=0}= \\
    &-T(X\cdot,\cdot,\cdot, \cdot)-T(\cdot,X\cdot,\cdot,\cdot)-T(\cdot,\cdot,X\cdot,\cdot)-T(\cdot, \cdot, \cdot, X\cdot) \ . 
     \end{align*}
     If $\widetilde T(\cdot,\cdot,\cdot,\cdot)$ is defined as the right hand side term of the equation above, then the associated $(3,1)$-tensor $\widetilde U=g_{J}^{-1}\widetilde{T}$ satisfies \begin{align*}
         g_J(\widetilde U(Y,Z)W,R)&=\widetilde T(Y,Z,W,R) \\ &=-T(XY,Z,W,R)-T(Y,XZ,W,R)-T(Y,Z,XW,R)-T(Y,Z,W,XR) \\ &=-g_J(U(XY,Z)W,R)-g_J(U(Y, XZ)W, R)-g_J(U(Y,Z)XW,R)+ \\
         & \ \ \ -g_{J}(U(Y,Z)W, XR)\\ &=-g_J(U(XY,Z)W+U(Y,XZ)W+U(Y,Z)XW+X^*U(Y,Z)W, R)
     \end{align*}
     for all $Y,Z,W,R\in\Gamma(T\R^2)$, where $X^*$ denotes the adjoint of $X$ with respect to $g_J$. Hence, we have 
     \[
     \widetilde U(\cdot,\cdot)=-U(X\cdot, \cdot)-U(\cdot, X\cdot)-UX-X^*U \ .
     \]
     By using the decomposition $X=X^s+X^a$ in its symmetric and skew-symmetric part, we can write the second component of $V_X(J,U)$ as: 
     \begin{equation}\label{decompositiontracepart}
     \underbrace{-U(X^a\cdot,\cdot)-U(X^s\cdot,\cdot)-U(\cdot,X^{a}\cdot)-U(\cdot, X^{s}\cdot)+[X^a,U]}_{\text{trace-less part}}-\underbrace{(UX^s+X^sU)}_{\text{trace part}} \ . 
     \end{equation}
     We now need to check that the function $\mu_{X}$ in the statement satisfies the properties of a moment map, as in Definition \ref{def:momentmap}. The equivariance (Property (i)) is a direct consequence of Lemma \ref{eq:SLactiononU}: if $A\in\SL(2,\R)$ and $X\in\Lsl(2,\R)$, then \begin{align*}
         \mu_{A\cdot (J,U)}(X) &=\bigg(1-f\bigg(\frac 1{2}\vl\vl A\cdot U\vl\vl_{A\cdot J}^2\bigg)\bigg)\tr(AJA^{-1}X) \\ &=\bigg(1-f\bigg(\frac 1{2}\vl\vl U\vl\vl^{2}_{J} \bigg)\bigg)\tr(JA^{-1}XA) \\ &=\mu_{(J,U)}\circ\Ad(A^{-1})(X) \\ &=\Ad^*(A)(\mu_{(J,U)})(X) \ .
     \end{align*} \\
     Let us now move to Property (ii). Let $\mu^X:\maximal\to\R$ be the map $$\mu^X(J,U)=\Big(1-f\bigg(\frac{\| U\|^2_J}{2}\bigg)\Big)\tr(JX) \ , $$ then 
     \begin{align*}
         \mathrm{d}_{(J,U)}\mu^X(\dot J,\dot U)&=-\frac {f'}{2}\bigg(\vl\vl U\vl\vl^2_J\bigg)'\tr(JX)+(1-f)\tr(\dot JX) \\ 
         &=-f'\langle U,\dot U_0\rangle\tr(JX)+(1-f)\tr(\dot JX) \ .
     \end{align*}
    Now let $V_X$ be the infinitesimal generator of the action, then 
    \begin{equation}\label{iotaomega}
    \begin{aligned}
         \iota_{V_X}\ome_f(\dot J,\dot U)&=\g_{f}(V_X(J,U), \i(\dot J,\dot U)) \\
         &=\underbrace{\frac{f-1}{2}\tr([X,J]J\dot J)}_{(i)}+\frac{f'}{2}\bigg[\underbrace{\langle [X^a, U], -\dot{U}_{0}J\rangle}_{(ii)}  \\ 
         &\ \ \ +\underbrace{\langle -U(X^{a}\cdot, \cdot)-U(\cdot, X^{a}\cdot), -\dot{U}_{0}J \rangle}_{(iii)} + \\
         &\ \ \ - \langle U(X^{s}\cdot, \cdot)-U(\cdot, X^{s}\cdot), -\dot{U}_{0}J \rangle - \langle -(UX^{s}+X^{s}U), -\dot{U}_{tr}(\cdot, J\cdot)\rangle \bigg]
     \end{aligned}\end{equation}
     where we used the decomposition in (\ref{decompositiontracepart}). The term $(i)$ is
     \begin{align*}
         \frac{1-f}{2}\tr(\dot JX+JXJ\dot J)=(1-f)\tr(\dot J X)
     \end{align*}
     by trace symmetry and $\dot J J+J\dot J=0$. Using the fact that $X^{a}=-\frac{1}{2}\trace(JX)J$ and the properties of the tensor $U$ in Lemma \ref{lem:propertiesUeT}, the sum of terms $(ii)$ and $(iii)$ can be further simplified into
     \begin{align*}
         (ii)+(iii) &= -\frac{f'}{4}\trace(JX) \bigg[ 2\langle UJ, \dot{U}_{0}J\rangle + 2\langle UJ, \dot{U}_{0}J \rangle \bigg] \\
         & = -f'\trace(JX)\langle U, \dot{U}_{0} \rangle \ .
     \end{align*}
     It only remains to show that
     \begin{equation}\label{lastequation}
      \langle U(X^s\cdot, \cdot)+U(\cdot, X^{s}\cdot),  \dot{U}_{0}J \rangle = \langle UX^s+X^sU, \dot{U}_{tr}(\cdot, \cdot) \rangle  
     \end{equation}
     To do so, we will use a basis of $\Lsl(2,\R)$, namely we can write $\Lsl(2,\R)=\Span_{\R}(\xi_1,\xi_2,\xi_3\}$ where $$\xi_1=J_0,\qquad\xi_2=\begin{pmatrix}
     1 & 0 \\ 0 &-1
     \end{pmatrix},\qquad \xi_3=\begin{pmatrix}
     0 & 1 \\ 1 & 0
     \end{pmatrix} \ . $$
     The only symmetric matrices of this basis are $\xi_2$ and $\xi_3$, hence it is sufficient to prove equation (\ref{lastequation}) when $X^s=\xi_2$ and $X^s=\xi_3$, since all the elements are linear in $X\in\Lsl(2,\R)$. In both cases we use the description in coordinates $z=x+iy$ and $w=u+iv$ for points in $\mathcal{M}(\R^{2})$ and their variations, introduced in Section \ref{sec:coordinatedescription}. In particular we can do the computation in $(z,w)=(i,w)$ by $\SL(2,\R)$-invariance. We denote by $\{e_{1}, e_{2}\}$ a $g_{J}$-orthonormal basis and by $\{e_{1}^{*}, e_{2}^{*}\}$ its dual. Moreover, in order to simplify the notation, we put $e_{ij}^{*}:=e_{i}^{*}\otimes e_{j}^{*}$.
     \begin{itemize}
         \item Consider $X^s=\xi_2$. In this case $X^s e_{1}=e_{1}$ and $X^se_{2}=-e_{2}$, hence 
         \begin{align*}
             U(X^s\cdot, \cdot)&=(e_{11}^{*}+e_{22}^{*})U_1+(e_{12}^{*}-e_{21}^{*})U_2 \\
             U(\cdot, X^{s}\cdot)&=(e_{11}^{*}+e_{22}^{*})U_1+(-e_{12}^{*}+e_{21}^{*})U_2 \ .
         \end{align*}
         Therefore, 
         \begin{align*}
             \langle U(X^{s}\cdot,\cdot)+U(\cdot, X^{s}\cdot), \dot{U}_{0}J \rangle &= \frac{1}{2} \bigg[ \trace((g_{J}^{-1}e_{11}^{*})^{2})\trace(U_{1}(\dot{U}_{1})_{0}J) + \\
             & \ \ \ +\trace((g_{J}^{-1}e_{22}^{*})^{2})\trace(U_{1}(2EJ-(\dot{U}_{1})_{0}J)) \bigg] \\
             & = \frac{1}{2} \bigg[\trace(U_{1}(\dot{U}_{1})_{0}J)+2\trace(U_{1}EJ)-\trace(U_{1}(\dot{U}_{1})_{0}J) \bigg] \\
             & = \trace(U_{1}EJ) = 2|w|^{2}\dot{x} \ .
         \end{align*}
         On the other hand,
         \begin{align*}
             UX^{s}+X^{s}U &= (e_{11}^{*}-e_{22}^{*})(U_{1}X^{s}+X^{s}U_{1})+(e_{12}^{*}+e_{21}^{*})(U_{2}X^{s}+X^{s}U_{2}) \\
             \dot{U}_{tr}(\cdot, J\cdot) &= \frac{1}{2}(e_{11}^{*}-e_{22}^{*})\trace(\dot{U}_{2})\mathds{1} -\frac{1}{2}(e_{12}^{*}+e_{21}^{*})\trace(\dot{U}_{1})\mathds{1} \ ,
         \end{align*}
         thus
         \begin{align*}
             \langle UX^{s}+X^{s}U, \dot{U}_{tr}(\cdot, J\cdot) \rangle &= \frac{1}{4}\bigg[ \frac{1}{2}\trace(\dot{U}_{2})\trace((g_{J}^{-1}(e_{11}^{*}-e_{22}^{*}))^{2})\trace(U_{1}X^{s}+X^{s}U_{1}) + \\
             & \ \ \ -\frac{1}{2}\trace(\dot{U}_{1})\trace((g_{J}^{-1}(e_{11}^{*}+e_{22}^{*}))^{2})\trace(U_{2}X^{s}+X^{s}U_{2}) \bigg] \\
             &= \frac{1}{2}[ \trace(\dot{U}_{2})\trace(U_{1}X^{s}) - \trace(\dot{U}_{1})\trace(U_{2}X^{s})] \\ 
             &=\frac{1}{2}[4(u\dot{x}-v\dot{y})u + 2(u\dot{y}+v\dot{x})v] = 2|w|^{2}\dot{x} \ . 
         \end{align*}
         \item If $X_{s}=\xi_{3}$, we have instead $X_{s}e_{1}=e_{2}$ and $X_{s}e_{2}=e_{1}$, hence
         \begin{align*}
             U(X^s\cdot, \cdot)&=(e_{11}^{*}+e_{22}^{*})U_2+(-e_{12}^{*}+e_{21}^{*})U_1 \\
             U(\cdot, X^{s}\cdot)&=(e_{11}^{*}+e_{22}^{*})U_2+(e_{12}^{*}-e_{21}^{*})U_1 \ .
         \end{align*}
         Therefore, 
         \begin{align*}
             \langle U(X^{s}\cdot,\cdot)+U(\cdot, X^{s}\cdot), \dot{U}_{0}J \rangle & = \frac{1}{2} \bigg[\trace(U_{2}(\dot{U}_{1})_{0}J)+2\trace(U_{2}EJ)-\trace(U_{2}(\dot{U}_{1})_{0}J) \bigg] \\
             & = \trace(U_{2}EJ) = -2|w|^{2}\dot{y} \ .
         \end{align*}
        On the other hand,
        \begin{align*}
             \langle UX^{s}+X^{s}U, \dot{U}_{tr}(\cdot, J\cdot) \rangle &= \frac{1}{4}\bigg[ \frac{1}{2}\trace(\dot{U}_{2})\trace((g_{J}^{-1}(e_{11}^{*}-e_{22}^{*}))^{2})\trace(U_{1}X^{s}+X^{s}U_{1}) + \\
             & \ \ \ -\frac{1}{2}\trace(\dot{U}_{1})\trace((g_{J}^{-1}(e_{11}^{*}+e_{22}^{*}))^{2})\trace(U_{2}X^{s}+X^{s}U_{2}) \bigg] \\
             &= \frac{1}{2}[ \trace(\dot{U}_{2})\trace(U_{1}X^{s}) - \trace(\dot{U}_{1})\trace(U_{2}X^{s})] \\ 
             &=2v(u\dot{x}-v\dot{y}) - 2u(u\dot{y}+v\dot{x}) = -2|w|^{2}\dot{y} \ .
        \end{align*}
     \end{itemize}
     Thus, equation (\ref{lastequation}) is proved and the theorem as well. 
\end{proof}
\subsection{Global action-angle variables}\label{sec:globalDarboux}
Let $(M,\omega)$ be a symplectic manifold and $f\in C^\infty(M,\R)$, then the \emph{Hamiltonian vector field} $\mathbb X_f\in\Gamma(TM)$ associated with $f$ is defined by the following property \begin{equation}\label{hamiltonianfield}\omega(\mathbb X_f,Y)=\mathrm df(Y), \quad\forall Y\in\Gamma(TM)\ .\end{equation}
If $(M,\omega)$ is an Hamiltonian system, namely it is a symplectic manifold endowed with a smooth function $H$, then $f\in C^\infty(M,\R)$ is called an integral of motion if $$\omega(\mathbb X_f,\mathbb X_H)=0 \ .$$ An Hamiltonian system $(M,\omega,H)$ is called \emph{completely integrable} if there exist $n$ integral of motions $f_1=H,\dots,f_n$ such that \newline $(i)$ The differentials $(\mathrm d f_1)_p,\dots,(\mathrm df_n)_p$ are linearly independent for each $p\in M$. \newline $(ii)$ They are pairwise in involution, i.e. $\omega(\mathbb X_{f_i},\mathbb X_{f_j})=0$ for each $i,j=1,\dots,n$. \newline To each completely integrable Hamiltonian system, special coordinates $\{\theta_1,\psi_1,\dots,\theta_n,\psi_n\}$ on $M$ can be found as a consequence of the Arnlod-Liouville Theorem (\cite{arnol2013mathematical}). They form a global Darboux frame for the symplectic form $$\omega=\sum_{i=1}^n\mathrm d\theta_i\wedge\mathrm d\psi_i \ ,$$ but in general, the action variables $\{\psi_1,\dots,\psi_n\}$ are not the given integral of motions. In the following we will show first that our space $\maximalflat$ can be endowed with the above structure and later that, two of the four coordinates given by the Arnlod-Liouville Theorem are actually the integral of motions, and thus have geometric significance. \\ \\ In the following, we will use the existence of the Hamiltonian actions of $S^1$ and $\SL(2,\R)$ on $\maximalflat$ (see Section \ref{sec:circleaction} and Section \ref{sec:SLaction}). 
    \begin{lemma}\label{lem:restrictedhamiltonian}
Let $(M,\omega)$ be a symplectic manifold endowed with a Hamiltonian $G$-action and moment map $\mu_G:M\to\mathfrak g^*$. If $H\le G$ is any closed subgroup, then the restricted $H$-action is Hamiltonian with moment map $\mu_H:M\to\mathfrak h^*$ given by $\mu_H:=|_{\mathfrak h}\circ\mu_G$, where $|_{\mathfrak h}:\mathfrak g^*\to\mathfrak h^*$ is the map which associates to each functional on $\mathfrak g$ its restriction on $\mathfrak h$.
\end{lemma}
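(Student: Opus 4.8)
The plan is to verify directly that the candidate $\mu_H:=|_{\mathfrak h}\circ\mu_G$ satisfies the two defining properties of a moment map in Definition \ref{def:momentmap}. As a preliminary remark, since $H\le G$ is a closed subgroup it is an embedded Lie subgroup, so $\mathfrak h$ is a Lie subalgebra of $\mathfrak g$ and the exponential maps are compatible, $\exp_H(t\xi)=\exp_G(t\xi)$ for $\xi\in\mathfrak h$. In particular the restricted $H$-action is again by symplectomorphisms, because each $\psi_h$ with $h\in H$ already occurs among the symplectomorphisms implementing the $G$-action, whence $\psi_h^*\omega=\omega$.

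For property (ii) the key observation is that the $G$- and $H$-actions share their fundamental vector fields along $\mathfrak h$: for $\xi\in\mathfrak h$ the one-parameter subgroup $t\mapsto\exp(t\xi)$ is the same in $H$ and in $G$, so the generator $X_\xi$ of the $H$-action equals that of the $G$-action. Moreover, unwinding the definition of $\mu_H$, for every $p\in M$ one has $\mu_H^\xi(p)=(\mu_G)_p|_{\mathfrak h}(\xi)=(\mu_G)_p(\xi)=\mu_G^\xi(p)$, since restricting a functional to $\mathfrak h$ and then evaluating at $\xi\in\mathfrak h$ returns the original value. Thus $\mu_H^\xi=\mu_G^\xi$ as smooth functions on $M$, and differentiating yields $\mathrm d\mu_H^\xi=\mathrm d\mu_G^\xi=\iota_{X_\xi}\omega$, which is property (ii) for $\mu_G$.

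The only step demanding genuine care is property (i), the coadjoint equivariance, because passing from $G$ to $H$ replaces the coadjoint action of $G$ on $\mathfrak g^*$ by that of $H$ on $\mathfrak h^*$. Here I would use that the adjoint representation of the subgroup $H$ is the restriction of that of $G$: for $h\in H$ the automorphism $\Ad(h)$ of $\mathfrak g$ preserves the subalgebra $\mathfrak h$ and acts there as the $H$-adjoint action. In particular $\Ad(h^{-1})\xi\in\mathfrak h$ whenever $\xi\in\mathfrak h$. Then, for $h\in H$, $p\in M$ and $\xi\in\mathfrak h$,
\[
(\mu_H)_{h\cdot p}(\xi)=(\mu_G)_{h\cdot p}(\xi)=(\mu_G)_p\big(\Ad(h^{-1})\xi\big)=(\mu_H)_p\big(\Ad(h^{-1})\xi\big)=\big(\Ad^*(h)(\mu_H)_p\big)(\xi),
\]
where the first and third equalities use $\mu_H=\mu_G|_{\mathfrak h}$ evaluated at elements of $\mathfrak h$, the second is the equivariance of $\mu_G$ (valid since $h\in H\le G$), and the last is the definition of the $H$-coadjoint action. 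This is exactly the required $\Ad^*$-equivariance of $\mu_H$ for the $H$-action.

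Having checked both properties, $\mu_H$ is a moment map and the restricted $H$-action is Hamiltonian. I do not anticipate any serious obstacle: the statement is the naturality of moment maps under restriction to a closed subgroup, and the only nontrivial bookkeeping is the compatibility of the exponential and (co)adjoint actions of $H$ and $G$ recorded above, which is what makes both verifications go through essentially by reduction to the corresponding identities for $\mu_G$.
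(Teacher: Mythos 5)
Your proof is correct. Note that the paper itself states this lemma without proof, treating it as a standard fact from symplectic geometry, so there is no internal argument to compare against; your direct verification is exactly the standard one. All three points are handled properly: the restricted action is symplectic because each $\psi_h$, $h\in H$, already appears in the $G$-action; property (ii) reduces to the identity $\mu_H^\xi=\mu_G^\xi$ together with the coincidence of fundamental vector fields, which holds since $H$ closed implies $H$ is an embedded Lie subgroup with $\exp_H(t\xi)=\exp_G(t\xi)$ for $\xi\in\mathfrak h$; and property (i) follows from the chain of equalities you wrote, whose only nontrivial ingredient is that $\Ad(h^{-1})$ preserves $\mathfrak h$ for $h\in H$ and restricts there to the $H$-adjoint action, which you correctly isolate as the key bookkeeping step.
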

Notice that inside $\SL(2,\R)$ there is the subgroup of diagonal matrices with determinant equal to one, namely
\begin{equation}
    \bigg\{\begin{pmatrix}\lambda & 0 \\ 0 & \frac{1}{\lambda} \end{pmatrix} \ \bigg| \ \lambda\in\R^*\bigg\}<\SL(2,\R) \ .
\end{equation}In particular, such a subgroup can be identified with a copy of $\R^*$ which still acts in a Hamiltonian fashion, by the previous lemma, on the space $\maximalflat$. Moreover, from Theorem \ref{thm:flatmaximalandquartic} we know that $\maximalflat$ can be identified with $\Hyp\times\C^*$. Let $z=x+iy$ be a complex coordinate on $\Hyp$ and $w=u+iv$ on $\C$, then the Hamiltonian function of the circular action is given by $H_1(z,w)=\frac{1}{2}f\big(y^4|w|^2\big)$. \begin{lemma}
    Let $\R^*$ be a copy of the subgroup of diagonal matrices in $\SL(2,\R)$ and consider its restricted Hamiltonian action on $\maximalflat$, then the Hamiltonian function is given by $$H_2(z,w)=2\frac{x}{y}\big(1-f(y^4|w|^2)\big) \ .$$
\end{lemma}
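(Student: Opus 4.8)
The plan is to reduce everything to Proposition~\ref{prop:momentmapSL2R} together with the restriction principle of Lemma~\ref{lem:restrictedhamiltonian}, so that the only genuine content is a coordinate computation of a single trace. First I would identify the Lie algebra $\mathfrak h$ of the diagonal subgroup $\R^* < \SL(2,\R)$: differentiating the one-parameter path $t \mapsto \diag(e^{t}, e^{-t})$ at $t=0$ shows that $\mathfrak h$ is spanned by the generator $\xi_2 = \begin{pmatrix} 1 & 0 \\ 0 & -1 \end{pmatrix}$ already appearing in the proof of Proposition~\ref{prop:momentmapSL2R}. Thus the restricted $\R^*$-action is generated infinitesimally by $\xi_2$.

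By Lemma~\ref{lem:restrictedhamiltonian}, the restricted action is Hamiltonian with moment map $\mu_{\R^*} = |_{\mathfrak h}\circ\mu_{\SL(2,\R)}$, and the Hamiltonian function of the $\R^*$-action is the component of the moment map along the generator, namely
$$H_2(J,U) = \mu^{\xi_2}(J,U) = \Big(1 - f\big(\tfrac{\|U\|^2}{2}\big)\Big)\trace(J\xi_2),$$
where the explicit expression for $\mu$ is the one furnished by Proposition~\ref{prop:momentmapSL2R}.

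It then remains to evaluate this in the coordinates $(z,w) = (x+iy,\,u+iv)$ on $\maximalflat \cong \Hyp\times\C^*$. Using $J = j(z)$ with $j$ the K\"ahler isometry of Lemma~\ref{lem:Trautweinkahlerhyperbolic}, I would compute
$$j(z)\xi_2 = \begin{pmatrix} \frac{x}{y} & \frac{x^2+y^2}{y} \\ \frac{1}{y} & \frac{x}{y} \end{pmatrix}, \qquad \trace(j(z)\xi_2) = \frac{2x}{y}.$$
Combining this with the coordinate identity $\tfrac{1}{2}\|U_{(z,w)}\|^2_{j(z)} = y^4|w|^2$ recorded in Section~\ref{sec:coordinatedescription}, I obtain
$$H_2(z,w) = \frac{2x}{y}\big(1 - f(y^4|w|^2)\big),$$
which is exactly the claimed formula.

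Since each step is a direct consequence of already-established results, there is no substantial obstacle here; the only point requiring minor care is the normalization of the generator, i.e.\ checking that the identification $\R^*\ni\lambda\mapsto\diag(\lambda,1/\lambda)$ produces precisely $\xi_2$ (and not a rescaling of it), so that the Hamiltonian is $\mu^{\xi_2}$ with no spurious constant. I would also remark that $\R^*$ is disconnected, so that strictly speaking the action is generated by the identity component $\{\diag(e^{t},e^{-t})\}$; this does not affect the Hamiltonian, which depends only on $\mathfrak h = \R\,\xi_2$.
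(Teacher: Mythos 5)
Your proposal is correct and follows essentially the same route as the paper: restrict the $\SL(2,\R)$ moment map of Proposition \ref{prop:momentmapSL2R} via Lemma \ref{lem:restrictedhamiltonian} to the diagonal subalgebra, take the generator $\xi_2=\begin{pmatrix}1 & 0\\ 0 & -1\end{pmatrix}$, compute $\trace(j(z)\xi_2)=2x/y$, and invoke the coordinate identity $\tfrac{1}{2}\|U_{(z,w)}\|^2_{j(z)}=y^4|w|^2$. The only cosmetic difference is that the paper also records the infinitesimal generator $V_\xi$ in coordinates, which your argument correctly treats as unnecessary since Lemma \ref{lem:restrictedhamiltonian} already guarantees the moment map property.
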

\begin{proof}
The Lie algebra of $\R^*$ can be identified with $$\mathfrak h:=\bigg\{\begin{pmatrix}\alpha & 0 \\ 0 & -\alpha 
\end{pmatrix} \ \bigg| \ \alpha\in\R\bigg\} \ .$$  If $\mu$ denotes the moment map for the $\SL(2,\R)$-action of Proposition \ref{prop:momentmapSL2R}, then the associated moment map for the restricted $\R^*$-action $\mu_\mathfrak h$ is $$\mu^X_\mathfrak h(z,w) =\big(1-f(y^4|w|^2)\big)\tr(j(z)X),$$ where $X\in\mathfrak h$. Let $\xi:=\begin{pmatrix}1 & 0 \\ 0 & -1
\end{pmatrix}\in\mathfrak h$, then the Hamiltonian function $H_2:\maximalflat\to\R$ is $H_2(z,w):=\mu^\xi_\mathfrak h(z,w)$, given that $\mathrm d\mu^\xi_\mathfrak h=\ome_f(V_\xi,\cdot)$, where $$ V_\xi=2\bigg(x\frac{\partial}{\partial x}+y\frac{\partial}{\partial y}\bigg)-4\bigg(u\frac{\partial}{\partial u}+v\frac{\partial}{\partial v}\bigg)$$ is the infinitesimal generator of the action. Finally, since $$\tr(j(z)\xi)=\tr\bigg(\begin{pmatrix}\frac{x}{y} & -\frac{x^2+y^2}{y} \\ \frac 1{y} & -\frac x{y}
\end{pmatrix}\cdot\begin{pmatrix} 1 & 0 \\ 0 & -1
\end{pmatrix}\bigg)=2\frac{x}{y}$$ we get $\displaystyle H_2(z,w)=2\frac{x}{y}\big(1-f(y^4|w|^2)\big)$. 
\end{proof}
\begin{proposition}\label{prop:deftintegrablesystem}
The Hamiltonian system $(\maximalflat,\ome_f,H_1)$ is completely integrable. The integral of motions are given by $$H_1(z,w)=\frac{1}{2}f\big(y^4|w|^2\big),\quad H_2(z,w)=2\frac{x}{y}\big(1-f(y^4|w|^2)\big) \ .$$
\end{proposition}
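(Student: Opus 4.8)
The plan is to verify directly, for the pair $(H_1,H_2)$ on the four-real-dimensional manifold $\maximalflat\cong\Hyp\times\C^*$ (so that $n=2$), the two defining conditions of complete integrability recalled just before the statement: pointwise linear independence of the differentials and pairwise involution. Since $H_1$ is by hypothesis the Hamiltonian of the system, only the second function $H_2$ needs to be checked against both conditions, and $\ome_f(\mathbb X_{H_1},\mathbb X_{H_1})=0$ holds trivially by antisymmetry.

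First I would establish involution, namely $\ome_f(\mathbb X_{H_1},\mathbb X_{H_2})=0$. Rather than computing the two Hamiltonian vector fields, I would use that this Poisson bracket equals $-\mathbb X_{H_1}(H_2)$, the derivative of $H_2$ along the Hamiltonian flow of $H_1$. By the circle-action theorem of Section \ref{sec:circleaction}, $\mathbb X_{H_1}$ generates the action $\Psi_\theta(z,w)=(z,e^{-i\theta}w)$, and since $H_2(z,w)=2\tfrac{x}{y}\big(1-f(y^4|w|^2)\big)$ depends on $w$ only through $|w|^2$, it is manifestly $\Psi_\theta$-invariant. Hence $\mathbb X_{H_1}(H_2)=0$ and the functions Poisson-commute. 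Equivalently, one notes that $y^4|w|^2$, the squared norm $\|q\|^2$ of the quartic differential, is preserved by both the $S^1$- and the $\R^*$-action, so that $H_1$ is constant along the flow of $H_2$.

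Next I would check that $\mathrm dH_1$ and $\mathrm dH_2$ are linearly independent at every point. Writing $t:=y^4|w|^2$, differentiation gives $\mathrm dH_1=\tfrac12 f'(t)\,\mathrm dt$ with $\mathrm dt=4y^3|w|^2\,\mathrm dy+2y^4\big(u\,\du+v\,\devu\big)$; in particular $\mathrm dH_1$ has no $\mathrm dx$-component. Since $t$ is independent of $x$, the $\mathrm dx$-component of $\mathrm dH_2$ is $\tfrac{2}{y}\big(1-f(t)\big)$, which is strictly positive because $f\le 0$ and $y>0$. Thus in any relation $\alpha\,\mathrm dH_1+\beta\,\mathrm dH_2=0$ the $\mathrm dx$-component forces $\beta=0$, and then $\alpha\,\mathrm dH_1=0$ forces $\alpha=0$ as well: on $\maximalflat$ one has $w\neq 0$, hence $|w|^2>0$, while $f'(t)<0$, so the $\mathrm dy$-coefficient $2f'(t)y^3|w|^2$ of $\mathrm dH_1$ is nonzero.

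The hard part, and the reason the statement is phrased on $\maximalflat$ rather than on all of $\maximal$, is precisely this independence: exactly along the zero section $w=0$ one has $t=0$ and every coefficient of $\mathrm dt$ vanishes, so $\mathrm dH_1=0$ and the argument collapses there. Removing the zero section — which corresponds, via Theorem \ref{thm:flatmaximalandquartic} and Proposition \ref{prop:isomorphismmaximalandquartic}, to imposing $q\neq 0$ — is therefore the natural domain on which the two conditions hold simultaneously. Having verified conditions $(i)$ and $(ii)$ with $n=2$, I would conclude that $(\maximalflat,\ome_f,H_1)$ is a completely integrable system with $H_1,H_2$ as integrals of motion.
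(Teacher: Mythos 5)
Your proof is correct, but it takes a genuinely different route from the paper's on the key step. The paper writes down the two Hamiltonian vector fields explicitly, $\Xuno=u\partialv-v\partialu$ and $\Xdue=2\big(x\partialx+y\partialy\big)-4\big(u\partialu+v\partialv\big)$, observes that these are plainly pointwise independent on $\maximalflat$, and then verifies involution by expanding $\ome_f(\Xuno,\Xdue)$ term by term against the coordinate expression of $\ome_f$ from Section \ref{sec:coordinatedescription}. You instead obtain involution from symmetry: $\mathbb{X}_{H_1}$ coincides with the generator of the circle action (by the theorem of Section \ref{sec:circleaction} together with non-degeneracy of $\ome_f$, Proposition \ref{prop:Gnondegenereomegachiusa}), and $H_2$ depends on $w$ only through $|w|^2$, hence is invariant under the rotation flow, so the bracket $-\mathbb{X}_{H_1}(H_2)$ vanishes. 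This is cleaner: it requires no explicit formula for $\ome_f$, works uniformly in $f$, and is insensitive to sign and orientation conventions --- note in fact that, because the fibre identification of Lemma \ref{lem:Trautweinfibremap} is complex \emph{anti}-linear, the circle action in the $(z,w)$ coordinates is $w\mapsto e^{i\theta}w$ rather than $e^{-i\theta}w$ as you wrote, a discrepancy your argument never feels since only $|w|$ enters $H_2$. Your independence check is carried out on the differentials (as the definition literally demands) rather than on the vector fields as in the paper; the two are equivalent by non-degeneracy of $\ome_f$, and your remark that independence fails exactly along the zero section correctly explains why the statement is posed on $\maximalflat$ rather than on all of $\maximal$. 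What the paper's computational route buys in exchange is the explicit formulas for $\Xuno$ and $\Xdue$ themselves, which are reused immediately afterwards: their integral curves are needed to prove completeness of the Lagrangian fibration in the final theorem of Section \ref{sec:globalDarboux}.
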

\begin{proof}
Let $\Xuno,\Xdue$ be the Hamiltonian vector fields associated with $H_1,H_2$. An explicit expression is given by $$\Xuno=u\frac{\partial}{\partial v}-v\frac{\partial}{\partial u},\quad \Xdue=2\bigg(x\frac{\partial}{\partial x}+y\frac{\partial}{\partial y}\bigg)-4\bigg(u\frac{\partial}{\partial u}+v\frac{\partial}{\partial v}\bigg) \ .$$ It is clear that they are point-wise linearly independent on $\maximalflat$, hence to end the proof we only need to show that they are involutive. The symplectic form is \begin{align*}(\ome_f)_{(z,w)}=&\bigg(-1+f-4f'y^4(u^2+v^2)\bigg)\frac{\dx\wedge\dy}{y^2}-f'y^4\du\wedge\devu \\& \ \ \ \ -2y^3f'\bigg(u(\dx\wedge\du+\dy\wedge\devu)+v(\du\wedge\dy-\devu\wedge\dx)\bigg) \ .\end{align*}Then, \begin{align*}
    \ome_f(\Xuno,\Xdue)=2&
    \bigg(ux\ome_f\bigg(\partialv,\partialx\bigg)+uy\ome_f\bigg(\partialv,\partialy\bigg)-vx\ome_f\bigg(\partialu,\partialx\bigg)- \\ &vy\ome_f\bigg(\partialu,\partialy\bigg)\bigg)-4\bigg(u^2\ome_f\bigg(\partialv,\partialu\bigg)-v^2\ome_f\bigg(\partialu,\partialv\bigg)\bigg) \ .
\end{align*}By using the explicit expression of the symplectic form, it is easy to see that the right hand side of the above equality is zero.
\end{proof}
The map $\pi:=(H_1,H_2):\maximalflat\to B$ is a Lagrangian fibration over the open subset $B:=\pi\big(\maximalflat\big)\subset\R^2$, which is actually contained in the set of regular values of $\pi$. In particular, by using the explicit description of the Hamiltonian functions, it is not difficult to see that $B$ is diffeomorphic to $\{(u_1,u_2)\in \R^2 \ | \ u_1<0\}$, hence it is contractible. The fibre over a point $b\in B$ is $$\pi^{-1}(b)=\big\{(z,w)\in\maximalflat \ | \ \frac{1}{2}f\big(y^4|w|^2\big)=b_1, \ 2\frac{x}{y}\big(1-f(y^4|w|^2)\big)=b_2\big\} \ ,$$ which is a copy of $\R\times S^1$. Let us denote with $\{\theta_1,\theta_2\}$ the angle coordinates given by the Arnold-Liouville theorem applied to $\big(\maximalflat,\ome_f, H_1, H_2\big)$, then we can prove the following
\begin{theorem}
The coordinates $\{\theta_1,H_1,\theta_2,H_2\}$ are a global Darboux frame for $\ome_f$, or in other words $$\ome_f=\mathrm d\theta_1\wedge\mathrm d H_1+\mathrm d\theta_2\wedge\mathrm dH_2 \ .$$
\end{theorem}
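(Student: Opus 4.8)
The plan is to build, on each regular fibre, angle coordinates dual to $H_1$ and $H_2$ by integrating the Hamiltonian flows, and then to reduce the Darboux identity—via the defining property of Hamiltonian vector fields together with the closedness of $\ome_f$—to the vanishing of a single cross-term $\mathrm{d}H_1\wedge\mathrm{d}H_2$, which a short coordinate computation removes.

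First I would use the structure recorded in Proposition \ref{prop:deftintegrablesystem}: the fields $\Xuno=\mathbb{X}_{H_1}$ and $\Xdue=\mathbb{X}_{H_2}$ are complete, pointwise independent, and satisfy $\ome_f(\Xuno,\Xdue)=0$. Hence their flows define a fibrewise simply transitive action of $S^1\times\R$ on the regular fibres $\pi^{-1}(b)\cong S^1\times\R$ of $\pi=(H_1,H_2)$, the $S^1$-factor being the circle action of Section \ref{sec:circleaction}. Choosing a smooth section of $\pi$ over the contractible base $B$ and flowing from it, I obtain fibre coordinates $\theta_1,\theta_2$—with $\theta_1$ of period $2\pi$ and $\theta_2$ real—whose coordinate vector fields in the chart $(H_1,H_2,\theta_1,\theta_2)$ are precisely $\partial_{\theta_1}=\Xuno$ and $\partial_{\theta_2}=\Xdue$.

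Next I would evaluate $\ome_f$ in this chart. The identity $\iota_{\mathbb{X}_{H_i}}\ome_f=\mathrm{d}H_i$ fixes all pairings of $\partial_{\theta_1},\partial_{\theta_2}$ against the coordinate frame, while $\ome_f(\partial_{\theta_1},\partial_{\theta_2})=\ome_f(\Xuno,\Xdue)=0$; together they force
\[
\ome_f=\mathrm{d}\theta_1\wedge\mathrm{d}H_1+\mathrm{d}\theta_2\wedge\mathrm{d}H_2+a\,\mathrm{d}H_1\wedge\mathrm{d}H_2,
\]
the sole free coefficient being $a=\ome_f(\partial_{H_1},\partial_{H_2})$. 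Imposing $\mathrm{d}\ome_f=0$ then yields $\mathrm{d}a\wedge\mathrm{d}H_1\wedge\mathrm{d}H_2=0$, so $a=a(H_1,H_2)$ is a pullback from $B$.

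It remains to kill this term, and this is where the main work lies. Since $a$ descends to the contractible base, I can solve $\partial_{H_1}g=a$ and replace $\theta_2$ by $\theta_2+g(H_1,H_2)$: this shift is constant along fibres, so it preserves both $\partial_{\theta_i}=\mathbb{X}_{H_i}$ and the period of $\theta_1$, and it cancels the cross-term, giving $\ome_f=\mathrm{d}\theta_1\wedge\mathrm{d}H_1+\mathrm{d}\theta_2\wedge\mathrm{d}H_2$; the resulting coordinates are exactly the Arnold--Liouville angles conjugate to the actions $H_1,H_2$. Concretely, taking $\theta_1=\arg w$ and $\theta_2=\tfrac12\log y$ in the coordinates of Section \ref{sec:coordinatedescription} and expanding the explicit formula for $\ome_f$, I expect to find $a=\bigl(1-f(y^4|w|^2)\bigr)^{-1}=(1-2H_1)^{-1}$, so that the corrected angle is $\theta_2=\tfrac12\log\!\bigl(y/(1-f)\bigr)$. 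The genuine obstacle is thus not conceptual but computational: carrying out this expansion cleanly and confirming that $a$ depends on $H_1$ alone, which is precisely what certifies that the integrals of motion $H_1,H_2$ themselves serve as action variables.
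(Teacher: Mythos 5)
Your proposal is correct, and it is self-contained where the paper's proof is not. The paper argues by reduction to external machinery: it notes that the fibres of $\pi=(H_1,H_2)$ are Lagrangian and diffeomorphic to $S^1\times\R$, that $\Xuno,\Xdue$ are complete, hence that $\pi$ is a complete Lagrangian fibration over the contractible base $B$, and it then imports the existence of a global \emph{Lagrangian} section from \cite[Corollary 3.23]{rungi2022global}, concluding by the strategy of \cite[Theorem A]{rungi2022global} (flowing from a Lagrangian section makes all cross terms vanish at once). You instead flow from an \emph{arbitrary} smooth section, derive the normal form $\ome_f=\mathrm{d}\theta_1\wedge\mathrm{d}H_1+\mathrm{d}\theta_2\wedge\mathrm{d}H_2+a\,\mathrm{d}H_1\wedge\mathrm{d}H_2$ from $\iota_{\Xuno}\ome_f=\mathrm{d}H_1$, $\iota_{\Xdue}\ome_f=\mathrm{d}H_2$ and $\ome_f(\Xuno,\Xdue)=0$, push $a$ down to $B$ by closedness, and integrate $\partial_{H_1}g=a$ to shift $\theta_2$; this shift is precisely a hands-on construction of the Lagrangian section that the paper obtains by citation. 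Your explicit computation also checks out: with $\theta_1=\arg w$ and $\theta_2=\tfrac12\log y$ one indeed has $\partial_{\theta_1}=\Xuno$, $\partial_{\theta_2}=\Xdue$, and evaluating the coordinate expression of $\ome_f$ gives $a=\ome_f(\partial_{H_1},\partial_{H_2})=(1-f)^{-1}=(1-2H_1)^{-1}$, so the corrected angle $\tfrac12\log\bigl(y/(1-f)\bigr)$ works as predicted. Two small points would make your argument airtight: first, you implicitly use $[\Xuno,\Xdue]=0$ (which follows from involutivity) so that the two flows define a genuine $S^1\times\R$-action on the fibres; second, solving $\partial_{H_1}g=a$ smoothly requires slightly more than contractibility of $B$ --- one needs the $H_1$-slices of $B$ to be intervals --- which holds here because $B$ is a product of an interval with $\R$, and this is exactly the point where the general theory invoked by the paper does nontrivial work. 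In exchange, your route is elementary, avoids the external reference, and produces closed-form action-angle variables, while the paper's route is shorter on the page and applies in situations where no explicit section or coordinates are available.
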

\begin{proof}
The fibre $\pi^{-1}(b)$ over a point $b\in B$ is a Lagrangian submanifold of $(\maximalflat,\ome_f)$ diffeomorphic to $S^1\times\R$. Moreover, the Hamiltonian vector fields $\Xuno,\Xdue$ are complete on $\pi^{-1}(b)$, indeed $\Xuno$ is the generator of the counter clock-wise rotation in the plane and the integral curve of $\Xdue$ passing through the point $(z,w)$ is $\gamma_{(z,w)}(t)=(e^{2t}z,e^{-3t}w)$, which is defined for all $t\in\R$. This implies that $\pi:\maximalflat\to B$ is a complete Lagrangian fibration (\cite[Definition 3.7]{rungi2022global}) which fibres over a contractible open subset of $\R^2$. By \cite[Corollary 3.23]{rungi2022global} there exists a global Lagrangian section $\sigma:B\to\maximalflat$ which allows us to conclude the proof following the same strategy as \cite[Theorem A]{rungi2022global}.
\end{proof}

\printbibliography

@article{mazzoli2021parahyperkahler,
shorthand={MST21},
      title={Para-hyperK\"ahler geometry of the deformation space of maximal globally hyperbolic anti-de Sitter three-manifolds}, 
      author={Filippo Mazzoli and Andrea Seppi and Andrea Tamburelli},
      year={2021},
      journal={arXiv:2107.10363}
}

@phdthesis{trautwein2018infinite,
%shorthand={T18},
  title={Infinite dimensional GIT and moment maps in differential geometry},
  author={Trautwein, Samuel},
  year={2018},
  school={ETH Zurich},
}

@article{rungi2021pseudo,
  title={Pseudo-K{\"a}hler geometry of properly convex projective structures on the torus},
  author={Rungi, Nicholas and Tamburelli, Andrea},
  journal={arXiv:2112.08979},
  year={2021}
}

@book{arnol2013mathematical,
  title={Mathematical methods of classical mechanics},
  author={Arnol'd, Vladimir Igorevich},
  volume={60},
  year={2013},
  publisher={Springer Science \& Business Media}
}

@article{goldman1984symplectic,
  title={The symplectic nature of fundamental groups of surfaces},
  author={Goldman, William M},
  journal={Advances in Mathematics},
  volume={54},
  number={2},
  pages={200--225},
  year={1984},
  publisher={Academic Press}
}

@article{rungi2022global,
  title={Global Darboux coordinates for complete Lagrangian fibrations and an application to the deformation space of $\mathbb R\mathbb P^2$-structures in genus one},
  author={Rungi, Nicholas and Tamburelli, Andrea},
  journal={arXiv:2208.05336},
  year={2022}
}

@article{labourie2017cyclic,
  title={Cyclic surfaces and Hitchin components in rank 2},
  author={Labourie, Fran{\c{c}}ois},
  journal={Annals of Mathematics},
  volume={185},
  number={1},
  pages={1--58},
  year={2017},
  publisher={Department of Mathematics of Princeton University}
}

@article{labourie2023quasicircles,
  title={Quasicircles and quasiperiodic surfaces in pseudo-hyperbolic spaces},
  author={Labourie, Fran{\c{c}}ois and Toulisse, J{\'e}r{\'e}my},
  journal={Inventiones mathematicae},
  pages={1--88},
  year={2023},
  publisher={Springer}
}

@article{Nie_alternating,
  title={Cyclic {H}iggs bundles and minimal surfaces in pseudo-hyperbolic spaces},
  author={Nie, Xin},
  journal={arXiv:2206.13357},
  year={2022}
}

@article {Tambu_poly,
    AUTHOR = {Tamburelli, Andrea},
     TITLE = {Polynomial quadratic differentials on the complex plane and light-like polygons in the {E}instein universe},
   JOURNAL = {Adv. Math.},
  FJOURNAL = {Advances in Mathematics},
    VOLUME = {352},
      YEAR = {2019},
     PAGES = {483--515},
}

@article{BS_maximal, 
    author={Bonsante, Francesco and Schlenker, Jean-Marc},
    title={Maximal surfaces and the universal {T}eichm\"uller space},
    Journal={Invent. Math.},
    volume={182},
    pages={279--333},
    year={2010},
}

@article{TW,
	Author={Tamburelli, Andrea and Wolf, Michael},
	Title={Planar minimal surfaces with polynomial growth in the {S}p(4,{R})-symmetric space},
	JOURNAL={Amer. J. Math. (to appear)},
	YEAR={2020},
}

@article{LTW,
  title={Plateau problems for maximal surfaces in pseudo-hyperbolic spaces },
  author={Labourie, Fran{\c{c}}ois and Toulisse, J\`er\`emy and Wolf, Michael},
  journal={Ann. Sci. \'Ec. Norm. Sup\'er.  (to appear)},
  year={2023}
}

@article {CTT,
    AUTHOR = {Collier, Brian and Tholozan, Nicolas and Toulisse,
              J\'{e}r\'{e}my},
     TITLE = {The geometry of maximal representations of surface groups into
              {${\rm SO}_0(2,n)$}},
   JOURNAL = {Duke Math. J.},
  FJOURNAL = {Duke Mathematical Journal},
    VOLUME = {168},
      YEAR = {2019},
    NUMBER = {15},
     PAGES = {2873--2949},
      ISSN = {0012-7094,1547-7398},
   MRCLASS = {20H10 (14H60 22E40 53C50 57M50 58E12)},
  MRNUMBER = {4017517},
MRREVIEWER = {John\ Huerta},
       DOI = {10.1215/00127094-2019-0052},
       URL = {https://doi.org/10.1215/00127094-2019-0052},
}

@article{rungi2023mathbb,
  title={The $\mathbb{P}\mathrm{SL}(3,\mathbb{R})$-Hitchin component as an infinite-dimensional pseudo-K\"ahler reduction},
  author={Rungi, Nicholas and Tamburelli, Andrea},
  journal={arXiv preprint arXiv:2306.02699},
  year={2023}
}

@article{alessandrini2019geometry,
  title={The geometry of maximal components of the $\mathbb{P}\mathrm{Sp}(4,\mathbb{R})$ character variety},
  author={Alessandrini, Daniele and Collier, Brian},
  journal={Geometry \& Topology},
  volume={23},
  number={3},
  pages={1251--1337},
  year={2019},
  publisher={Mathematical Sciences Publishers}
}

@article{bonsante2020anti,
  title={Anti-de Sitter geometry and Teichm{\"u}ller theory},
  author={Bonsante, Francesco and Seppi, Andrea},
  journal={In the tradition of Thurston: Geometry and topology},
  pages={545--643},
  year={2020},
  publisher={Springer}
}

\end{document}